\documentclass[10pt]{amsart}
\usepackage{amsmath,amssymb,verbatim}
\usepackage[utf8]{inputenc}
\usepackage{enumerate}
\usepackage{pdflscape}
\usepackage{amsthm}
\usepackage{mathrsfs}
\usepackage{color}
\usepackage{multicol}
\usepackage[normalem]{ulem}
\usepackage{cancel}
\usepackage{tikz}
\usetikzlibrary{matrix}
\usepackage[all]{xy}
\usepackage{mathtools}
\usepackage{arydshln}
\usepackage[shortlabels]{enumitem}
 \usepackage[english]{babel}
\usepackage{hyperref}
\hypersetup{
    colorlinks = true,
    linkbordercolor = {white},
    allcolors = blue
} 
\usepackage[capitalize, noabbrev]{cleveref}
 
\makeatletter
\@namedef{subjclassname@1991}{\textup{2020} Mathematics Subject Classification}
\makeatother

\topmargin=-.15in %Negative number is measured from the default position
\textheight=9.0in %Top-to-bottom length of printed area
\topskip=\baselineskip \flushbottom %\headsep=0in
\textwidth=6.5in %Width of printed area %\textwidth=5.6in %for better 
%screen display
\evensidemargin=0in %0in from default position.
\oddsidemargin=0in

\newtheorem{theorem}{Theorem}[section]
\newtheorem{lemma}[theorem]{Lemma}

\newtheorem{proposition}[theorem]{Proposition}

\newtheorem{corollary}[theorem]{Corollary}
\newtheorem{remark}[theorem]{Remark}

\newtheorem*{theorem*}{Theorem}

\newtheorem{maintheorem}{Theorem}
\newtheorem{maincorollary}[maintheorem]{Corollary}

\makeatletter
\@addtoreset{equation}{section}
\makeatother

%\newenvironment{proof}{\par\noindent{\bf Proof.}}{$\qed$\par\bigskip}

%comandos nuevos

\newcommand{\M}{{\mathrm{D}}}

\newcommand{\bb}{{\overline b}}

\newcommand{\Cen}{{\rm C}}

\newcommand{\Aut}{\mbox{\rm Aut}}

\newcommand{\inv}{\textup{inv}}

\newcommand{\Z}{{\mathbb Z}}

\newcommand{\F}{{\mathbb F}}

\newcommand{\G}{\mathcal{G}}
\newcommand{\B}{\mathcal{B}}

\newcommand{\Soc}{{\rm Soc}}

\newcommand{\GEN}[1]{\left\langle #1 \right\rangle}

\newcommand{\ov}[1]{\overline{#1}}
\newcommand{\aug}[1]{\mathrm{I}(#1)}
\newcommand{\augNor}[2]{\mathrm{I}(#1;#2)}

\newcommand{\ZZ}{\mathrm{Z}}

\newcommand{\E}[1]{\left\lceil #1 \right\rceil}

\newcommand{\ord}{\text{o}}
\newcommand{\lex}{\text{lex}}

\newcommand{\Ese}[2]{\mathcal{S}\left(#1\mid #2\right)}
\newcommand{\Te}[2]{\mathcal{T}\left(#1\mid #2\right)}

\newcommand{\qand}{\quad \text{and} \quad}

\DeclareMathOperator{\Cyc}{C}

\DeclareMathOperator{\OO}{O}
\DeclareMathOperator{\dG}{d}

%\title[On the Modular Isomorphism Problem for  two-generated cyclic-by-abelian groups]{On the Modular Isomorphism Problem for  two-generated cyclic-by-abelian groups}

\title[On group invariants determined by modular group algebras]{On group invariants determined by modular group algebras: even versus odd characteristic}

\author{Diego Garc\'{\i}a-Lucas, \'{A}ngel del R\'{i}o, Mima Stanojkovski}
\thanks{The first two authors   are  partially supported by Grant PID2020-113206GB-I00 funded by MCIN/AEI/10.13039/501100011033.
The third author is supported by the Deutsche Forschungsgemeinschaft (DFG, German Research Foundation) -- Project-ID 286237555 -- TRR 195.}

\keywords{Finite $p$-groups, modular group algebra, invariants, Modular Isomorphism Problem.}

\subjclass{20D15}

\date{\today}

\begin{document}

\begin{abstract}
Let $p$ be a an odd prime and let $G$ be a finite $p$-group with cyclic commutator subgroup $G'$. We prove that 
the exponent and the abelianization of the centralizer of $G'$ in $G$ are determined by the group algebra of $G$ over any field of characteristic $p$. If, additionally, $G$ is $2$-generated then almost all the numerical invariants determining $G$ up to isomorphism are determined by the same group algebras; as a consequence the isomorphism type of the centralizer of $G'$ is determined. These claims are known to be false for $p=2$. 
\end{abstract}

\maketitle 

%\tableofcontents

\section{Introduction}

Let $G$ be a group, $R$ a commutative ring and let $RG$ denote the group  ring of $G$ with coefficients in $R$. The problem of describing how much information about the group $G$ is carried by the group algebra $RG$ has a long tradition in mathematics, with applications in particular to the representation of groups and in general to group theory; cf.\ \cite{Hig40,PW50,Bra63,Passman65,
Sehgal1967,Whitcomb,Pas77,Seh78,CurtisReiner1981,
Sandling85,CurtisReiner1987,BKRW99}.
The last question can be rewritten more compactly as: 
\begin{quote}
Which group invariants of $G$ are algebra invariants of $RG$?
\end{quote}
By a \emph{group invariant} of $G$ we understand a feature of $G$ that
is shared with any group isomorphic to $G$ while an \emph{algebra invariant} is a feature that is shared with any group $H$ with the property that $RG$ and $RH$ are isomorphic as $R$-algebras.
For instance, the cardinality of $G$ can be expressed as the $R$-rank of $RG$ and is thus an algebra invariant of $RG$. Moreover, the group $G$ is abelian if and only if $RG$ is a commutative ring, i.e.\ the property of being abelian is an algebra invariant of $RG$. 
The ultimate version of the above question is the \emph{Isomorphism Problem} which asks for the determination of pairs $(G,R)$ for which the isomorphism type of $G$ is an algebra invariant of $RG$:
%The ultimate version of this question is whether the isomorphism type of $G$ is determined by the algebra isomorphism type  of $RG$, formally:
\begin{quote}
	\textbf{Isomorphism Problem for group algebras}: 
	Given a commutative ring $R$ and two groups $G$ and $H$,  does  $RG$ and $RH$ being isomorphic as $R$-algebras imply that the groups $G$ and $H$ are isomorphic? In symbols,
	\[
 RG \cong RH\ \Longrightarrow\ G\cong H\ ?	
	\]
\end{quote}
The answer to this question is a function of the ring $R$: for instance, it is easily shown that any two non-isomorphic finite abelian groups of the same order have isomorphic group rings with complex coefficients. However, by a seminal result of G. Higman \cite{Hig40}, if $G$ and $H$ are non-isomorphic abelian groups then $\Z G$ and $\Z H$ are not isomorphic. More surprisingly, there even exist two non-isomorphic finite metabelian groups $G$ and $H$ such that $kG$ and $kH$ are isomorphic for every field $k$ \cite{Dade71}. Nonetheless, the Isomorphism Problem has a positive solution for $R=\Z$ and $G$ and $H$ metabelian \cite{Whitcomb}. This extends Higman's result for abelian groups \cite{Hig40} and has been followed by positive results for more families of groups, such as nilpotent groups \cite{RoggenkampScott1987} and supersolvable groups \cite{KimmerleHabil}. These early results yielded to strong expectations that the Isomorphism Problem for integral group rings ($R=\Z$ and $G$, $H$ finite) would have a positive solution until Hertweck's construction of two non-isomorphic finite groups with isomorphic integral group rings \cite{Hertweck2001}. Among the classical variations of the Isomorphism Problem, the one that remained unanswered the longest deals with the case where  $R$  is a field of positive characteristic $p$ and $G$ and $H$ are $p$-groups, formally:
\begin{quote}
	\textbf{Modular Isomorphism Problem}: Given a field $k$ of characteristic $p>0$ and two finite $p$-groups $G$ and $H$, are $kG$ and $kH$ isomorphic as $k$-algebras if and only if $G$ and $H$ are isomorphic as groups?
\end{quote}
 The contributions to this problem are numerous, including positive solutions for specific families of $p$-groups and the uncovering of algebra invariants in this context; cf.\ 
\cite{Jen41,Deskins1956,Ward,Qui68,Makasikis,BaginskiMetacyclic,BC88,
San84Ab,San89,Wursthorn1993,SalimSandling1995,SalimSandlingp5,San96,Bag99,
HS06,BK07,HertweckSoriano07,Eick08,EK11,NavarroSambale,
Sakurai,MM20,BdR20,MSS21,MS22}. The first negative solution to the Modular Isomorphism Problem was given recently in the form of a series of pairs of non isomorphic $2$-groups $G_{m,n}$ and $H_{m,n}$  which are $2$-generated and have cyclic commutator subgroup satisfying $kG_{m,n}\cong kH_{m,n}$ for every $n>m>2$ and every field $k$ of characteristic $2$ \cite{GarciaMargolisdelRio}. However, if $p$ is odd then the Modular Isomorphism Problem is still open, even in the class of $2$-generated groups with cyclic commutator subgroup. 
The aim of this paper is to investigate this class of groups from the point of view of algebra invariants and to %prove that the key factors yielding to the counterexamples in characteristic 2 do not  occur  in odd characteristic. 
%Our results show  a great 
demonstrate a substantial difference between the cases $p=2$ and $p>2$ within this class.
For example, if $G'$ denotes the commutator subgroup of $G$, our first result shows that both the exponent of $\Cen_G(G')$ and the isomorphism types of $\Cen_G(G')/G'$ and $\Cen_G(G')/\Cen_G(G')'$ are  algebra invariants of $kG$ provided $G'$ is cyclic and $p$ is odd; cf.\ \cref{ExponentCGG'}.
Note that, on the contrary, for every choice of $n>m>2$,
the groups $G_{m,n}$ and $H_{m,n}$ satisfy neither of the points $(1)$-$(2)$-$(3)$ from \cref{ExponentCGG'}. 
%which does not hold for $p=2$, because $\Cen_{G_{m,n}}(G_{m,n}')$ and $\Cen_{H_{m,n}}(H_{m,n}')$ have different exponent, abelianization   and quotient modulo the derived subgroup of the original group. Formally, our  first  main result is the following: 

\begin{maintheorem}\label{ExponentCGG'}
Let $k$ be a field of odd characteristic $p$ and let $G$ and $H$ be finite $p$-groups. If $G'$ is cyclic and $kG\cong kH$ then the following hold: 
\begin{enumerate}[label=$(\arabic*)$]
		\item $\Cen_G(G')$ and $\Cen_H(H')$ have the same exponent. 
		\item $\Cen_G(G')/G'\cong \Cen_H(H')/H'$.
		\item  $\Cen_G(G')/\Cen_G(G')'\cong \Cen_H(H')/\Cen_H(H')'$.   
\end{enumerate}
	\end{maintheorem}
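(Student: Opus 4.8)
The plan is to extract from $kG$ a canonical description of the normal subgroup $C := \Cen_G(G')$, and then to read off its exponent and the relevant abelian quotients. The structural engine that makes odd $p$ special, and which I would record first, is this: writing $G' = \langle c\rangle$ with $|c| = p^n$, conjugation induces a homomorphism $G \to \Aut(G') = (\Z/p^n)^\times$ whose kernel is exactly $C$; since $G$ is a $p$-group its image lies in the Sylow $p$-subgroup of $(\Z/p^n)^\times$, which for odd $p$ is cyclic of order $p^{n-1}$. Hence $G/C$ is cyclic --- and this is precisely what fails for $p=2$, where $(\Z/2^n)^\times$ is noncyclic. A companion fact I would use throughout is that $C$ has nilpotency class at most $2$ (indeed $[C,C] \le G' \le Z(C)$) and therefore, as its class is $\le 2 < p$, is a regular $p$-group with well-behaved power maps.

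Next I would assemble the standard invariants. A normalized isomorphism $\theta\colon kG \to kH$ identifies commutator ideals, so $\theta(\mathrm{I}(G')kG) = \mathrm{I}(H')kH$ and the induced map gives $k[G/G'] \cong k[H/H']$; by the abelian case this yields $G/G' \cong H/H'$, and comparison of $k$-dimensions gives $|G| = |H|$, whence $|G'| = |H'| = p^n$. From the dimension-subgroup (Jennings--Quillen) filtration I would recover the remaining numerical data: concretely $[G,G'] = \gamma_3(G) = \langle c^{p^{n-s}}\rangle$ has order $p^s = [G:C]$, so $s = \log_p[G:C]$ is fixed. At this stage I would also establish that $H'$ is cyclic of order $p^n$ with $[H:\Cen_H(H')] = p^s$, so that $D := \Cen_H(H')$ matches $C$ numerically; I treat the cyclicity of $H'$ either as a cited consequence of the structure theory of groups with cyclic commutator subgroup, or as a separate lemma.

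The heart of the argument is to show that $\theta$ carries $\mathrm{I}(C)kG$ to $\mathrm{I}(D)kH$, i.e.\ that the normal subgroup $C$ is an algebra invariant; equivalently, that the conjugation action of $G$ on $G'$, packaged as $\bar\chi\colon G/G' \to \Aut(G')$ with kernel $C/G'$, is recoverable from $kG$. Granting this, claim $(2)$ follows: since $\theta$ respects both $\mathrm{I}(G')kG$ and $\mathrm{I}(C)kG$, the induced isomorphism $k[G/G'] \cong k[H/H']$ carries the augmentation ideal of the subgroup $C/G'$ to that of $D/H'$, and a lemma on commutative modular group algebras --- that such a matching of subgroup-augmentation ideals forces the subgroups to be isomorphic (the relative version of the abelian theorem) --- gives $C/G' \cong D/H'$. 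Claim $(3)$ follows by locating $C' = [C,C] \le G'$ inside the recovered copy of $C$ via a commutator-ideal computation and passing to $C/C'$. Finally, claim $(1)$ is obtained from the now-fixed data $C/G'$, $G' \cong \Z/p^n$ and the cyclic action, together with the regularity of $C$: for a regular $p$-group the power maps behave like homomorphisms (using that $2 < p$), so the exponent is pinned down by the abelianization, the order of the derived subgroup, and the way $p$-th powers fall into $G'$.

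The main obstacle is the recovery of $\bar\chi$, equivalently of the ideal $\mathrm{I}(C)kG$. The difficulty is intrinsic: because $\mathrm{im}\,\bar\chi \subseteq 1 + p\Z/p^n$, one has $\chi(g) \equiv 1 \pmod p$ for every $g$, so the action of $G$ on each graded piece $\mathrm{I}(G')^j/\mathrm{I}(G')^{j+1}$ is trivial. Thus the associated graded algebra $\gr(kG)$ and its restricted Lie algebra --- the usual first source of modular invariants --- see nothing of $\bar\chi$, which is visible only in the non-graded, ``integral'' structure of $kG$, namely in how $G$ conjugates the subalgebra $k[G']$ as a whole. Overcoming this is where the real work lies: I would try to detect $C$ by analysing $kG$ as a module over the canonical central subalgebra attached to $G'$, or through the algebra centralizer $\Cen_{kG}(c)$ and canonical refinements of it, exploiting the cyclicity of $G/C$ to linearize the problem. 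It is exactly here that oddness of $p$ is indispensable, since for $p = 2$ the action on $G'$ need not factor through a cyclic group and the invariant genuinely fails.
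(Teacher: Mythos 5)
Your proposal correctly identifies the crux --- that one must show an algebra isomorphism carries $\augNor{\Cen_G(G')}{G}$ to $\augNor{\Cen_H(H')}{H}$ --- but it never proves it. The final paragraph explicitly concedes this ("Overcoming this is where the real work lies: I would try to detect $C$ by analysing \dots"), offering only undeveloped directions. Moreover, the concrete suggestion of using the algebra centralizer $\Cen_{kG}(c)$ is problematic at the outset: a generator $c$ of $G'$ is not canonical in $kG$, since an algebra isomorphism need not send group elements to group elements, so one cannot transport $\Cen_{kG}(c)$ without first solving a problem of the same difficulty. The paper closes exactly this gap by a completely different mechanism, and notably one that never recovers the conjugation action $\bar\chi$ itself (indeed the action is \emph{not} fully determined --- the invariants $u_1,u_2$ remain open): it only recovers the kernel, via the purely group-theoretic identity $\Cen_G(G')=\Omega_t(G:\ZZ(G)G')=\{g\in G: g^{p^t}\in \ZZ(G)G'\}$ where $p^t=|\ZZ(G)\cap G'|$ (Lemma 4.1, proved by Cheng's central product decomposition $G=H*G_1*\cdots*G_s*A$ together with the explicit computations for $2$-generated groups in Section 3). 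Since $\augNor{\ZZ(G)G'}{G}$ is canonically preserved and $t$ is an algebra invariant, the new Transfer Lemma (Lemma 2.5) --- which compares the kernels of the maps induced by $p$-th powering on the abelian quotients $G/N_G$ and $H/N_H$, using a Frobenius-twisted commutative diagram --- yields preservation of $\augNor{\Omega_t(G:\ZZ(G)G')}{G}$. Your proposal contains no analogue of either ingredient.

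Two subsidiary steps are also left vague in a way that matters. For the exponent, you assert that for a regular class-$\le 2$ group the exponent is "pinned down by the abelianization, the order of the derived subgroup, and the way $p$-th powers fall into $G'$"; the last datum is precisely what you would still need to show is an algebra invariant, and no argument is given. The paper instead observes that for $N\subseteq \Cen_G(G')$ one has $\M_{p^n}(N)=\mho_n(N)$ (class $\le 2$), so $\exp(N)=p^{\min\{n:\M_{p^n}(N)=1\}}$ is read off from the orders of the Jennings series terms of $\Cen_G(G')$, which are invariants once the relative augmentation ideal is matched (Lemma 2.4). Similarly, for claim $(3)$ you need $C'$ to be canonically located; the paper gets this from the identity $\Cen_G(G')'=\mho_{m-t}(G')$, whence $\augNor{\Cen_G(G')'}{G}=\augNor{G'}{G}^{p^{m-t}}$ is manifestly preserved --- again a consequence of the structural Lemma 4.1 that your outline lacks. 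Your reductions of $(2)$ and $(3)$ to the abelian-quotient argument are sound conditional on the crux, and your diagnosis that graded/restricted-Lie invariants cannot see $\bar\chi$ is correct and insightful; but as written the proposal is an announcement of the theorem's difficulty rather than a proof of it.
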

Our next results concern 2-generated $p$-groups with cyclic commutator subgroup. In order to present them we introduce some numerical invariants of these groups. Since the Modular Isomorphism Problem has a positive solution for abelian groups \cite{Deskins1956} we only consider non-abelian groups. 
To this end, let $G$ be a $2$-generated non-abelian $p$-group, that is, $G'$ is non-trivial and $G$ is generated by exactly $2$ elements. 
A \emph{basis} of $G$ is then a pair $(b_1,b_2)$ of elements of $G$ such that 
\[
G/G'=\GEN{b_1G'}\times \GEN{b_2G'} \textup{ and } |b_2G'| \textup{ divides } |b_1G'|.\]
Moreover, we define 
$$\OO(G)=\min\mbox{}_{\lex} \{ (|b_1\Cen_G(G')|, |b_2\Cen_G(G')|, -|b_1|, -|b_2|)  : (b_1,b_2) \textup{ is a basis of } G \},$$
where $\min_{\lex}$ refers to the minimum with respect to the lexicographic order. 
The following result implies that if $G'$ is cyclic then so is $H'$ and $\OO(G)$ is an algebra invariant of $kG$. 

\begin{maintheorem}\label{OTheorem}
	Let $k$ be a field of odd characteristic $p$ and let $G$ and $H$ be finite $p$-groups with $kG\cong kH$. If $G$ is $2$-generated and $G'$ is cyclic, then $H$ is $2$-generated, $H'$ is cyclic and $\OO(G)=\OO(H)$. 
\end{maintheorem}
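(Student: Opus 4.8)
The plan is to first dispose of the abelian case and set up the basic invariants. If $G$ is abelian then so is $H$, since commutativity of the group algebra is an algebra invariant, and the conclusion follows from the positive solution of the problem for abelian groups \cite{Deskins1956}; so assume $G'\neq 1$. Two standard invariants do the preliminary work: the dimension $\dim_k \aug{G}/\aug{G}^2$ equals the minimal number of generators of $G$, and the largest commutative quotient of $kG$ is $k[G/G']$, whence $k[G/G']\cong k[H/H']$ and, again by the abelian case, $G/G'\cong H/H'$. Since $\dim_k kG=|G|$, these yield at once that $H$ is $2$-generated and that $|H'|=|G|/|G/G'|=|H|/|H/H'|=|H'|$, so $|H'|=|G'|$. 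This settles the first assertion.

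The heart of the matter, and the step I expect to be the main obstacle, is showing that $H'$ is cyclic. The guiding observation—precisely where the oddness of $p$ is used—is that when $G'$ is cyclic of order $p^c$ the conjugation action realizes $G/\Cen_G(G')$ as a $p$-subgroup of $\Aut(G')\cong\Aut(C_{p^c})$, whose Sylow $p$-subgroup is cyclic for odd $p$; hence $G/\Cen_G(G')$ is cyclic. From \cref{ExponentCGG'} we already know $\Cen_H(H')/H'\cong\Cen_G(G')/G'$, and combined with $|H'|=|G'|$ this gives $|\Cen_H(H')|=|\Cen_G(G')|$ and therefore $|H/\Cen_H(H')|=|G/\Cen_G(G')|$. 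The remaining task is to exclude $H'$ having rank $\geq 2$: here I would combine the algebra-invariance of the orders of the quotients of the Jennings filtration (equivalently, of the lower central series), the cyclicity of $H'/\gamma_3(H)$ coming from $H$ being $2$-generated, and the fact that $H/\Cen_H(H')$ acts faithfully on $H'$ through a cyclic $p$-group of the same order $p^a$ as for $G$. The delicate point is that cyclicity of the individual lower central quotients does not by itself force $H'$ cyclic, so the argument must play the Theorem A data for $H$ against the genuinely cyclic $G'$ of matching order $p^c$ and matching $p^a\le c-1$; this structural bookkeeping is where the real difficulty lies.

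Once $H'$ is cyclic, \cref{ExponentCGG'} applies symmetrically to both groups and the proof of $\OO(G)=\OO(H)$ becomes a matter of expressing $\OO$ through algebra-invariant data alone. Writing $C_G=\Cen_G(G')$ and recalling $G'\le C_G$, the cyclic group $G/C_G$ is a quotient of $G/G'$, and the entries $|b_iC_G|$ appearing in $\OO(G)$ are the orders of the images of a basis under the fixed surjection $\pi_G\colon G/G'\twoheadrightarrow G/C_G$. I would argue that the data needed to compute the first two coordinates of the lexicographic minimum—namely the isomorphism type of $G/G'$, the order of the cyclic quotient $G/C_G$, and the position of $C_G/G'$ inside $G/G'$ up to $\Aut(G/G')$—are all algebra invariants by the identifications above, so these coordinates agree for $G$ and $H$. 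For the last two coordinates I would recover each order $|b_i|$ from the exponent of $\Cen_G(G')$, which is invariant by \cref{ExponentCGG'}, together with the order of $b_iC_G$ in the cyclic group $G/C_G$, reducing the equality $\OO(G)=\OO(H)$ to a coordinate-by-coordinate comparison of the two lexicographic minima.

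The genuinely hard input is therefore concentrated in the second paragraph; the first and third paragraphs are essentially a translation of known invariants and of \cref{ExponentCGG'} into the combinatorial data defining $\OO$. In carrying out the third step one must also check the routine but necessary fact that the lexicographic minimum is insensitive to the choice of representatives, i.e.\ that it depends only on the automorphism class of the pair $\bigl(G/G',\,C_G/G'\bigr)$ together with the element-order data, which is exactly the package shown to be invariant above.
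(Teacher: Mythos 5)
Your proposal has two genuine gaps, and interestingly you have mislocated the difficulty. The step you call ``the heart of the matter'' --- showing $H'$ is cyclic --- is not hard and you leave it unresolved: your sketch (lower central quotients, faithful cyclic action) is conceded by you not to close, and indeed it does not, but the paper gets this in one line. Since $\phi(\augNor{G'}{G})=\augNor{H'}{H}$ canonically (\cref{Known}\ref{ZGG'Can}), the Jennings quotients of $G'$ and $H'$ match (\cref{JenningsNor}), so $\dG(H')=\dG(G')=1$ by \cref{Known}\ref{it:2K}\ref{Generators}, and a $p$-group generated by one element is cyclic. You even invoke $\dim_k \aug{G}/\aug{G}^2$ for $G$ itself but miss that the same mechanism applies to the derived subgroup through the canonical relative ideal.

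The real difficulty sits in your third paragraph, where the two key assertions are unsubstantiated and one is false as stated. First, you claim the ``position of $C_G/G'$ inside $G/G'$ up to $\Aut(G/G')$'' is an algebra invariant ``by the identifications above,'' but nothing in your first paragraph (or in \cref{ExponentCGG'}) yields this: knowing $G/G'$, the abstract type of $C_G/G'$, and $|G/C_G|$ does not determine the embedding, and establishing its invariance is essentially equivalent to the equality $(o_1^G,o_2^G)=(o_1^H,o_2^H)$, which is the content of \cref{lem:o=o}. The paper proves this by induction on $|G'|$, passing to $G/\Soc(G')$ (using that $\augNor{G'}{G}^{p^{m-1}}=\augNor{\Soc(G')}{G}$), controlling how the invariants change under this quotient (\cref{ParametersQuotient}), and ruling out the two surviving discrepancy patterns by comparing $\exp(\ZZ(\Gamma)/\ZZ(\Gamma)\cap\Gamma')$ via the explicit generators of the center (\cref{CenterOdd}) --- none of which your outline supplies. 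Second, your recovery of the last two coordinates fails: $|b_i|=p^{n_i+o'_i}$ involves both $o'_1$ and $o'_2$, whereas $\exp(\Cen_G(G'))$ only yields a maximum --- e.g.\ $p^{\max(n_1+o'_1-o_1,\,n_2+o'_2)}$ when $o_2=0$ (\cref{G/L-N}\ref{ExpCGG'o2=0}) --- so the pair $(o'_1,o'_2)$ cannot be read off from $\exp(\Cen_G(G'))$ together with orders in $G/C_G$. The paper's \cref{lem:o'=o'} needs the same socle-quotient induction plus several further invariants ($\exp(G)$, the Jennings quotients $\M_{p^{n_2}}(G)/\M_{p^{n_2}+1}(G)$, and the canonical transfer of $\augNor{\Cen_G(G')}{G}$ from \cref{JenningsExpCGG'}) to separate the residual cases. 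Finally, note the paper's organization: \cref{Ooo'} shows $\OO(G)=(p^{o_1},p^{o_2},-p^{n_1+o'_1},-p^{n_2+o'_2})$, so \cref{OTheorem} is equivalent to \cref{oo'Theorem}; your proposal bypasses the classification of \cite{OsnelDiegoAngel} entirely, and it is precisely that machinery which fills the two holes above.
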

Observe that the hypothesis that $p$ is odd in \cref{OTheorem} is necessary because 
$$\OO(G_{m,n})=(2,2,-2^n,-2^m) \ne (2,1,-2^n,-2^m)=\OO(H_{m,n}).$$ 
This shows again a clear contrast between odd and even characteristic. 

For any $p$-group $G$ that is $2$-generated and for which $G'$ is cyclic, the vector $\OO(G)$ can essentially be extended to a vector 
$\inv(G)=(p,m,n_1,n_2,\sigma_1,\sigma_2,o_1,o_2,o'_1,o'_2,u_1,u_2)$ of numerical invariants characterizing the isomorphism class of $G$; cf.\ \Cref{SectionClassification} and \cite{OsnelDiegoAngel}. 
It is well known that the first four entries $p,m,n_1$ and $n_2$ of $\inv(G)$ are algebra invariants of $kG$. However, the sixth entry is not determined by the modular group algebra because $\inv(G_{m,n})=(2,2,n,m,-1,-1,0,0,0,0,1,1)$ is different from $\inv(H_{m,n})=(2,2,n,m,-1,1,0,0,0,0,1,1)$. 
Note that, for $p>2$, one always has $\sigma_1=\sigma_2=1$ and therefore the counterexample from \cite{GarciaMargolisdelRio} does not have a direct equivalent in odd characteristic. 
We will see that \Cref{OTheorem} is actually equivalent to the following. 

\begin{maintheorem}\label{oo'Theorem}
	Let $k$ be a field of odd characteristic $p$ and let $G$ be a  finite non-abelian $p$-group.
	If $G$ is $2$-generated with $G'$ cyclic then all but the last $2$ entries of $\inv(G)$ are algebra invariants of $kG$.
\end{maintheorem}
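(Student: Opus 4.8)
The plan is to treat the twelve coordinates of $\inv(G)$ in four groups and to push all of the genuine content into \Cref{OTheorem}. The entries fall into the classical quadruple $p,m,n_1,n_2$, the sign pair $\sigma_1,\sigma_2$, the order quadruple $o_1,o_2,o'_1,o'_2$, and the excluded pair $u_1,u_2$; the claim is that the first ten are determined by $kG$ while $u_1,u_2$ need not be. Since \Cref{OTheorem} already guarantees that $kG\cong kH$ forces $H$ to be $2$-generated with $H'$ cyclic, the vector $\inv(H)$ is defined, and it suffices to match the first ten coordinates of $\inv(G)$ and $\inv(H)$.

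For the classical quadruple I would invoke the known determinations quoted in the introduction. The prime $p$ is the characteristic of $k$; the order $|G|=\dim_k kG$ is an algebra invariant; and $G/G'$ is an algebra invariant of $kG$, so reading off its invariant factors recovers $n_1\ge n_2$ with $|b_iG'|=p^{n_i}$, while $p^m=|G'|=|G|/|G/G'|$ recovers $m$. For the sign pair I would use the structural remark made just before the statement: when $p$ is odd one always has $\sigma_1=\sigma_2=1$. Indeed $G'$ is a cyclic $p$-group with $p$ odd, so the Sylow $p$-subgroup of $\Aut(G')$ is cyclic and hence $G/\Cen_G(G')$, which embeds faithfully in $\Aut(G')$, is cyclic; the obstruction recorded by the $\sigma_i$ is present only for $p=2$, where $\Aut(G')$ carries an extra factor of order $2$. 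Thus $\sigma_1,\sigma_2$ are constant on the whole class under consideration and are trivially algebra invariants.

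The crux is the order quadruple, and here I would appeal directly to \Cref{OTheorem}. The point is that, by the construction of $\inv(G)$ in \Cref{SectionClassification}, the entries $o_i,o'_i$ are read off a lexicographically minimal basis $(b_1,b_2)$ of $G$, with $|b_i\Cen_G(G')|=p^{o'_i}$ and $|b_i|=p^{o_i}$, so that
\[
\OO(G)=\bigl(p^{o'_1},\,p^{o'_2},\,-p^{o_1},\,-p^{o_2}\bigr).
\]
Hence $\OO(G)$ and the quadruple $(o'_1,o'_2,o_1,o_2)$ determine one another, and $\OO(G)=\OO(H)$ from \Cref{OTheorem} gives $o_i(G)=o_i(H)$ and $o'_i(G)=o'_i(H)$. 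This simultaneously finishes the list of determined coordinates and exhibits the promised equivalence between \Cref{OTheorem} and the present statement.

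The step I expect to require the most care is this final dictionary: one must verify that the basis used to normalize $\inv(G)$ in \Cref{SectionClassification} can be chosen to realize the lexicographic minimum defining $\OO(G)$, i.e.\ that the classification's normalization is compatible with the ordering on $(|b_1\Cen_G(G')|,|b_2\Cen_G(G')|,-|b_1|,-|b_2|)$, so that no hidden dependence on $u_1,u_2$ leaks into the four order entries. Granting \Cref{OTheorem}, this bookkeeping, together with confirming that $u_1,u_2$ really are the only unconstrained coordinates, is all that remains.
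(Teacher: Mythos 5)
There is a genuine gap, and it is circularity. The entire content of your argument for the order quadruple is ``appeal directly to \Cref{OTheorem}'', but in this paper \Cref{OTheorem} has no independent proof: the authors explicitly deduce it from \Cref{oo'Theorem} together with the dictionary \Cref{Ooo'} (``Combining \Cref{oo'Theorem} with \Cref{Ooo'}, we will obtain \Cref{OTheorem}''). So your proposal establishes only the equivalence of the two theorems, which the paper already records in the introduction, and leaves unproved everything that actually requires work: that $kG\cong kH$ forces $(o_1^G,o_2^G)=(o_1^H,o_2^H)$ and $(o_1'^G,o_2'^G)=(o_1'^H,o_2'^H)$. The paper proves these in \Cref{lem:o=o} and \Cref{lem:o'=o'} by induction on $|G'|=p^m$: one passes to $\ov{\Gamma}=\Gamma/\Soc(\Gamma')$, uses \Cref{ParametersQuotient} to control how the invariants change under this quotient, and then eliminates the finitely many possible discrepancies between $\inv(G)$ and $\inv(H)$ using the isomorphism type of $\ZZ(\Gamma)/\ZZ(\Gamma)\cap\Gamma'$ computed from \Cref{CenterOdd}, the exponent of $\Gamma$, quotients of the Jennings series, and the exponent of $\Cen_\Gamma(\Gamma')$ via \Cref{JenningsExpCGG'}. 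None of this inductive machinery appears in your proposal. Your treatment of the easy coordinates is fine ($p$, $m$, $n_1$, $n_2$ from classical invariants; $\sigma_1=\sigma_2=1$ automatically for odd $p$ since $\Aut(G')$ has cyclic Sylow $p$-subgroup), but those are precisely the coordinates whose determination was already known.

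Two further points. First, your dictionary is misquoted: by \Cref{Ooo'} one has $\OO(G)=(p^{o_1},p^{o_2},-p^{n_1+o'_1},-p^{n_2+o'_2})$, whereas you wrote $(p^{o'_1},p^{o'_2},-p^{o_1},-p^{o_2})$, swapping the roles of $o_i$ and $o'_i$ and dropping the $n_i$; the correct relations are $|b_i\Cen_G(G')|=p^{o_i}$ and $|b_i|=p^{n_i+o'_i}$. Since $n_1,n_2$ are independently determined this error is repairable, and the ``bookkeeping'' you flag as delicate --- compatibility of the classification's $\min_{\lex}/\max_{\lex}$ normalizations with the lexicographic minimum defining $\OO(G)$ --- is exactly what \Cref{Ooo'} proves, so that step is sound in substance. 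But second, even after these repairs the structural problem remains: \Cref{OTheorem} and \Cref{oo'Theorem} are interderivable via \Cref{Ooo'}, so one of them must be proved from scratch, and it is that proof --- the induction on $m$ with its case analysis --- that your proposal omits entirely.
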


In other words, \cref{oo'Theorem} ensures that, for $p>2$, the first $10$ entries of $\inv(G)$ are determined by the modular group algebra $kG$ of $G$ over any field $k$ of characteristic $p$.
Unfortunately we have not been able to decide whether the last two entries of $\inv(G)$ are algebra invariants of $kG$. 
The smallest groups for which \Cref{oo'Theorem} does not solve the Modular Isomorphism Problem occur for
$(n_1,n_2,m,o_1,o_2,o_1',o_2')=(3,2,2,0,1,1,1)$,
in which case $u_2=1$ and $u_1\in \{1,\dots,p-1\}$. 
That is, the last parameters yield $p-1$ non-isomorphic 2-generated $p$-groups with cyclic commutator subgroup.
In a paper in preparation, we develop new techniques (different from those presented in this paper) to prove that, for this special case and many others, $u_1$ is actually also an algebra invariant of $kG$. %, but that uses other techniques which do not fit in this paper.
\Cref{oo'Theorem} enables us, however, to improve \Cref{ExponentCGG'} in the $2$-generated case by showing that  the isomorphism type of $\Cyc_G(G')$ is an algebra invariant of $kG$: 

\begin{maincorollary}\label{Cor1.0}
	Let $k$ be a field of odd characteristic $p$  and let $G$ be a finite $2$-generated $p$-group  with $G'$ cyclic. Then the isomorphism type of  $\Cen_G(G') $ is an algebra invariant of $kG$.
\end{maincorollary}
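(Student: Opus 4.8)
The plan is to reduce everything to the explicit classification of $2$-generated $p$-groups with cyclic commutator subgroup encoded by the invariant vector $\inv$. Since $kG\cong kH$, \Cref{OTheorem} guarantees that $H$ is again $2$-generated with $H'$ cyclic, so that $\inv(H)$ is defined and, by the classification recalled in \Cref{SectionClassification} and \cite{OsnelDiegoAngel}, determines $H$ up to isomorphism. Next, \Cref{oo'Theorem} tells us that the first ten entries $(p,m,n_1,n_2,\sigma_1,\sigma_2,o_1,o_2,o'_1,o'_2)$ of $\inv(G)$ and $\inv(H)$ agree; recall that for $p$ odd the entries $\sigma_1=\sigma_2=1$ are automatic, so only $u_1,u_2$ are left undetermined. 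It therefore suffices to prove the purely group-theoretic statement that, for $p$ odd, the isomorphism type of $\Cen_G(G')$ depends only on the first ten entries of $\inv(G)$ — equivalently, that it is insensitive to $u_1$ and $u_2$. Note that \Cref{ExponentCGG'} alone supplies only $\Cen_G(G')/\Cen_G(G')'$, which is insufficient precisely because $\Cen_G(G')$ need not be abelian; this is why the finer input of \Cref{oo'Theorem} is needed.

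To establish the reduction, I would start from the explicit presentation of $G$ on a basis $(b_1,b_2)$ with $G'=\GEN{c}$ cyclic of order $p^m$, in which the conjugation action of $b_i$ on $c$ has $p$-power order $p^{o_i}$, and in which $u_1,u_2$ enter only through the power relations that place $b_i^{p^{n_i}}$ inside $G'$. Since $\Cen_G(G')$ is the kernel of the homomorphism $G\to\Aut(\GEN{c})$, $g\mapsto(c\mapsto gcg^{-1})$, and since the $p$-part of $\Aut(\GEN{c})$ is cyclic, the quotient $G/\Cen_G(G')$ is cyclic; a routine adjustment of the basis accounting for this cyclicity identifies $\Cen_G(G')$ as generated by $G'$ together with the least powers $b_1^{p^{o_1}},b_2^{p^{o_2}}$ (up to a mixed generator of the form $b_2b_1^{-t}$). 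Writing out the induced relations among these generators and $c$, and using that $c$ is central in $\Cen_G(G')$ by construction, yields a presentation of $\Cen_G(G')$ whose data are governed by $m,n_1,n_2,o_1,o_2,o'_1,o'_2$.

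The heart of the argument, and the step I expect to be the main obstacle, is to eliminate $u_1,u_2$ from this presentation. The key observation is that $u_1,u_2$ occur only as unit multipliers of powers of $c$, in relations of the shape $b_i^{(\ldots)}=c^{u_i p^{\ast}}$; since $c$ generates the cyclic central group $G'$ and $u_i$ is coprime to $p$, the subgroup generated by such a power does not depend on $u_i$. The plan is therefore to exhibit an explicit change of generators inside $\Cen_G(G')$ — rescaling $c$ by $c^{\lambda}$ for a unit $\lambda$ modulo $p^m$ and correcting $b_1^{p^{o_1}},b_2^{p^{o_2}}$ by elements of the central subgroup $G'$ — that normalizes both $u_1$ and $u_2$ to canonical values. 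The delicate point is to check that a single rescaling of $c$, together with the available corrections of the $b_i$ by central elements, provides enough freedom to neutralize \emph{both} parameters simultaneously; here one must exploit the numerical constraints linking $o_i,o'_i$ and $m$ (for example, in the extremal configuration $(n_1,n_2,m,o_1,o_2,o'_1,o'_2)=(3,2,2,0,1,1,1)$ one already has $u_2=1$ forced) to ensure the normalization never over-determines the generators. Should a simultaneous normalization prove awkward, the fallback is to build an explicit isomorphism $\Cen_G(G')\to\Cen_H(H')$ matching the two presentations generator by generator. Once this is done, the presentation of $\Cen_G(G')$ involves only the first ten invariants, which coincide for $G$ and $H$, and hence $\Cen_G(G')\cong\Cen_H(H')$, proving \Cref{Cor1.0}.
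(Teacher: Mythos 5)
Your overall strategy --- pass to the classification, write a presentation of $\Cen_G(G')$ from the data $(p,m,n_1,n_2,o_1,o_2,o'_1,o'_2)$, and normalize away $u_1,u_2$ by rescaling $a$ and correcting the generators by central elements --- is exactly the paper's strategy, and it does succeed in most cases (the paper's proof of \cref{cor:Cent} carries out precisely such normalizations, e.g.\ $(x,y,z)\mapsto(b_1^{u_2},b_2^{u_1p^{o_2}},a^{u_1u_2})$ when $o_1=0$). But your key reduction is false as stated: the isomorphism type of $\Cen_G(G')$ is \emph{not} in general determined by the first ten entries of $\inv(G)$, i.e.\ it is not insensitive to $(u_1,u_2)$. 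The problematic configuration is $o_1o_2\neq 0$ with $s:=n_1+o'_1-o_1-(n_2+o'_2-o_2)=0$. There $\Cen_G(G')=\GEN{b_1^{p^{o_1-o_2}}b_2^{-1},\,b_2^{p^{o_2}},\,a}$, and because the two power relations land on the \emph{same} $p$-power of $a$, the surviving datum is the combination $u_1-u_2$; writing $u_1-u_2=wp^{\ell}$ with $p\nmid w$, the valuation $\ell$ enters the presentation through the exponent $e=\max\{o'_1-\ell,\,o'_1-o_2,\,m-n_1+o_1-o_2,\,0\}$ in the relation $x^{p^{n_1-o_1+o_2}}=z^{p^{m-e}}$. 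Two groups sharing all ten entries but having different $\ell$ (e.g.\ $u_1=u_2$ versus $u_1\not\equiv u_2 \bmod p$) have non-isomorphic centralizers, so no rescaling of $a$ and no central correction of the generators can normalize both $u_i$ simultaneously, and your fallback of matching presentations generator by generator cannot succeed either: the target isomorphism simply does not exist. Note also that the extremal example you use as a sanity check, $(n_1,n_2,m,o_1,o_2,o'_1,o'_2)=(3,2,2,0,1,1,1)$, has $o_1=0$ and so lies entirely inside the easy case.

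The missing ingredient is an additional \emph{algebra} invariant beyond $\inv(G)$'s first ten entries that pins down $e$. The paper obtains it from \cref{JenningsExpCGG'}: by the Transfer Lemma (\cref{mainlemma}) applied with $N_\Gamma=\ZZ(\Gamma)\Gamma'$, any isomorphism $kG\to kH$ maps $\augNor{\Cen_G(G')}{G}$ onto $\augNor{\Cen_H(H')}{H}$, whence the orders of the terms of the Jennings series of the two centralizers agree; since $\Cen_G(G')$ has class at most $2$, $\M_{p^n}(\Cen_G(G'))=\mho_n(\Cen_G(G'))$, and one checks that $p^e=|\M_{p^{n_1-o_1+o_2}}(\Cen_G(G'))|$. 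With $e$ thus recognized as an algebra invariant, the presentation $\Delta$ depends only on $(p,m,n_1,n_2,o_1,o_2,o'_1,o'_2,e)$ and the argument closes. So your proposal needs to be repaired by replacing the claim ``$\Cen_G(G')$ depends only on the first ten entries'' with ``$\Cen_G(G')$ depends only on the first ten entries together with the Jennings data of $\Cen_G(G')$, which is itself an algebra invariant'' --- without this, the $s=0$ case is a genuine counterexample to your reduction.
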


The following corollary also follows   from \Cref{oo'Theorem} 
(see \Cref{SectionPreliminaries} for the definition of the type invariants of a $p$-group). 

\begin{maincorollary}\label{Cor1.1}
Let $k$ be a field of odd characteristic $p$ and let $G$ be a finite $2$-generated $p$-group with $G'$ cyclic. Then the type invariants of $G$ are algebra invariants of $kG$.
\end{maincorollary}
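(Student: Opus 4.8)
The plan is to deduce the statement formally from \Cref{oo'Theorem}, using \Cref{OTheorem} to transport the structural hypotheses from $G$ to $H$. First I would observe that, since $kG\cong kH$ with $G$ being $2$-generated and $G'$ cyclic, \Cref{OTheorem} guarantees that $H$ is $2$-generated with $H'$ cyclic as well; hence the invariant vector $\inv(H)=(p,m,n_1,n_2,\sigma_1,\sigma_2,o_1,o_2,o'_1,o'_2,u_1,u_2)$ is defined. Because $p$ is odd, one has $\sigma_1=\sigma_2=1$ for both $G$ and $H$, and \Cref{oo'Theorem} then tells us that the first ten entries of $\inv(G)$ and $\inv(H)$ agree.

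The core of the argument is to show that the type invariants of $G$ form a function of $\inv(G)$ that factors through its first ten coordinates alone. Using the explicit description of $G$ in terms of $\inv(G)$ recalled in \Cref{SectionClassification}, I would read off from a basis $(b_1,b_2)$ the relevant order data and the isomorphism types of the abelian sections entering the type invariants, and verify that these are determined by $p,m,n_1,n_2,o_1,o_2,o'_1,o'_2$ alone. Granting this, the conclusion is immediate: the type invariants of $G$ coincide with those of $H$ precisely because the corresponding entries of $\inv(G)$ and $\inv(H)$ coincide by \Cref{oo'Theorem}, which is exactly the assertion that the type invariants are algebra invariants of $kG$.

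The hard part will be this independence check, since a priori the full isomorphism type of $G$ --- and therefore some of its numerical data --- depends on all twelve entries of $\inv(G)$, including the parameters $u_1,u_2$ that we cannot yet certify as algebra invariants. The point to establish is that $u_1$ and $u_2$ only encode the fine power-commutator relations pinning down $G$ among the groups sharing the values $(p,m,n_1,n_2,\sigma_1,\sigma_2,o_1,o_2,o'_1,o'_2)$, while leaving unchanged the element orders and the isomorphism types of the abelian sections out of which the type invariants are built. Once this insensitivity to $(u_1,u_2)$ is established, no further computation is required and the corollary follows formally from \Cref{oo'Theorem}.
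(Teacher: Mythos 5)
Your strategy coincides with the paper's own proof of \cref{cor:type}: deduce from \cref{oo'Theorem} that the first ten entries of $\inv(G)$ and $\inv(H)$ agree, and then show that the type invariants are a function of $(p,m,n_1,n_2,o_1,o_2,o_1',o_2')$ alone. The gap is that the step you defer as ``the hard part'' is the entire mathematical content of the corollary, and your sketch of it does not go through as stated. The paper proves the closed formulas $e_1^\Gamma=\max(n_1+o_1',n_2+o_2')$, $e_1^\Gamma+e_2^\Gamma+e_3^\Gamma=m+n_1+n_2$ and, crucially, $e_3^\Gamma=m-\max(o_1',o_2')$. The first two are cheap (the exponent via \cref{Characteristic}\ref{exponente}, and the order of the group), but the third is not ``read off from a basis'': using $\omega(\Gamma)=3$ one has $p^{e_3^\Gamma}=\min\{|g| : g\in \Phi(\Gamma)\setminus \mho_1(\Gamma)\}$, and computing this minimum via \eqref{Exp} leads to the congruence \eqref{Congruence1}, in which $u_1^\Gamma$ and $u_2^\Gamma$ genuinely appear. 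Your claim that $u_1,u_2$ ``leave unchanged the element orders'' is precisely what has to be proved, and it is false at the level of individual words in the basis: the power relations $b_i^{p^{n_i}}=a^{u_ip^{m-o_i'}}$ involve the $u_i$, so whether a given element $b_1^{xp^{s_1}}b_2^{yp^{s_2}}a^z$ has order dividing $p^N$ depends on them. What the paper establishes is the weaker but sufficient fact that the \emph{minimum} value of $N$ in \eqref{Congruence1} is independent of $u_1^\Gamma,u_2^\Gamma$, by exhibiting explicit solutions (e.g.\ $z=Bu_1$ with $AB\equiv 1 \bmod p^m$) at $N=m-\max(o_1',o_2')$, together with the lower bound $N\ge m-\max(o_1',o_2')$; nothing in your proposal substitutes for this computation.

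Two smaller omissions are worth flagging. First, the case distinction $\omega(\Gamma)\in\{2,3\}$ matters: when $\max(o_1',o_2')=m$ the group is metacyclic, the formula scheme above degenerates, and the paper disposes of that case by invoking the positive solution of the Modular Isomorphism Problem for metacyclic groups, \cref{Known}\ref{it:3K}\ref{Metacyclic}; your blanket ``factors through the first ten coordinates'' claim would need a separate argument there. Second, quoting \cref{OTheorem} to transport the hypotheses to $H$ is harmless but redundant: that $H$ is $2$-generated with $H'$ cyclic already follows from \cref{Known}\ref{it:2K}\ref{Generators}, which is what the paper uses, and in the paper's logical order \cref{OTheorem} is itself deduced from \cref{oo'Theorem} rather than serving as an input to it.
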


The paper is organized as follows. In \Cref{SectionPreliminaries} we establish the notation, recall some known facts about the Modular Isomorphism Problem and prove a key lemma which we refer to as the Transfer Lemma (\Cref{mainlemma}). In \Cref{SectionClassification} we recall the classification of finite 2-generated $p$-groups with cyclic commutator subgroup from \cite{OsnelDiegoAngel} in the specific case where $p>2$. Additionally, we prove a series of results about these groups which will be used in the next and final section to prove the main results of the paper.

\vspace{10pt}

\section{Notation and preliminaries}\label{SectionPreliminaries}

In this section, we introduce the notation that will be used throughout this paper. We also collect some classical results on the Modular Isomorphism Problem that will be useful in the coming sections, as well as a new criterion for the transfer of ideals between modular group algebras; cf.\ \cref{mainlemma}.

%We start with number theoretical notation. 
Throughout the paper, $p$ will denote a prime number, $k$ a field of characteristic $p$ and $G$ and $H$ finite $p$-groups. The modular group algebra of $G$ over $k$ is denoted by $kG$ and the  augmentation ideal of $kG$ is denoted by $\aug{G}$. It is a classical result that $\aug{G}$ is also the Jacobson ideal of $kG$. For every normal subgroup $ N$ of $ G$, we write $\augNor{N}{G}$ for the relative augmentation ideal $\aug{N}kG$. 

We let $\le_{\lex}$ denote the lexicographic order on tuples of integers of the same length. 
Then $\min_{\lex}$ and $\max_{\lex}$ stand for minimum and maximum with respect to $\le_{\lex}$.
For a non-zero integer $n$, let $v_p(n)$ denote the $p$-adic  valuation of $n$, that is, the greatest integer $t$ such that $p^t$ divides $n$. 
Moreover, set  $v_p(0)=+\infty$. 
For coprime integers $m$ and $n$, write  $o_m(n)$ for  the multiplicative order of $n$ modulo $m$, i.e.\ the smallest  non-negative  integer $k$ with $n^k\equiv 1 \bmod m$.
Given  non-zero  integers $s,t$ and $n$ with $n \ge 0$ we set
$$\Ese{s}{n} = \sum_{i=0}^{n-1} s^i \qand \Te{s,t}{n} = \sum_{0\le  	i < j < n} s^i t^j.$$
The last notation allows us, in some cases, to compactly express powering of products in a group $G$. For instance, if $g,h\in G$ and $r,s,n$ are integers with $n\geq 0$ then, writing $a=[h,g]=h^{-1}g^{-1}hg$, we get the following identities: 
\begin{equation}\label{eq:conj-r}
\textup{if } g^h=h^{-1}gh=g^r \textup{ then } (hg)^n=h^ng^{\Ese{r}{n}},
\end{equation}
\begin{equation}\label{eq:double-conj}
\textup{if } a^g=a^r \textup{ and } a^h=a^s \textup{ then } (gh)^n=g^nh^na^{\Te{r,s}{n}}.
\end{equation}
The next lemma describes elementary properties of the operators  $\mathcal{S}$ and $\mathcal T$ that are collected in Lemmas 8.2 and 8.3 of \cite{OsnelDiegoAngel}. 

\begin{lemma}\label{EseProp}
	Let $p$ be an odd prime number and let $n>0$ and $s,t$ be integers satisfying $s\equiv t \equiv 1 \bmod p$. Then the following hold:
	\begin{enumerate}[label=$(\arabic*)$]
		\item\label{ValEse} $v_p(s^n-1)=v_p(s-1)+v_p(n)$, $v_p(\Ese{s}{n})=v_p(n)$ and $\ord_{p^n}(s)={p^{\max(0,n-v_p(s-1))}}$.   
		\item\label{EseCero} 
		if $v_p(s-1)=a$ and $p^{m-a}$ divides $n$ 
		%either $n$ or $n-1$ is multiple of $p^{m-a}$ 
		then $ \Ese{s}{n} \equiv n \bmod p^m $. 
		\item\label{ValEse2}
		$\Te{s,t}{p^n} \equiv 0 \bmod p^n $.
	\end{enumerate}
\end{lemma}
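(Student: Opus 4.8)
The plan is to treat the three statements in order, deriving each from a single Lifting-the-Exponent computation.

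For the first statement, the heart is the initial identity, which is the Lifting-the-Exponent Lemma for $p$ odd. I would first dispose of the exponent $n=p$: writing $s=1+c$ with $v_p(c)=v_p(s-1)$, a binomial expansion of the geometric sum gives $\Ese{s}{p}=\sum_{i=0}^{p-1}(1+c)^i = p + c\binom{p}{2} + (\text{higher } c\text{-powers})$, and since $p$ is odd the term $c\binom{p}{2}$ and all later terms have strictly larger $p$-valuation than $p$, so $v_p(\Ese{s}{p})=1$; as $s^p-1=(s-1)\Ese{s}{p}$ this yields $v_p(s^p-1)=v_p(s-1)+1$. Iterating gives the claim when $n$ is a power of $p$, and for general $n=p^k m$ with $p\nmid m$ one factors $s^n-1=(s^{p^k}-1)\Ese{s^{p^k}}{m}$ and notes $\Ese{s^{p^k}}{m}\equiv m\not\equiv 0 \bmod p$. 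The second identity is then immediate from $\Ese{s}{n}=(s^n-1)/(s-1)$ (the case $s=1$ being trivial), and the third follows because $\ord_{p^n}(s)$ is the least positive $k$ with $v_p(s^k-1)\ge n$, i.e.\ the least $k$ with $v_p(k)\ge n-v_p(s-1)$, namely $p^{\max(0,\,n-v_p(s-1))}$.

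For the second statement I would expand $\Ese{s}{n}$ binomially and sum by the hockey-stick identity: with $\sigma=s-1$ one gets $\Ese{s}{n}=\sum_{i=0}^{n-1}(1+\sigma)^i=\sum_{j\ge 0}\sigma^j\binom{n}{j+1}$. The $j=0$ term is exactly $n$, so it suffices to show every $j\ge 1$ term lies in $p^m\Z$. Using $v_p(\binom{n}{j+1})\ge v_p(n)-v_p(j+1)\ge (m-a)-v_p(j+1)$ together with $v_p(\sigma^j)=aj$, the $j$-th term has valuation at least $m+a(j-1)-v_p(j+1)$; the elementary inequality $a(j-1)\ge v_p(j+1)$, which uses $a\ge 1$ and $p$ odd so that $v_p(2)=0$ handles $j=1$, finishes this.

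For the third statement the efficient route is induction on $n$, with base case $n=0$ the empty sum. For the inductive step I would split each index in $\{0,\dots,p^{n+1}-1\}$ as $qp^n+r$ with $0\le q<p$ and $0\le r<p^n$; sorting the pairs $i<j$ according to whether their high parts $q$ agree produces the recursion
\[
\Te{s,t}{p^{n+1}} = \Ese{(st)^{p^n}}{p}\,\Te{s,t}{p^n} + \Te{s^{p^n},t^{p^n}}{p}\,\Ese{s}{p^n}\Ese{t}{p^n}.
\]
By the first statement $v_p(\Ese{(st)^{p^n}}{p})=1$ and $v_p(\Ese{s}{p^n})=v_p(\Ese{t}{p^n})=n$, while $\Te{s^{p^n},t^{p^n}}{p}\equiv \binom{p}{2}\equiv 0 \bmod p$ because each of its summands is $\equiv 1 \bmod p$; combined with the inductive hypothesis $v_p(\Te{s,t}{p^n})\ge n$, both summands on the right are divisible by $p^{n+1}$. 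The main obstacle is isolated precisely here: the symmetric identity $\Ese{s}{N}\Ese{t}{N}=\Te{s,t}{N}+\Te{t,s}{N}+\Ese{st}{N}$ only controls the \emph{sum} $\Te{s,t}{N}+\Te{t,s}{N}$, and the closed form $\Te{s,t}{N}=(\Ese{st}{N}-\Ese{t}{N})/(s-1)$ combined with the second statement gives the bound only when $v_p(s-1)\le v_p(t-1)$. The block-decomposition recursion is what makes the argument uniform in $s$ and $t$, and carefully verifying that recursion and the valuations of its two factors is the one place demanding genuine care.
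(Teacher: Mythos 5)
Your proposal is correct, but note that the paper itself contains no proof of this lemma: it is quoted verbatim from Lemmas~8.2 and~8.3 of \cite{OsnelDiegoAngel}, so there is no in-paper argument to compare against, and your self-contained derivation is necessarily a different (and here quite useful) route. I checked each step. In $(1)$, the expansion $\Ese{s}{p}=p+c\binom{p}{2}+\cdots+c^{p-1}$ with $c=s-1$ does give $v_p(\Ese{s}{p})=1$, since $v_p\bigl(c\binom{p}{2}\bigr)\geq 2$ and $v_p(c^{p-1})\geq p-1\geq 2$ precisely because $p$ is odd (for $p=2$ the term $c\binom{2}{2}=c$ can cancel against $2$, which is where the lemma fails in even characteristic); iterating and splitting $n=p^km$ with $\Ese{s^{p^k}}{m}\equiv m\not\equiv 0\bmod p$ is the standard lifting-the-exponent argument, and the order formula follows formally. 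In $(2)$, the hockey-stick rewriting $\Ese{s}{n}=\sum_{j\geq 0}\sigma^j\binom{n}{j+1}$ together with $v_p\binom{n}{j+1}\geq v_p(n)-v_p(j+1)$ reduces everything to $a(j-1)\geq v_p(j+1)$, which holds for $j=1$ only because $v_p(2)=0$ for odd $p$, and for $j\geq 2$ from $p^{j-1}\geq 3^{j-1}\geq j+1$; this is a clean and complete argument. In $(3)$, your block decomposition of indices modulo $p^n$ yields the recursion
\begin{equation*}
\Te{s,t}{p^{n+1}} \;=\; \Ese{(st)^{p^n}}{p}\,\Te{s,t}{p^n} \;+\; \Te{s^{p^n},t^{p^n}}{p}\,\Ese{s}{p^n}\,\Ese{t}{p^n},
\end{equation*}
which I verified by sorting pairs $i<j$ in $\{0,\dots,p^{n+1}-1\}$ according to whether their quotients by $p^n$ agree; the first summand has valuation $\geq 1+n$ by part $(1)$ and induction, and the second has valuation $\geq 1+2n$, using $\Te{s^{p^n},t^{p^n}}{p}\equiv\binom{p}{2}\equiv 0 \bmod p$ (again odd $p$). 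Your closing remark is also apt: the naive symmetrization $\Ese{s}{N}\Ese{t}{N}=\Te{s,t}{N}+\Te{t,s}{N}+\Ese{st}{N}$ controls only the sum of the two $\mathcal{T}$-terms, and the closed form $\Te{s,t}{N}=(\Ese{st}{N}-\Ese{t}{N})/(s-1)$ loses $v_p(s-1)$ in the division, so the recursion is genuinely needed to get a bound uniform in $s$ and $t$. What your route buys is a proof of the lemma that makes the present paper independent of \cite{OsnelDiegoAngel} on this point and isolates exactly where oddness of $p$ enters, at the three spots flagged above.
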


\begin{lemma}\label{EseBijective}
		Let $p$ be an odd prime number and $m$ and $r$ be integers with $m>0$ and $r\equiv 1 \mod p$. Then for every integer $0\le x < p^m$ there is a unique integer $0\le y < p^m$ such that $\Ese{r}{y}\equiv x \mod p^m$. 
	\end{lemma}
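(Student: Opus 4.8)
The plan is to recast the statement as the assertion that the map
\[
\Phi \colon \{0,1,\dots,p^m-1\} \to \Z/p^m\Z, \qquad y \mapsto \Ese{r}{y} \bmod p^m,
\]
is a bijection. Since both its domain and its codomain have exactly $p^m$ elements, it suffices to prove that $\Phi$ is injective; surjectivity, and with it the existence and uniqueness of the required $y$ for each $x$, then follows automatically by the pigeonhole principle.

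To prove injectivity I would take $0\le y\le y'<p^m$ with $\Ese{r}{y}\equiv \Ese{r}{y'} \bmod p^m$ and aim to conclude $y=y'$. The key algebraic observation is the factoring identity
\[
\Ese{r}{y'}-\Ese{r}{y}=\sum_{i=y}^{y'-1}r^i=r^{y}\,\Ese{r}{y'-y},
\]
which follows directly from the definition of $\mathcal S$ by shifting the summation index. Since $r\equiv 1 \bmod p$, the integer $r^{y}$ is coprime to $p$ and therefore a unit modulo $p^m$; thus the congruence $\Ese{r}{y'}\equiv \Ese{r}{y}$ is equivalent to $\Ese{r}{d}\equiv 0 \bmod p^m$, where $d=y'-y$ satisfies $0\le d<p^m$.

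It then remains to show that $\Ese{r}{d}\equiv 0 \bmod p^m$ forces $d=0$. If $d>0$, then part \ref{ValEse} of \cref{EseProp} gives $v_p(\Ese{r}{d})=v_p(d)$, and from $0<d<p^m$ one has $v_p(d)\le m-1<m$; hence $\Ese{r}{d}\not\equiv 0 \bmod p^m$, a contradiction. Therefore $d=0$, i.e.\ $y=y'$, which establishes injectivity and completes the argument.

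The proof is short once \cref{EseProp} is in hand, so I do not expect a serious obstacle; the only point requiring genuine care is the reduction to the single variable $d$ via the factoring identity together with the observation that $r^{y}$ is a unit, after which everything hinges on the valuation formula $v_p(\Ese{r}{d})=v_p(d)$. I anticipate that the most delicate bookkeeping is merely keeping the boundary cases consistent, namely separating $d=0$ from $d>0$ and respecting the empty-sum convention $\Ese{r}{0}=0$ (so that $x=0$ corresponds to $y=0$).
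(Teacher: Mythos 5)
Your proof is correct and follows essentially the same route as the paper: the same factoring identity $\Ese{r}{y'}-\Ese{r}{y}=r^{y}\Ese{r}{y'-y}$, the unit $r^{y}$, the valuation formula $v_p(\Ese{r}{d})=v_p(d)$ from \cref{EseProp}\ref{ValEse}, and the injectivity-implies-bijectivity counting step. Your write-up is in fact slightly more careful than the paper's, since you explicitly isolate the case $d=0$ and the convention $\Ese{r}{0}=0$.
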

	\begin{proof}
		Let $x$ and $y$ be integers with $0\le x \le y$. Then $\Ese{r}{y}-\Ese{r}{x}=r^x\Ese{r}{y-x}$ and hence  from  \Cref{EseProp}\ref{ValEse} it follows that $\Ese{r}{x}\equiv \Ese{r}{y} \mod p^m$ if and only if $x\equiv y \mod p^m$. This shows that $\Ese{r}{\cdot}$ induces an injective, thus bijective, map $\Z/{p^m}\Z\to \Z/{p^m}\Z$.
		 Then the result follows immediately.
	\end{proof}

The group theoretic notation we use is mostly standard. For an arbitrary group $G$, let $|G|$ denote its order, $\ZZ(G)$ its center, $\{\gamma_i(G)\}_{i\geq 1}$ its lower central series, $G'=\gamma_2(G)$,  its commutator subgroup, $\exp(G)$ its exponent and  $\dG(G)=\min \{|X|   :  X\subseteq G \textup{ and } G=\GEN{X}\}$, its minimum number of generators. Moreover, if $g\in G$ and $X\subseteq G$ then $|g|$ denotes the order of $g$ and $\Cen_G(X)$ the centralizer of $X$ in $G$.
We write $\times$ both for internal and external direct products of groups.
For $n\geq 1$, we denote by $C_n$   the cyclic group of order $n$.  

Let  now  $G$ be a finite $p$-group and let $p^e=\exp(G)$. For every $0\le n\le e$ we   define   the following subgroups of $G$:
$$\Omega_n(G)=\GEN{g\in G : g^{p^n}=1} \qand 
\mho_n(G)=\GEN{g^{p^n} : g \in G}.$$   
If $N$ is a normal subgroup of $G$,   we also write
$$\Omega_n(G:N)=\GEN{g\in G: g^{p^{n}}\in N}, $$ that is, $\Omega_n(G:N)$ is the only subgroup of $G$ containing $N$ such that 
$$\Omega_n(G:N)/N=\Omega_n(G/N). $$ 
 The group  $G$ is said to be \emph{regular} if for every $g,h\in G$  there exist $c_1,\dots,c_k\in \GEN{g,h}'$ such that  $(gh)^p=g^ph^pc_1^p\cdots c_k^p$, in other words  
 	$$(gh)^p\equiv g^ph^p\bmod \mho_1(\GEN{g,h}').$$ 
It is well known that if $p$ is odd and $G'$ is cyclic then $G$ is regular, while this is not the case for $p=2$; cf. \cite[Satz~III.10.2, Satz~III.10.3(a)]{Hup67}. Moreover, if $G$ is regular, \cite[Hauptsatz~III.10.5, Satz~III.10.7]{Hup67} ensure that the following hold:
\begin{itemize}
\item $\Omega_n(G) = \{g\in G : g^{p^n}=1\}$  and $\mho_n(G)= \{g^{p^n} : g \in G\}$,
\item $|\Omega_n(G)|\cdot|\mho_n(G)|=|G|$ for every $0\le n \le e$.
\end{itemize} 
For every $n\ge 1$ and $G$ regular we define $w_n$ by means of
	$$p^{\omega_n}=|\Omega_n(G)/\Omega_{n-1}(G)|=|\mho_{n-1}(G)/\mho_n(G)|$$
	and remark that $\omega_1\ge \omega_2\ge \ldots \ge \omega_e>0$. Following \cite[\S~III.10]{Hup67} we set $\omega(G)=\omega_1$  and, for $1\le i \le \omega(G)$, we define
$$e_i=|\{1\le n \le e : \omega_n \ge i\}|.$$
 It follows that $e=e_1\ge e_2\ge \ldots \ge e_{\omega(G)}$ and the entries of the list $(e_1,\dots,e_{\omega(G)})$ are called the \emph{type invariants} of $G$. 
 
  The \emph{Jennings series} $(\M_n(G))_{n\geq 1}$ of the $p$-group $G$ is defined by 
\begin{equation}
\label{EqJenningsLazard}
\M_{n}(G) = \{g\in G : g-1\in \aug{G}^i\}= \prod_{ip^j\ge n}   \mho_j(\gamma_i(G)) 
\end{equation} 
The Jennings series is also known as the \emph{Brauer-Jennings-Zassenhaus series} or the \emph{Lazard series}
or the \emph{dimension subgroup series} of $kG$
 (see \cite[Section~11.1]{Pas77} for details).   A property of these series that we will use is that, for abelian groups, the orders of the terms completely determine the structure of the group.  
 For more on the Jennings series, see for instance \cite[Section~III.1]{Seh78}, \cite[Section~4]{Mar22} and \cite[Section~2.3]{MS22}.

The next proposition and lemma collect some well-known results which will be used throughout the paper.

\begin{proposition}\label{Known}
Let $k$ be a field of positive characteristic $p$ and let $G$ be a finite $p$-group.
Then the following statements hold: 
\begin{enumerate}[label=$(\arabic*)$]
\item\label{ZGG'Can} If $H$ is a finite $p$-group and $\phi:kG\rightarrow kH$ is an isomorphism of $k$-algebras 
then  $$\phi(\augNor{G'}{G})=\augNor{H'}{H} \ \textup{ and } \ \phi(\augNor{\ZZ(G)G'}{G})=\augNor{\ZZ(H)H'}{H}.$$
\item\label{it:2K} The following group invariants of $G$ are algebra invariants of $kG$: 
    \begin{enumerate}[label=$(\alph*)$]
    \item  \label{AbelDeter}
    The isomorphism type of $G/G'$.
    \item\label{expDet} The exponent of $G$.    
    \item
    \label{JenningsDet} 
    The isomorphism type of the consecutive quotients $\M_i(G)/\M_{i+1}(G)$ and $\M_i(G')/\M_{i+1}(G')$ of the Jennings series of $G$ and $G'$.
    \item\label{Generators}The minimum number of generators $\dG(G)$ of $G$   and  $\dG(G')$ of $G'$.
    \item\label{Z/ZG'} The isomorphism types of $\ZZ(G)\cap G'$ and $\ZZ(G)/\ZZ(G)\cap G'$.  
    \end{enumerate}
\item\label{it:3K} The Modular Isomorphism Problem has a positive solution in the following cases:
    \begin{enumerate}[label=$(\alph*)$]
    \item\label{it:MIPAbelian} $G$ is abelian.
    \item\label{Metacyclic} $G$ is metacyclic. 
    \item\label{2GenClass2} $G$ is $2$-generated of class $2$ and $p$ is odd.
    \end{enumerate}
\end{enumerate}
\end{proposition}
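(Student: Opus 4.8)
The plan is to derive every assertion from two engines: canonical (isomorphism-invariant) descriptions of a few distinguished ideals of $kG$, and the theory of dimension (Jennings) subgroups, supplemented by the positive solutions of the Modular Isomorphism Problem already recorded for the restricted classes in part \ref{it:3K}. The underlying observation for most of part \ref{it:2K} is that an isomorphism $\phi\colon kG\to kH$ automatically carries the Jacobson radical to the Jacobson radical; since $\aug{G}=J(kG)$, it sends $\aug{G}$ to $\aug{H}$ and $\aug{G}^n$ to $\aug{H}^n$ for all $n$, so the entire radical filtration and its consecutive quotients are algebra invariants.

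For part \ref{ZGG'Can} I would first identify $\augNor{G'}{G}$ with the two-sided ideal of $kG$ generated by the Lie commutators $xy-yx$. Indeed $kG/\augNor{G'}{G}\cong k(G/G')$ is commutative, so the commutator ideal is contained in $\augNor{G'}{G}$; conversely, writing a group commutator $g=[a,b]$ as $g-1=a^{-1}b^{-1}(ab-ba)$ and using that $G'$ is generated by such commutators gives the reverse inclusion. Since the commutator ideal is manifestly preserved by any $k$-algebra isomorphism, the first identity follows. For the second identity the key point is that any two conjugate elements of $G$ lie in a common coset of $G'$, so that for a non-central class $C$ with representative $g_0$ one has $\widehat{C}=\sum_{g\in C}g\equiv |C|\,g_0\equiv 0\bmod \augNor{G'}{G}$, because $|C|$ is a positive power of $p$. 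Combined with the fact that $\ZZ(kG)\cap\aug{G}$ is spanned by the non-central class sums together with the elements $z-1$ for $z\in\ZZ(G)$, this yields
\[
\augNor{\ZZ(G)G'}{G}=\augNor{G'}{G}+\big\langle \ZZ(kG)\cap\aug{G}\big\rangle,
\]
where $\langle\,\cdot\,\rangle$ denotes the generated two-sided ideal. Each ingredient on the right---the commutator ideal, the algebra center $\ZZ(kG)$, and the radical $\aug{G}$---is an algebra invariant, so $\phi$ respects this ideal as well.

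Part \ref{it:2K} then largely follows. Item \ref{AbelDeter} comes from $k(G/G')\cong k(H/H')$ via \ref{ZGG'Can} together with the abelian case \ref{it:MIPAbelian} (equivalently, directly from dimension-subgroup orders, which determine a finite abelian $p$-group). The value $\dG(G)$ in \ref{Generators} is immediate from $\dG(G)=\dim_k \aug{G}/\aug{G}^2$. The Jennings quotients of $G$ in \ref{JenningsDet} follow from the description $\M_n(G)=\{g\in G:g-1\in\aug{G}^n\}$ recorded in \eqref{EqJenningsLazard} and the preservation of the radical filtration, each consecutive quotient being elementary abelian and pinned down by its $k$-dimension. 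For \ref{Z/ZG'} one reads $\ZZ(G)G'/G'\cong\ZZ(G)/(\ZZ(G)\cap G')$ off the ideal of \ref{ZGG'Can}, while \ref{expDet} and the invariant $\ZZ(G)\cap G'$ I would import from the classical literature. Finally, part \ref{it:3K} is quoted directly: \ref{it:MIPAbelian} from \cite{Deskins1956}, \ref{Metacyclic} from \cite{BaginskiMetacyclic}, and \ref{2GenClass2} from the known positive solution for $2$-generated class-two groups in odd characteristic.

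The steps I expect to be genuinely delicate, and for which I would rely on the cited refined results rather than reprove from scratch, are the dimension-subgroup data and $\dG(G')$ for the \emph{commutator} subgroup in \ref{JenningsDet}--\ref{Generators}, together with the determination of $\ZZ(G)\cap G'$ in \ref{Z/ZG'}, and the exponent in \ref{expDet}. The obstruction for $G'$ is structural: although $\M_n(G')$ admits an intrinsic description inside $kG'$ (one has $\aug{G'}^n=\augNor{G'}{G}^n\cap kG'$ because $G'\trianglelefteq G$ makes $\augNor{G'}{G}=\aug{G'}kG$ a two-sided ideal and $kG$ a free $kG'$-module), the subalgebra $kG'$ itself is \emph{not} visibly carried along by $\phi$---only the ideal $\augNor{G'}{G}$ is---so transporting this filtration requires the sharper invariance theorems of Sandling and others. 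The exponent is classical but is likewise best cited. All remaining items reduce, as indicated above, to the two ideal identities of \ref{ZGG'Can} and to elementary bookkeeping with the radical filtration.
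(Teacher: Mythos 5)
Your proposal is correct, but it takes a more self-contained route than the paper, whose ``proof'' of this proposition is purely bibliographical: a remark tracing \ref{ZGG'Can} to the inside of the proof of \cite[Theorem~2.(ii)]{BK07}, items \ref{AbelDeter}--\ref{expDet} to Sehgal, Ward, Sandling and K\"ulshammer, \ref{JenningsDet} to \cite[Lemma~14.2.7(i)]{Pas77} and \cite[Lemma~2]{BaginskiMetacyclic} (or to \Cref{JenningsNor} with $L_G=G$ and $L_G=G'$), \ref{Z/ZG'} to \cite[Theorem~6.11]{Sandling85}, and \ref{it:3K} to \cite{Deskins1956,BaginskiMetacyclic,San96,BdR20}, together with explicit caveats about generalizing $\F_p$-proofs to arbitrary $k$. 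You instead reconstruct the substance of \ref{ZGG'Can}: identifying $\augNor{G'}{G}$ with the Lie-commutator ideal is standard and correct, and your identity $\augNor{\ZZ(G)G'}{G}=\augNor{G'}{G}+\GEN{\ZZ(kG)\cap\aug{G}}$ is valid --- the key computation that a non-central class sum $\widehat{C}$ lies in $\augNor{G'}{G}$ because $C$ sits in a single coset of $G'$ and $|C|\equiv 0$ in $k$ is exactly the argument hidden in \cite{BK07}, and the spanning description of $\ZZ(kG)\cap\aug{G}$ by the elements $z-1$, $z\in\ZZ(G)$, and the non-central class sums checks out. Your derivations of \ref{AbelDeter} from $kG/\augNor{G'}{G}\cong k(G/G')$ plus the abelian case, of $\dG(G)=\dim_k\aug{G}/\aug{G}^2$, and of the Jennings quotients of $G$ from the radical filtration are sound, and you correctly isolate the genuinely delicate items --- the Jennings data and $\dG$ for $G'$ (since only $\augNor{G'}{G}$, not the subalgebra $kG'$, is visibly transported), $\ZZ(G)\cap G'$, and the exponent --- deferring them to precisely the refined results the paper cites. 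Two minor glosses: reading the isomorphism type of $\ZZ(G)/(\ZZ(G)\cap G')$ ``off the ideal'' still requires the argument of \cite[Lemma~14.2.7(ii)]{Pas77} for abelian quotients (the paper uses it explicitly elsewhere), and for \ref{JenningsDet} the passage from preserved radical powers to the orders $|\M_i(G)/\M_{i+1}(G)|$ uses Jennings' dimension formula rather than a bare dimension count; both are classical, so neither is a gap. What each approach buys: yours makes \ref{ZGG'Can} and much of \ref{it:2K} self-contained, while the paper's citation-based treatment is shorter and records the $\F_p$-versus-$k$ subtleties (e.g.\ that the $p=2$ analogue of \ref{it:3K}\ref{2GenClass2} does not generalize to arbitrary $k$), which your write-up omits.
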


 \begin{lemma}\label{JenningsNor}
	Let $k$ be a field of characteristic $p>0$, let $G$ and $H$ be  finite $p$-groups and let $L_G$  and $L_H$ be normal subgroups of $G$ and $H$,  respectively. Assume that there is an isomorphism $\phi:kG\to kH$ such that  $ \phi\left(\augNor{L_G}{G} \right)=\augNor{L_H}{H}$. Then,  for each $i\geq 1$, there is an isomorphism of groups\begin{equation*}
	\M_{i}(L_G)/\M_{i+1}(L_G) \cong \M_{i}(L_H)/\M_{i+1}(L_H).
	\end{equation*} 
\end{lemma}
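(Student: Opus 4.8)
The plan is to reduce the claimed isomorphism of Jennings quotients to an equality of the dimension sequences $\dim_k(\aug{L_G}^n/\aug{L_G}^{n+1})$ and $\dim_k(\aug{L_H}^n/\aug{L_H}^{n+1})$, and then to invoke Jennings' theorem. Each quotient $\M_i(L_G)/\M_{i+1}(L_G)$ is an elementary abelian $p$-group (because $[\M_i,\M_i]\subseteq \M_{2i}$ and $\mho_1(\M_i)\subseteq\M_{ip}$, both contained in $\M_{i+1}$), so its isomorphism type is determined by its rank $d_i(L_G)=\log_p|\M_i(L_G)/\M_{i+1}(L_G)|$. By Jennings' theorem (cf.\ \cite[Section~11.1]{Pas77}) the Poincar\'e series $\sum_n \dim_k(\aug{L_G}^n/\aug{L_G}^{n+1})\,t^n$ equals $\prod_{i\ge1}\bigl((1-t^{pi})/(1-t^i)\bigr)^{d_i(L_G)}$, and the resulting triangular system of equations recovers the exponents $d_i(L_G)$ from the coefficient sequence. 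Hence it will suffice to prove $\dim_k(\aug{L_G}^n/\aug{L_G}^{n+1})=\dim_k(\aug{L_H}^n/\aug{L_H}^{n+1})$ for every $n$.

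The key step links the relative augmentation ideal in $kG$ to the absolute one in $kL_G$. By definition $\augNor{L_G}{G}=\aug{L_G}\,kG$, and since $L_G$ is normal in $G$, conjugation by any $g\in G$ fixes $\aug{L_G}$ setwise, so $\aug{L_G}\,kG=kG\,\aug{L_G}$; multiplying out then gives $\augNor{L_G}{G}^n=\aug{L_G}^n\,kG$ for all $n$. As $kG=\bigoplus_{t\in T}kL_G\,t$ is free as a left $kL_G$-module for a transversal $T$ of $L_G$ in $G$, and $\aug{L_G}^n$ is a two-sided ideal of $kL_G$, this yields the $k$-space decomposition $\augNor{L_G}{G}^n=\bigoplus_{t\in T}\aug{L_G}^n\,t$, whence $\dim_k\augNor{L_G}{G}^n=[G:L_G]\cdot\dim_k\aug{L_G}^n$. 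Consequently $\dim_k(\augNor{L_G}{G}^n/\augNor{L_G}{G}^{n+1})=[G:L_G]\cdot\dim_k(\aug{L_G}^n/\aug{L_G}^{n+1})$, and likewise for $H$.

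Finally, the isomorphism $\phi$ transfers everything: from $\phi(\augNor{L_G}{G})=\augNor{L_H}{H}$ we get $\phi(\augNor{L_G}{G}^n)=\augNor{L_H}{H}^n$, so the consecutive powers have equal codimensions; moreover $\phi$ induces an isomorphism $kG/\augNor{L_G}{G}\cong kH/\augNor{L_H}{H}$, and comparing dimensions gives $[G:L_G]=[H:L_H]$. Dividing out this common nonzero factor produces $\dim_k(\aug{L_G}^n/\aug{L_G}^{n+1})=\dim_k(\aug{L_H}^n/\aug{L_H}^{n+1})$ for all $n$, and the reduction of the first paragraph then forces $d_i(L_G)=d_i(L_H)$ and hence $\M_i(L_G)/\M_{i+1}(L_G)\cong\M_i(L_H)/\M_{i+1}(L_H)$. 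I expect the main obstacle to be the bookkeeping behind the identity $\augNor{L_G}{G}^n=\aug{L_G}^n\,kG$ and the direct-sum decomposition: both rest on the normality of $L_G$ (used to commute $\aug{L_G}$ past the transversal) and on $kG$ being $kL_G$-free, and these must be spelled out carefully to make the dimension count rigorous.
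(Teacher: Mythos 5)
Your proof is correct and is essentially the argument behind the paper's treatment: the paper gives no in-text proof of this lemma, instead citing Sandling's Lemma~6.26 (for $k=\F_p$) with the remark that it generalizes, and your chain — $\augNor{L_G}{G}^n=\aug{L_G}^n\,kG$ via normality, $\dim_k\augNor{L_G}{G}^n=[G:L_G]\cdot\dim_k\aug{L_G}^n$ via $kL_G$-freeness, transfer of dimensions and of $[G:L_G]=[H:L_H]$ through $\phi$, and triangular recovery of the ranks $d_i$ from the Jennings Poincar\'e series — is precisely the standard proof being invoked, spelled out in full. The only convention to make explicit is that $\phi$ should be read as a $k$-algebra isomorphism (as elsewhere in the paper), so that the $k$-dimension counts transfer along $\phi$.
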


\begin{remark}{\rm 
		Although all the statements of \Cref{Known}   are well known, some of them appear in the literature with the assumption that $k =\F_p $, the field  with $p$ elements, and the proof of others is hidden inside proofs of other statements. We add a few words so that the reader can track the results in the literature.
		
		The proof of \ref{ZGG'Can} can be found inside the proof of \cite[Theorem~2.(ii)]{BK07}.  	Statements 
		\ref{AbelDeter} and \ref{expDet} from \ref{it:2K} are proven in \cite{Sehgal1967,Ward}, \cite{Sandling85, Ward} and 
		\cite{Kulshammer1982}, respectively.
		The statement in \ref{it:2K}\ref{JenningsDet} for the Jenning series of $G$ is proved in \cite[Lemma~14.2.7(i)]{Pas77} while the statement for $G'$ is proved for $k=\F_p$  in \cite[Lemma~2]{BaginskiMetacyclic} and it can be generalized to arbitrary $k$ using the argument from \cite[Lemma~14.2.7(i)]{Pas77}. 
		Observe that the two statements in \ref{it:2K}\ref{JenningsDet} also follow  from  \Cref{JenningsNor} specialized to $L_G=G$ and $L_G=G'$, respectively. 
		Statement \ref{it:2K}\ref{Generators} is a consequence of \ref{it:2K}\ref{JenningsDet} because $[\M_1(G):\M_2(G)]=p^{\dG(G)}$.
		In \cite[Theorem~6.11]{Sandling85} point \ref{it:2K}\ref{Z/ZG'} is stated for $k=\F_p$, but its proof can be easily generalized to hold for any $k$. 
		
		Statement \ref{it:3K}\ref{it:MIPAbelian} is proven in \cite[Theorem~2]{Deskins1956} while statements \ref{Metacyclic} and \ref{2GenClass2} of \ref{it:3K} are proven for $k=\F_p$ in  \cite{BaginskiMetacyclic, San96} and  \cite{BdR20}, respectively. The latter proofs generalize gracefully for any $k$. Note that  the analogue of   \ref{it:3K}\ref{2GenClass2} for $p=2$ appears in \cite{BdR20} for $k =\F_2 $, but the proof does not  generalize to arbitrary $k$.
		
Finally, \Cref{JenningsNor} is proven for $k=\F_p$ in \cite[Lemma~6.26]{Sandling85} and the proof can be easily generalized to hold for any field of characteristic $p$.
}\end{remark}

 We close this section with a lemma that we will make use of to obtain new group algebra invariants from old ones. A version of this lemma, specialized for $N_\Gamma= \Gamma'$ and with a different proof, appears in \cite{SalimTesis}. 

\begin{lemma}[Transfer Lemma]\label{mainlemma}
	Let $p$ be a prime number and  $G$ and $H$ finite $p$-groups.
For $\Gamma\in \{G,H\}$ let $N_\Gamma$ be a subgroup of $\Gamma$ containing $\Gamma'$.  
	If $k$ is a field of characteristic $p$ and $\phi:kG\rightarrow kH$ is a ring isomorphism such that $\phi(\augNor{N_G}{G})=\augNor{N_H}{H}$ then 
	$\phi(\augNor{G}{\Omega_t(G:N_G)})=\augNor{H}{\Omega_t(H:N_H)}$ for every positive integer $t$. 
\end{lemma}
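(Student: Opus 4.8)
The plan is to reduce the identity to the abelian quotients $\ov{G}=G/N_G$ and $\ov{H}=H/N_H$, on which the operator $\Omega_t$ admits a purely ring-theoretic description. First I would observe that, since $N_G$ contains $G'$, it is normal in $G$ and $\ov{G}$ is abelian; likewise for $H$. Setting $M_G=\Omega_t(G:N_G)$, the subgroup $M_G$ is normal in $G$, contains $N_G$, and satisfies $M_G/N_G=\Omega_t(\ov{G})$ by definition. Recalling that for a normal subgroup $N\trianglelefteq G$ the ideal $\augNor{N}{G}$ is the kernel of the natural projection $kG\to k(G/N)$, I would note that $\augNor{M_G}{G}=\pi_G^{-1}\bigl(\augNor{\Omega_t(\ov{G})}{\ov{G}}\bigr)$, where $\pi_G\colon kG\to k\ov{G}$ is the projection with kernel $\augNor{N_G}{G}$; the analogous statements hold for $H$.

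The crucial step is a ring-theoretic characterization of $\augNor{\Omega_t(A)}{A}$ valid for an arbitrary finite abelian $p$-group $A$. Since $kA$ is commutative of characteristic $p$, the map $\theta\colon kA\to kA$, $x\mapsto x^{p^t}$, is a ring endomorphism (additivity is Frobenius, multiplicativity uses commutativity), so $\{x\in kA:x^{p^t}=0\}$ is an ideal. A direct computation identifies it: for $x=\sum_{a\in A}c_a a$ we have $x^{p^t}=\sum_a c_a^{p^t}a^{p^t}$, and grouping the terms by the value $b=a^{p^t}\in\mho_t(A)$, whose fibres are exactly the cosets of $\Omega_t(A)$, the coefficient of $b$ becomes $\bigl(\sum_{a^{p^t}=b}c_a\bigr)^{p^t}$. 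As $k$ is reduced, this vanishes for all $b$ if and only if $\sum_{a^{p^t}=b}c_a=0$ for every coset, which is precisely the condition that $x$ lie in the kernel of $kA\to k(A/\Omega_t(A))$. Hence
\[
\augNor{\Omega_t(A)}{A}=\{x\in kA:x^{p^t}=0\},
\]
a set manifestly preserved by any ring isomorphism. I expect this commutativity-dependent identity to be the only substantive point; it is also what forces the reduction to the abelian quotient, since $x\mapsto x^{p^t}$ fails to be a ring homomorphism on $kG$ when $G$ is non-abelian.

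Finally I would assemble the argument. The hypothesis $\phi(\augNor{N_G}{G})=\augNor{N_H}{H}$ means that $\phi$ maps $\ker\pi_G$ onto $\ker\pi_H$, so it descends to a ring isomorphism $\ov{\phi}\colon k\ov{G}\to k\ov{H}$ with $\pi_H\circ\phi=\ov{\phi}\circ\pi_G$. Applying the displayed identity to $A=\ov{G}$ and $A=\ov{H}$ and using that $\ov{\phi}$ preserves the set $\{x:x^{p^t}=0\}$ gives $\ov{\phi}\bigl(\augNor{\Omega_t(\ov{G})}{\ov{G}}\bigr)=\augNor{\Omega_t(\ov{H})}{\ov{H}}$. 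The intertwining relation $\pi_H\circ\phi=\ov{\phi}\circ\pi_G$ yields $\phi\bigl(\pi_G^{-1}(J)\bigr)=\pi_H^{-1}\bigl(\ov{\phi}(J)\bigr)$ for every ideal $J\subseteq k\ov{G}$; taking $J=\augNor{\Omega_t(\ov{G})}{\ov{G}}$ and recalling $\augNor{M_G}{G}=\pi_G^{-1}(J)$ gives $\phi(\augNor{M_G}{G})=\augNor{M_H}{H}$. Since $t$ was arbitrary, this is the claim.
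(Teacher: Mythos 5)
Your proof is correct, and while it rests on the same fundamental mechanism as the paper's --- descend $\phi$ to a ring isomorphism of the commutative quotients $k(G/N_G)\to k(H/N_H)$ and exploit the canonicity of the $p$-power map there --- the execution is genuinely different and arguably cleaner. The paper first reduces to $t=1$ via $\Omega_t(\Gamma:N_\Gamma)=\Omega_1(\Gamma:\Omega_{t-1}(\Gamma:N_\Gamma))$ and then runs a commutative-diagram argument: it factors the ring endomorphism $x\mapsto x^p$ of $k(\Gamma/N_\Gamma)$ as $\tau_\Gamma\circ\sigma_\Gamma$, where $\sigma_\Gamma$ is the $k$-linear extension of $g\mapsto g^p$ (whose kernel cuts out the relative augmentation ideal of $\Omega_1(\Gamma:N_\Gamma)$) and $\tau_\Gamma$ is the coefficient Frobenius, injective because $k$ has no nilpotents; the ideal in question is then identified as $\ker(\lambda_\Gamma\circ\pi_{N_\Gamma})$, which $\phi$ transports since $\lambda_\Gamma$ is intrinsic. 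You instead isolate a single closed-form identity,
\[
\augNor{\Omega_t(A)}{A}=\{x\in kA : x^{p^t}=0\}
\]
for any finite abelian $p$-group $A$, proved by exactly the two ingredients hidden in the paper's factorization: Frobenius grouping of coefficients along the fibres of $a\mapsto a^{p^t}$ (which are cosets of $\Omega_t(A)$ precisely because $A$ is abelian), and reducedness of $k$ to detect vanishing of the coset sums. This handles all $t$ at once, with no induction and no diagram, and gives a reusable, purely ring-theoretic description of these ideals that is manifestly preserved by any ring (not merely $k$-algebra) isomorphism --- matching the lemma's hypothesis, just as the paper's argument does. What the paper's packaging buys in exchange is the explicit isomorphism $\hat\sigma_\Gamma: k(\Gamma/\Omega_1(\Gamma:N_\Gamma))\to k(\mho_1(\Gamma/N_\Gamma))$, which can be convenient elsewhere, but as a proof of the stated lemma your route is shorter and makes more transparent where each hypothesis (commutativity of $\Gamma/N_\Gamma$, reducedness of $k$) enters.
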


\begin{proof}
Fix $\Gamma\in\{G,H\}$. 
As $\Gamma/N_\Gamma$ is abelian we have $\Omega_t(\Gamma:N_\Gamma)=\{g\in \Gamma : g^{p^t}\in N_\Gamma\}$. Then, if $t\ge 2$ we have  $\Omega_t(\Gamma:N_\Gamma)=\Omega_1(\Gamma:\Omega_{t-1}(\Gamma:N_\Gamma))$ and hence we assume without loss of generality that $t=1$.
For brevity write $L_\Gamma=\mho_1(\Gamma,N_\Gamma)$ and 
we  will  prove that $\phi(\augNor{G}{L_G})=\augNor{H}{L_H}$.
	
Let $\tau_\Gamma:k\Gamma\rightarrow k\Gamma$ be the  ring  homomorphism extending the identity on $\Gamma$ and the Frobenius map $x\rightarrow x^p$ on $k$. 
Moreover, for a normal subgroup $K$ of $\Gamma$, let $\pi_K:k\Gamma \to k(\Gamma/K)$ denote the natural projection with 
$\ker \pi_K=\augNor{K}{\Gamma}$.  
As $\Gamma/N_{\Gamma}$ is abelian, the assignment $g\mapsto g^p$ on $\Gamma$ induces a ring homomorphism  $\lambda_{\Gamma}:k(\Gamma/N_{\Gamma})\rightarrow k(\mho_1(\Gamma/N_{\Gamma}))$. We denote by $\sigma_\Gamma$ the $k$-linear map extending the restriction of $\lambda_{\Gamma}$ to $\Gamma/N_{\Gamma}$. 
By the definition of $L_\Gamma$ and the hypothesis $\phi(\augNor{N_G}{G})=\augNor{N_H}{H}$, we have that $\sigma_\Gamma$ induces an isomorphism $\hat\sigma_\Gamma:k(\Gamma/L_\Gamma)\rightarrow k(\mho_1(\Gamma/N_\Gamma))$ and 
$\phi$ induces an isomorphism $\hat\phi:k(G/N_G)\rightarrow k(H/N_H)$ making the following diagram commute:
$$\xymatrix{
	& k(G/L_G) \ar[r]^{\hat\sigma_G \ \  \ } & k(\mho_1(G/N_G)) \ar[d]^{\tau_G} \\
	kG \ar[d]^{\phi} \ar[r]^{\pi_{N_G} \ \ \ } \ar[ru]^{\pi_{L_G}} & k(G/N_G) \ar[d]^{\hat \phi}  \ar[r]^{\lambda_G \ \ } \ar[ru]^{\sigma_G} &    k(\mho_1(G/N_G))  \ar[d]^{\hat \phi} \\
	kH \ar[r]^{\pi_{N_H} \ \ \ } \ar[rd]^{\pi_{L_H}} & k(H/N_H) \ar[r]^{\lambda_H \ \ } \ar[rd]^{\sigma_H} &  k(\mho_1(H/N_H))  \\
	& k(H/L_H) \ar[r]^{\hat\sigma_H \ \ \ } & k(\mho_1(H/N_H)) \ar[u]_{\tau_H} \\
}$$
As $\phi$ is a bijection, then so is $\hat \phi$. Moreover, each $\hat\sigma_{\Gamma}$ is bijective and each $\tau_\Gamma$ is injective, thus we have 
\begin{align*}
\phi(\augNor{L_G}{G})& =\phi(\ker \pi_{L_G}) =\phi(\ker(\tau_G \circ \hat\sigma_G \circ \pi_{L_G})) 
=\phi(\ker(\lambda_G \circ \pi_{N_G})) 
=\ker(\lambda_H \circ \pi_{N_H})\\
& =\ker(\tau_H \circ \hat\sigma_H \circ \pi_{L_H})
=\ker(\pi_{L_H})=\augNor{L_H}{H},
\end{align*} as desired.
\end{proof}

\section{Finite $2$-generated $p$-groups with cyclic commutator and $p$ odd} \label{SectionClassification}

In this section \fbox{\emph{$p$ is an odd prime number.}}   We start  by recalling the classification of non-abelian $2$-generated $p$-groups with cyclic commutator subgroup from \cite{OsnelDiegoAngel}. 
Each such group $G$ is showed to be uniquely determined, up to isomorphism, by an integral vector\footnote{The classification in \cite{OsnelDiegoAngel} is performed for all primes $p$  and for $p=2$ the fifth and sixth entries  of $\inv(G)$   may also be $-1$.} 
\[
\inv(G)=(p,m,n_1,n_2,1,1,o_1,o_2,o'_1,o'_2,u_1,u_2)
\] 
of length $12$ whose entries are determined as   described below.  
The first four are straightforward and satisfy:
\begin{itemize}
%    \item $p$ is the unique prime number dividing $|G|$.
%	\item $m$ satisfies $p^m=|G'|$.
	\item $|G|=p^m$, 
	\item $G/G'\cong C_{p^{n_1}}\times C_{p^{n_2}}$ with $n_1\ge n_2\ge 1$. 
\end{itemize}
To continue, we define a \emph{basis} of $G$ to be a pair $(b_1,b_2)$ of elements of $G$ such that $G/G'=\GEN{b_1G'}\times \GEN{b_2G'}$ and $|b_iG'|=p^{n_i}$. Let $\B$ denote the set of bases of $G$. Moreover, 
for every $g\in G$, let $o(g)\in\Z_{\geq 0}$ be  such that  $p^{o(g)}$ is the order of $g\Cen_G(G')$ in $G/\Cen_G(G')$, in symbols  $p^{o(g)}=|g\Cen_G(G')|$. Equivalently, $o(g)=m-v_p(r(g)-1)$, where $r(g)$ denotes the unique integer satisfying $2\leq r(g)\leq p^m+1$ and $a^g=a^{r(g)}$ for each $a\in G'$. Define:
\begin{itemize}	
	\item 
		$(o_1,o_2)=\min_{\lex}\{(o(b_1),o(b_2)) : (b_1,b_2) \in \B\}$ and
%	\item Define 
	\begin{equation}\label{Erres}
	r_1=1+p^{m-o_1} \qand 
	r_2 = \begin{cases} 1+p^{m-o_2}, & \text{if } o_2>o_1; \\ r_1^{p^{o_1-o_2}}, & \text{otherwise}. \end{cases}
	\end{equation}
\end{itemize}
Let now $\B_r=\B_r(G)$ be the set consisting of all bases $(b_1,b_2)$ of $G$ with the property that, for every $a\in G'$ and $i=1,2$, one has $a^{b_i}=a^{r_i}$, equivalently, $r(b_i)\equiv r_i\mod p^m$. The set $\B_r$ is not empty as proved in \cite[Proposition~2.3(5)]{OsnelDiegoAngel}. 
For every $b=(b_1,b_2)\in \B$, let $o'(b)=(o'_1(b),o'_2(b))$ and $u(b)=(u_2(b),u_1(b))$ be defined by 
\begin{equation}
p^{n_i+o'_i(b)}=|b_i|,\quad b_i^{p^{n_i}}=[b_2,b_1]^{u_i(b)p^{m-o'_i(b)}} \qand 1\le u_i(b)<p^{m-o'_i(b)}.
\end{equation}
Define subsequently:
\begin{itemize}
	\item $(o'_1,o'_2) = \max_{\lex}\{o'(b) : b\in \B_r\}$ and
	\item $(u_2,u_1) = \min_{\lex} \{ u(b) : b\in \B_r \text{ with } o ' (b)=(o'_1,o'_2)\}$.  
%	\item
%		$$(o'_1,o'_2) = \max_{\lex}\{o'(b) : b\in \B_r\} \qand (u_2,u_1) = \min_{\lex} \{ u(b) : b\in \B_r \text{ with } o(b)=(o'_1,o'_2)\}.$$
\end{itemize}
We have described how the entries of $\inv(G)$ are computed directly as structural invariants of $G$.
Conversely, for a list of non-negative integers $I=(p,m,n_1,n_2,o_1,o_2,o'_1,o'_2,u_1,u_2)$, defining $r_1$ and $r_2$ as in \eqref{Erres}, the group $\G_I$ is defined as 
	$$\G_I=\GEN{b_1,b_2,a=[b_2,b_1] \mid a^{p^m}=1, a^{b_i}=a^{r_i}, b_i^{p^{n_i}}=a^{u_ip^{m-o'_i}} \; (i=1,2)}.$$
Denoting by $[G]$ the isomorphism class of a group $G$, the main result of \cite{OsnelDiegoAngel} for $p$ odd takes the following form. 

\begin{theorem}%[\cite{OsnelDiegoAngel}]
	\label{Main}
	The maps $[G]\mapsto \inv(G)$ and $I\mapsto [\mathcal{G}_I]$ define mutually inverse bijections between the isomorphism classes of $2$-generated non-abelian groups of odd prime-power  order and the set of lists of integers $(p,m,n_1,n_2,  o_1,o_2,o'_1,o'_2,u_1,u_2)$ satisfying the following conditions. 	
	\begin{enumerate}[label=$(\arabic*)$] 
		\item \label{1} $p$ is prime and $n_1\geq n_2 \ge 1 $.
		\item \label{2}  $0\le o_i<m$, $0\le o'_i \le m-o_i$ and $p\nmid u_i$ for $i=1,2$.    
%		\item \label{3}$0\le o'_i \le m-o_i$ for $i=1,2$.
		\item \label{4} One of the following conditions holds:  
		\begin{enumerate}[label=$(\alph*)$] 
			\item $o_1=0$ and $o'_1\le o'_2\le o'_1+o_2+n_1-n_2$.  
			\item $o_2=0<o_1$, $n_2<n_1$ and $o'_1+\min(0,n_1-n_ 2  -o_1)\le o'_2\le o'_1+n_1-n_2$.
			\item $0<o_2< o_1<o_2+n_1-n_2$ and $o'_1\le o'_2\le o'_1+n_1-n_2$. 
		\end{enumerate}	
		
		\item \label{5}   $o_2+o_1'\leq m\le n_1$  and one of the following conditions hold:
		\begin{enumerate}[label=$(\alph*)$] 
			\item $o_1+o'_2\le m \le n_2$.
			\item $2m-o_1-o'_2=n_2<m$ and $u_2\equiv 1 \mod p^{m-n_2}$.		
		\end{enumerate}    
		
%		\item\label{6} The following conditions hold: 
%		\begin{enumerate}[label=$(\alph*)$]     
%			\item   If $o_1=0$ then  $o'_1\le o'_2\le o'_1+o_2+n_1-n_2$. 
%			\item  If $o_2=0<o_1$ then $o'_1+\min(0,n_1-n_ 2  -o_1)\le o'_2\le o'_1+n_1-n_2$.			
%			\item  If $0<o_2<o_1$ then $o'_1\le o'_2\le o'_1+n_1-n_2$. 
%		\end{enumerate}	
		\item \label{7}$ 1\le u_1  \leq p^{a_1}$, where 
		$a_1=\min(o'_1,o_2+\min(n_1-n_2+o'_1-o'_2,0)).$
		\item \label{8}One of the following conditions holds:
		\begin{enumerate}[label=$(\alph*)$] 
			\item $ 1\le  u_2 \leq p^{a_2}$.
			\item $o_1o_2\neq 0$, $n_1-n_2+o_1'-o_2'=0<a_1$,  $1+p^{a_2}\leq u_2 \leq 2p^{a_2}$, and $u_1 \equiv 1\mod p$;  
		\end{enumerate}
		where  
		$$a_2=  \begin{cases}
		0, &\text{if } o_1=0; \\
		\min(o_1,o'_2,o'_2-o'_1+\max(0,o_1+n_2-n_1)), & \text{if } o_2=0<o_1; \\ \min(o_1-o_2,o'_2-o'_1), & \text{otherwise.} \end{cases}$$
		%		\end{align*}
	\end{enumerate}
\end{theorem}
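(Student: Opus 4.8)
The plan is to build a canonical presentation for each such $G$, read off the numerical invariants, and then prove that the two resulting assignments $[G]\mapsto\inv(G)$ and $I\mapsto[\G_I]$ invert one another; once this is established, $\inv(G)=\inv(H)$ will force $G\cong H$. The first step is to control the conjugation action of $G$ on $G'$. Since $p$ is odd and $G'=\GEN a$ is cyclic of order $p^m$, the group $G$ is regular, and $\Aut(G')\cong(\Z/p^m\Z)^\times$ has cyclic Sylow $p$-subgroup $1+p\Z/p^m\Z$. As $G/\Cen_G(G')$ is a $p$-group embedding into $\Aut(G')$, every $g\in G$ acts by $a^g=a^{r(g)}$ with $r(g)\equiv 1\bmod p$. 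This is exactly what makes $o(g)=m-v_p(r(g)-1)$ well-defined and what forces the fifth and sixth entries of $\inv(G)$ to be $1$ in odd characteristic; it also explains the footnote that these entries may be $-1$ only when $p=2$, where the $\{\pm1\}$-part of $\Aut(C_{2^m})$ intervenes.

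Next I would analyse base changes. Replacing a basis $(b_1,b_2)$ by $(b_1^{x_1}b_2^{y_1}a^{z_1},\,b_1^{x_2}b_2^{y_2}a^{z_2})$ and tracking how $r(b_i)$, the order $|b_i|$, and the power $b_i^{p^{n_i}}\in G'$ transform is the technical core. The identities \eqref{eq:conj-r} and \eqref{eq:double-conj}, together with the valuation formulas of \Cref{EseProp} and the bijectivity of $\Ese{r}{\cdot}$ from \Cref{EseBijective}, give exact control of these transformations. This is what lets one prove that the lexicographic optima defining $(o_1,o_2)$, $(o'_1,o'_2)$ and $(u_1,u_2)$ are attained, that $\B_r$ is nonempty, and that the resulting vector is genuinely independent of all choices, hence an isomorphism invariant.

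With the invariants in hand I would verify necessity of the listed arithmetic conditions and then sufficiency. The bounds $0\le o_i<m$ come from $|G'|=p^m$ and the order $p^{m-1}$ of the $p$-part of $\Aut(G')$; the inequalities relating $o'_i$ to $n_i,o_i$ come from comparing $|b_i|$ with the order of $b_i^{p^{n_i}}=a^{u_ip^{m-o'_i}}$ in $G'$, using $v_p(\Ese{r_i}{p^{n_i}})=n_i$. The subtler inequalities and the boundary congruences — such as $u_2\equiv 1\bmod p^{m-n_2}$ — arise by evaluating one relation in two ways (typically expanding $(b_2^{p^{n_2}})^{b_1}$ or a commutator such as $[b_1^{p^{n_1}},b_2]$ via \eqref{eq:double-conj}) and demanding consistency modulo $p^m$, where $\mathcal S$ and $\mathcal T$ produce precisely the exponents that must match. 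For the converse, for each admissible list $I$ I would exhibit a concrete metabelian model of $\G_I$ as an extension of $C_{p^m}\cong\GEN a$ by $C_{p^{n_1}}\times C_{p^{n_2}}$, check that it has order $p^{m+n_1+n_2}$ with $\G_I'=\GEN a$ of order exactly $p^m$ (so no relation forces extra collapse), and then compute $\inv(\G_I)=I$ directly from the defining relations, which closes the loop.

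The hard part will be the normalization and non-redundancy analysis underlying both necessity and the isomorphism statement: one must show that \emph{every} admissible list is realized and that \emph{distinct} admissible lists yield non-isomorphic groups. This reduces to proving that the refined case distinctions — the subcases in conditions \ref{4}, \ref{5} and \ref{8} of \Cref{Main}, and the exact ranges for $u_1,u_2$ dictated by $a_1,a_2$ — describe precisely the regime in which the power relation $b_i^{p^{n_i}}=a^{u_ip^{m-o'_i}}$ can be partly absorbed into a change of the complementary generator. Detecting exactly when such absorption is possible, and hence when two presentations are isomorphic, is the combinatorial heart of the classification and is where essentially all of the arithmetic of \Cref{EseProp} and \Cref{EseBijective} is consumed.
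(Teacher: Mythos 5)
Your proposal cannot be compared to a proof in this paper, because the paper does not prove \cref{Main} at all: the theorem is imported wholesale from \cite{OsnelDiegoAngel}, and the present paper only restates it for odd $p$, together with the auxiliary results it needs (\cref{EseProp}, \cref{EseBijective}, \cref{Fijando-rLema}, \cref{OPrima++}, and the nonemptiness of $\B_r$, all quoted from that reference). That said, your outline is a faithful roadmap of how such a classification goes, and it invokes exactly the right machinery: regularity for odd $p$, the cyclic Sylow $p$-subgroup $1+p\Z/p^m\Z$ of $\Aut(C_{p^m})$ (which indeed explains why the fifth and sixth entries are $1$ for $p$ odd), base-change bookkeeping via \eqref{eq:conj-r} and \eqref{eq:double-conj}, and the valuation arithmetic of $\mathcal{S}$ and $\mathcal{T}$.

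As a proof, however, there is a genuine gap, and you name it yourself: the entire content of the theorem is concentrated in the steps you defer. Concretely, (i) you never derive any of the inequality systems in conditions (3)--(6) --- e.g.\ the boundary congruence $u_2\equiv 1\bmod p^{m-n_2}$ in the case $2m-o_1-o'_2=n_2<m$, or the three-way case split governing the admissible range of $o'_2$ relative to $o'_1$ --- and these are not routine consequences of ``evaluating one relation in two ways'' but the output of a full change-of-basis analysis; (ii) verifying $\inv(\G_I)=I$ is \emph{not} a direct computation from the defining relations, because $\inv$ is defined by lexicographic optimization over \emph{all} bases of $\G_I$, so one must prove that the distinguished generators of the presentation actually realize the optima $(o_1,o_2)$, $(o'_1,o'_2)$, $(u_2,u_1)$ --- this is the same normalization problem you postpone, in disguise; and (iii) injectivity of $I\mapsto[\G_I]$ (equivalently, well-definedness of $\inv$ on isomorphism classes, including that all optimal bases give the same $u$-values within the ranges dictated by $a_1$ and $a_2$) is only asserted to be ``the combinatorial heart'' and is nowhere argued. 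Since points (i)--(iii) are precisely what distinguishes this classification from a routine exercise --- they occupy the bulk of \cite{OsnelDiegoAngel} --- the proposal is a correct plan of attack but does not constitute a proof of \cref{Main}.
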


\fbox{\begin{minipage}{6.2in}
\emph{In the remainder of the section, $G$ denotes a finite non-abelian $2$-generated $p$-group with cyclic commutator subgroup, with invariant vector
	$$\inv (G)=(p,m,n_1,n_2, 1,1,o_1,o_2,o_1',o_2',u_1,u_2). $$ 
and associated $r_1$ and $r_2$ as in \eqref{Erres}.}
\end{minipage}}
\vspace{5pt}

Thanks to \Cref{Main}\ref{2} and \Cref{EseProp}\ref{ValEse} we have 
\begin{equation}\label{vpr}
v_p(r_i-1) =m-o_i>0 \text{ for } i=1,2.
\end{equation}
The following two lemmas are Lemma~2.2 and Lemma~4.2 from \cite{OsnelDiegoAngel}. 

\begin{lemma}\label{Fijando-rLema}
	Let $b=(b_1,b_2)$ be a basis of $G$. Then $(o(b_1),o(b_2))=(o_1,o_2)$ if and only if  one of the following conditions holds:
	\begin{enumerate}[label=$(\arabic*)$] 
		\item\label{o10}  $o(b_1)=0$.  
		\item\label{o20}  $0=o(b_2)<o(b_1)$ and  $n_2<n_1$. 
		\item\label{osNo0}  $0< o(b_2) < o(b_1) < o(b_2) +n_1-n_2$. 
	\end{enumerate}
\end{lemma}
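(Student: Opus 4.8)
The plan is to transfer the whole statement into the abelian quotient $A=G/G'$ equipped with the conjugation action on $G'$. Writing $|G'|=p^m$ and $a^{g}=a^{r(g)}$ for $a\in G'$, the assignment $g\mapsto r(g)$ is a homomorphism $G\to 1+p\Z/p^m\Z$ with kernel $\Cen_G(G')$; by \Cref{EseProp}\ref{ValEse} it satisfies $o(g)=m-v_p(r(g)-1)$, so $o$ factors through $A$, and $p^{o(g)}$ is the order of $gG'$ in the cyclic group $G/\Cen_G(G')$, which I denote $C_{p^c}$ with $c=o(G)$. Since a pair $(b_1,b_2)$ is a basis of $G$ exactly when $(b_1G',b_2G')$ generates $A=\langle b_1G'\rangle\times\langle b_2G'\rangle$ with $|b_iG'|=p^{n_i}$ and $n_1\ge n_2$, the problem becomes: inside $A$ with the fixed surjection $\bar r\colon A\twoheadrightarrow C_{p^c}$, minimize $(o(b_1),o(b_2))$ lexicographically, equivalently maximize $(v_p(r(b_1)-1),v_p(r(b_2)-1))$; as $b_1G',b_2G'$ generate $C_{p^c}$, one always has $\max(o(b_1),o(b_2))=c$.

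For the forward implication I would record the valuation rules for $o$ coming from \Cref{EseProp}\ref{ValEse}, namely $o(g^{k})=\max(0,o(g)-v_p(k))$ and, for $g,h$, that $o(gh)\le\max(o(g),o(h))$ with equality when $o(g)\ne o(h)$, while if $o(g)=o(h)>0$ then a suitable power $gh^{t}$ has strictly smaller $o$-value. These let me use the unimodular basis moves $b_1\mapsto b_1b_2^{k}$, which preserves a basis for every $k$ because $n_1\ge n_2$, and $b_2\mapsto b_2b_1^{k}$, which preserves a basis precisely when $v_p(k)\ge n_1-n_2$, together with the swap $(b_1,b_2)\mapsto(b_2,b_1)$ available only when $n_1=n_2$. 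Assuming $(o(b_1),o(b_2))=(o_1,o_2)$ is minimal, the first move kills the configurations $0<o(b_1)=o(b_2)$ and $0<o(b_1)<o(b_2)$ (both allow lowering $o(b_1)$), forcing $o_1\in\{0,c\}$ with $o_2=c$ when $o_1=0$ (case~(1)); the swap then shows that $o_1=c>0$ forces $n_2<n_1$; and the second move, whose correction term has $v_p$ at least $(n_1-n_2)+(m-c)$, can lower $o(b_2)$ exactly when $o(b_2)\le c-(n_1-n_2)$, so minimality gives either $o_2=0$ (case~(2)) or $o_2>0$ with $o_1=c<o_2+(n_1-n_2)$ (case~(3)).

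For the converse I would observe that the same analysis shows the minimum pair is uniquely $(0,c)$, $(c,0)$, or $(c,o_2)$ with $0<o_2<c$, and that when $o_1=c$ the second coordinate is in fact constant across all bases with $o(b_1)=c$ (once $o(b_2)>c-(n_1-n_2)$ the dominant term of $r(b_2)-1$ cannot be cancelled by the correction); hence any basis satisfying (3) automatically realizes the minimum, and any basis satisfying (1) realizes it trivially, since $o(b_1)=0$ witnesses $o_1=0$. The only remaining point is the mutual exclusivity of (1) with (2)--(3): a basis satisfying (2) or (3) must be prevented from being the minimum whenever some basis has $o(b_1)=0$. I expect this to be the main obstacle, and I would settle it by the structural criterion that $o_1=0$ if and only if $\bar r$ restricted to the subgroup $\Omega$ of elements of order dividing $p^{n_2}$ is surjective onto $C_{p^c}$, equivalently $\Cen_G(G')/G'$ contains a primitive element of order $p^{n_1}$; when this fails, every primitive element of order $p^{n_1}$ has $o$-value $c$, forcing $o_1=c$, and this is exactly what separates cases (2)--(3) from (1) and pins down the threshold $n_1-n_2$ and the role of the hypothesis $n_1\ge n_2$.
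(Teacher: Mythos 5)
The paper contains no proof of \cref{Fijando-rLema} to compare yours against: it is imported verbatim as Lemma~2.2 of \cite{OsnelDiegoAngel}. Judged on its own merits, your argument is correct and self-contained. The reduction to the pair $(A,\bar r)$ with $A=G/G'$ and $\bar r\colon A\to C_{p^c}$ is legitimate ($r$ is a homomorphism with kernel $\Cen_G(G')$, whose image is a $p$-subgroup of the units modulo $p^m$ and hence cyclic for $p$ odd, and $o$ depends only on the coset modulo $G'$); your valuation rules follow from \cref{EseProp}\ref{ValEse}; the three basis moves suffice; and the threshold computation---the correction term in $b_2\mapsto b_2b_1^k$ has $p$-adic valuation at least $(n_1-n_2)+(m-c)$, so $o(b_2)$ can be lowered exactly when $o(b_2)\le c-(n_1-n_2)$---is precisely what separates cases $(2)$ and $(3)$ and produces the bound $o_1<o_2+n_1-n_2$, while the swap move correctly handles $n_1=n_2$, where $o_1=0$ always. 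Your exclusivity criterion is also right: for $n_1>n_2$, the kernel of $\bar r$ contains an element of order $p^{n_1}$ if and only if some element of $\Omega_{n_2}(A)$ has $o$-value $c$, and when this fails every element of order $p^{n_1}$ has $o$-value $c$, forcing $o_1=c$.

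Two points deserve tightening, though neither is a fatal gap. First, the clause ``the second coordinate is in fact constant across all bases with $o(b_1)=c$'' is false as literally stated: when $o_2=0$ and $c>n_1-n_2$, the values $o(b_2)$ range over $\{0,1,\dots,c-(n_1-n_2)\}$. What is true---and what your parenthetical actually contains---is the conditional statement that if one basis has $o(b_2)>c-(n_1-n_2)$ then \emph{every} basis has the same pair $(c,o(b_2))$, since neither coordinate's dominant term can then be cancelled; this conditional version is also what rules out the coexistence of $(2)$-bases and $(3)$-bases, a consistency check worth recording. Second, the step that a basis satisfying $(2)$ or $(3)$ forces your $\Omega$-criterion to fail (so that no basis with $o(b_1)=0$ can exist alongside it) is left implicit; it is the one-line estimate that every $w\in\Omega_{n_2}(A)$ has the form $b_1^{kp^{n_1-n_2}}b_2^j$, whence $o(w)\le\max\bigl(c-(n_1-n_2),\,o(b_2)\bigr)<c$ under either hypothesis. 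With these two sentences made explicit, your proof is complete.
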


\begin{lemma}\label{OPrima++} 
	 Let   $b\in \B_r$. 
	Then $o'(b)=(o'_1,o'_2)$ if and only if the following conditions hold: 
	\begin{enumerate}[label=$(\arabic*)$] 
		\item If $o_1 =0$ then  $o'_1(b)\le o'_2(b)\le o'_1(b)+o_2 +n_1-n_2$.
		
		\item If  $o_2 =0<o_1$ then   $o'_1(b)+\min(0,n_1-n_2-o_1 )\le o'_2(b)\le o'_1(b)+n_1-n_2$.
		\item If $o_1 o_2 \ne 0$ then $o'_1(b)\le o'_2(b)\le o'_1(b)+n_1-n_2 $.
	\end{enumerate}
\end{lemma}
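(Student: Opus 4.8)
The statement characterizes which bases $b\in\B_r$ realize the lexicographically maximal pair $(o_1',o_2')=\max_{\lex}\{o'(c):c\in\B_r\}$, so the plan is to treat the two implications separately. The forward implication is essentially a bookkeeping check: if $o'(b)=(o_1',o_2')$ then, since these are the genuine invariants of $G=\mathcal{G}_I$, \Cref{Main}\ref{4} guarantees that $(o_1',o_2')$ satisfies exactly the inequalities in (1)--(3), the three cases there matching the three cases $o_1=0$, $o_2=0<o_1$, $o_1o_2\neq0$ of the lemma (note that in the middle case the existence of such a group already forces $n_2<n_1$). First I would therefore verify that the case split of \Cref{Main}\ref{4} agrees, via \eqref{vpr} and \Cref{Fijando-rLema}, with the one in the lemma.

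For the reverse implication the plan is to describe how $(o_1'(c),o_2'(c))$ moves as $c$ runs over $\B_r$ and to show that the region cut out by (1)--(3) is precisely the lexicographically maximal locus. Fixing the reference basis $b$ with $a=[b_2,b_1]$, I would write a competing basis as $c_i=b_1^{x_i}b_2^{y_i}a^{z_i}$; membership $c\in\B_r$ means $r(c_i)\equiv r_i\bmod p^m$, which through $a^{b_1^{x}b_2^{y}}=a^{r_1^{x}r_2^{y}}$ and \eqref{vpr} becomes a set of congruences on the exponents $x_i,y_i$ whose shape depends on the relative sizes of $o_1$ and $o_2$ --- this is the source of the three cases. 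Next I would expand $c_i^{p^{n_i}}$ using the power identities \eqref{eq:conj-r} and \eqref{eq:double-conj}, so that $o_i'(c)=m-v_p(\text{exponent of }a\text{ in }c_i^{p^{n_i}})$ becomes computable; here \Cref{EseProp}\ref{ValEse} pins down the valuation of the $\mathcal{S}$-terms and \Cref{EseProp}\ref{ValEse2} annihilates the $\mathcal{T}$-correction modulo $p^{n_i}$, isolating the leading behaviour of each $o_i'(c)$.

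The decisive point, and the step I expect to be the main obstacle, is the trade-off between the two coordinates: I would show that lowering $o_1'(c)$ below its maximal value $o_1'$ necessarily pushes $o_2'(c)$ strictly above the upper bound $o_1'(c)+(n_1-n_2)$ (and its analogues in the other two cases), so that a non-maximal basis always violates one of the inequalities, whereas a basis satisfying all of them cannot be lexicographically beaten and hence realizes $(o_1',o_2')$. Pinning this down requires weighing the mixed term $z_i\,\Ese{r_i}{p^{n_i}}$ against $u_i(b)\,p^{m-o_i'(b)}$ with care, together with the separate handling of the three cases governed by the vanishing of $o_1$ and $o_2$; the boundary case $o_2=0<o_1$ (where $n_2<n_1$ is used) and the balanced range $0<o_2<o_1<o_2+(n_1-n_2)$ are the most delicate. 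Since all of this runs parallel to \Cref{Fijando-rLema} for the $o$-invariants, I would organise the argument as an upgrade of that matrix-of-exponents computation, now tracking the $G'$-component that defines the $o_i'$.
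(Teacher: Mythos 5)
There is nothing in this paper to compare your attempt against: the paper does not prove this statement but imports it verbatim, as it says just above the lemma, as Lemma~4.2 of \cite{OsnelDiegoAngel}. Judged on its own terms, your forward implication is fine \emph{in the ambient logic of this paper}: \Cref{Main} is likewise quoted as a black box, its condition \ref{4} matches the three cases of the lemma exactly, and the extra assertions there ($n_2<n_1$ in case (b), $0<o_2<o_1<o_2+n_1-n_2$ in case (c)) hold automatically for the invariants of $G$. Be aware, however, that in the source the deduction runs the other way --- the lemma is an ingredient in establishing the classification --- so as a self-contained proof this direction would be circular; it is legitimate here only because \Cref{Main} is treated as known.

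The genuine gap is in your ``decisive point''. The reverse implication is the contrapositive claim that every $c\in\B_r$ with $o'(c)<_{\lex}(o'_1,o'_2)$ violates one of the inequalities, and there are two ways to be non-maximal: (i) $o'_1(c)<o'_1$, and (ii) $o'_1(c)=o'_1$ but $o'_2(c)<o'_2$. Your trade-off mechanism --- lowering $o'_1(c)$ pushes $o'_2(c)$ above the \emph{upper} bound --- addresses only (i). In case (ii) the upper bounds hold a fortiori, since for instance $o'_2(c)<o'_2\le o'_1+o_2+n_1-n_2=o'_1(c)+o_2+n_1-n_2$ when $o_1=0$, and similarly in the other two cases; so there the violation can only come from the \emph{lower} bounds $o'_1(c)\le o'_2(c)$ (cases (1) and (3)), respectively $o'_1(c)+\min(0,n_1-n_2-o_1)\le o'_2(c)$ (case (2)), which your outline never engages. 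What is needed is a gap statement about the achievable spectrum: with the first coordinate maximal, any drop of $o'_2(c)$ --- which in the valuation bookkeeping can only occur through cancellation of the leading terms $xu_1p^{m-o'_1}$ and $yu_2p^{m-o'_2}$ contributed by $b_1^{p^{n_1}}$ and $b_2^{p^{n_2}}$, hence essentially only when $m-o'_1=m-o'_2$, subject to the $\B_r$-membership congruences restricting the admissible exponents $x$ --- necessarily lands \emph{strictly below} the relevant lower bound. This is precisely where the correction term $\min(0,n_1-n_2-o_1)$ in case (2) originates. Without this half of the argument, your sentence ``a non-maximal basis always violates one of the inequalities'' does not follow from the mechanism you propose to prove, so the characterization remains unestablished even granting your (plausible but also unverified) claim in case (i).
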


For the following result, recall  from the introduction  that 
\[
O(G)=\min_{\lex} \{ (|b_1\Cen_G(G')|, |b_2\Cen_G(G')|, -|b_1|, -|b_2|) \mid (b_1,b_2)\in \mathcal{B} \}.
\]

\begin{lemma}\label{Ooo'}
The following equality holds: $\OO(G)=(p^{o_1},p^{o_2},-p^{n_1+o'_1},-p^{n_2+o'_2})$. 
\end{lemma}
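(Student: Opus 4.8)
The plan is to translate the lexicographic optimisation defining $\OO(G)$ into one over the integer exponents $o(b_i)$ and $o'_i(b)$, and then to resolve it one coordinate at a time. By the very definitions of $o(\cdot)$ and $o'(\cdot)$ one has $|b_i\Cen_G(G')|=p^{o(b_i)}$ and $|b_i|=p^{n_i+o'_i(b)}$ for every basis $b=(b_1,b_2)\in\B$, so that
\[
\OO(G)=\min_{\lex}\bigl\{\,(p^{o(b_1)},p^{o(b_2)},-p^{n_1+o'_1(b)},-p^{n_2+o'_2(b)}) : b\in\B\,\bigr\}.
\]
Since $t\mapsto p^t$ is strictly increasing, the lexicographic minimum of the first two coordinates is attained exactly when $(o(b_1),o(b_2))$ is lexicographically minimal, which by definition equals $(o_1,o_2)$. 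This identifies the first two entries of $\OO(G)$ as $p^{o_1}$ and $p^{o_2}$ and reduces the computation of the last two entries to a maximisation (the minus signs turn the minimisation into a maximisation) of $(o'_1(b),o'_2(b))$ over the set $\B_o:=\{b\in\B : (o(b_1),o(b_2))=(o_1,o_2)\}$ of bases realising this minimum.

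It thus remains to show that $\max_{\lex}\{o'(b) : b\in\B_o\}=(o'_1,o'_2)$, where by definition $(o'_1,o'_2)=\max_{\lex}\{o'(b):b\in\B_r\}$. First I would record the inclusion $\B_r\subseteq\B_o$: if $b\in\B_r$ then $r(b_i)\equiv r_i\bmod p^m$, so $v_p(r(b_i)-1)=v_p(r_i-1)=m-o_i$ by \eqref{vpr}, whence $o(b_i)=o_i$. This already gives $\max_{\lex}\{o'(b):b\in\B_o\}\ge_{\lex}(o'_1,o'_2)$.

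The crux --- and the step I expect to be the main obstacle --- is the reverse inequality, that no basis lying in $\B_o\setminus\B_r$ can exceed $(o'_1,o'_2)$. My plan here is a power-replacement argument producing, for each $b\in\B_o$, a basis $b'\in\B_r$ with $o'(b')=o'(b)$. Given $b=(b_1,b_2)\in\B_o$, I replace each $b_i$ by $b_i^{c_i}$ for a suitable $c_i$ coprime to $p$. Such a replacement keeps $(b_1^{c_1},b_2^{c_2})$ a basis, since $\GEN{b_i^{c_i}G'}=\GEN{b_iG'}$ still has order $p^{n_i}$, and preserves the orders $|b_i^{c_i}|=|b_i|$, hence leaves each $o'_i$ unchanged. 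The only thing to arrange is $r(b_i^{c_i})\equiv r(b_i)^{c_i}\equiv r_i\bmod p^m$: since $b\in\B_o$ gives $v_p(r(b_i)-1)=m-o_i$, \Cref{EseProp}\ref{ValEse} yields $\ord_{p^m}(r(b_i))=p^{o_i}=\ord_{p^m}(r_i)$, so (as $p$ is odd) $r(b_i)$ and $r_i$ both generate the unique subgroup of order $p^{o_i}$ of the cyclic unit group $(\Z/p^m\Z)^\times$; in particular $r_i$ is a power of $r(b_i)$ with exponent $c_i$ coprime to $p$. With this choice $b'=(b_1^{c_1},b_2^{c_2})\in\B_r$ and $o'(b')=o'(b)$, so the sets $\{o'(b):b\in\B_o\}$ and $\{o'(b):b\in\B_r\}$ coincide and hence so do their lexicographic maxima.

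Assembling the pieces, maximising $(o'_1(b),o'_2(b))$ over $\B_o$ returns $(o'_1,o'_2)$, so the last two coordinates of $\OO(G)$ are $-p^{n_1+o'_1}$ and $-p^{n_2+o'_2}$; combined with the first two this gives $\OO(G)=(p^{o_1},p^{o_2},-p^{n_1+o'_1},-p^{n_2+o'_2})$, as desired.
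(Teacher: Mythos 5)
Your proposal is correct and takes essentially the same route as the paper's proof: both decompose the lexicographic optimisation (first two coordinates pinned down by the definition of $(o_1,o_2)$, the last two becoming a maximisation of $o'(b)$), and both hinge on the same power-replacement step $b_i\mapsto b_i^{c_i}$ with $c_i$ coprime to $p$, justified by the cyclicity of $\Aut(G')$, which carries any basis realising $(o_1,o_2)$ into $\B_r$ without changing element orders. The only cosmetic difference is that you show the full sets of $o'$-values over $\B_o$ and $\B_r$ coincide, whereas the paper applies the replacement to a single basis attaining $\OO(G)$.
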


\begin{proof}
	For every $g\in G$ let $r(g)$ be the unique integer $2\le r(g)\le p^m+1$ such that $a^g=a^{r(g)}$ for every $a$ in $G'$. From \Cref{EseProp}\ref{ValEse} it follows that 
	\begin{equation}\label{o=Order}
	p^{o(g)}=|g\Cen_G(G')|=p^{m-v_p(r(g)-1)}=o_{p^m}(r(g)) \textup{ for all } g\in G.
	\end{equation}
	In particular, if $b\in \B_r$ and $i=1,2$, then $o(b_i)=o_i$.
	Thus the first two entries of $\OO(G)$ are $p^{o_1}$ and $p^{o_2}$. 
 To deal with the remaining two entries,  fix two bases $b=(b_1,b_2)$ and $b'=(b'_1,b'_2)$ of $G$ with $b'\in \B_r$ and  such that $\OO(G)=(|b_1\Cen_G(G')|,|b_2\Cen_G(G')|,-|b_1|,-|b_2|)$  and $o'(b')=(o'_1,o'_2)$. 
 In particular, we have  $|b_i\Cen_G(G')|=|b'_i\Cen_G(G')|=p^{o_i}$. 
	Moreover, $|b_i|=p^{n_i+o'_i(b)}$ and $|b'_i|=p^{n_i+o'_i}$.
	Thus $(-p^{n_1+o'_1(b)},-p^{n_2+o'_2(b)})=(-|b_1|,-|b_2|)\le_{\lex} (-p^{n_1+o'_1},-p^{n_2+o'_2})$ or equivalently $o'(b)\ge_{\lex} (o'_1,o'_2)$. 
	On the other hand, the two automorphisms of $G'$ given by $a\mapsto a^{b_i}$ and $a\mapsto a^{b'_i}$ have order $p^{o_i}$. 
	Since $\Aut(G')$ is cyclic, there exist integers $x_1$ and $x_2$, both coprime to $p$, such that $b''=(b_1^{x_1},b_2^{x_2})\in \B_r$. 
	Thus $p^{n_i+o'_i(b'')} = |b_i^{x_i}|=|b_i|=p^{n_i+o_i'(b)}$ and hence 
	$o'(b)=o'(b'')\le_{\lex} (o'_1,o'_2)$.
	We conclude that $o'(b)=(o'_1,o'_2)$ and hence $\OO(G)=(p^{o_1},p^{o_2},-p^{n_1+o'_1},-p^{n_2+o'_2})$.
\end{proof}

\fbox{\begin{minipage}{6.2in}
\emph{In the remainder of the section let $b=(b_1,b_2)\in\B_r$ be a fixed basis of $G$ such that $o'(b)=(o'_1,o'_2)$ and denote $a=[b_2,b_1]$.} 
\end{minipage}}
\vspace{5pt}

Then the following hold:
\begin{equation}\label{Ordersbi}
	|a|=p^m, \quad |b_iG'|=p^{n_i}, \quad |b_i\Cen_G(G')|=p^{o_i} \quad |b_i|=p^{n_i+o'_i} \qand a^{b_i}=a^{r_i}.
\end{equation}
In particular, every element of $G$ is of the form $b_1^xb_2^ya^z$ for some integers $x,y,z$. 
Moreover, it follows from   \eqref{eq:conj-r} and \eqref{eq:double-conj}
that, for every non-negative integer $e$, one has 
\begin{eqnarray}
	\label{Exp}	(b_1^x b_2^y a^z) ^{p^e} &=& b_1^{xp^e} b_2^{yp^e} a^{ \Ese{r_1}{x} \Ese{r_2}{y} \Te{r_1^x,r_2^y}{p^e} +z\Ese{r_1^x r_2^y }{p^e}  },  \\
	\label{Comma}	[a^e,b_1^xb_2^ya^z]&=&a^{e(r_1^xr_2^y-1)},  \\  
	\label{Commb1}	[b_1^x b_2^y a^z,b_1]&=&  a^{  \Ese{r_2}{y }   + z (r_1-1)  }  , \\
	\label{Commb2}	[b_1^x b_2^y a^z,b_2]&=&  a^{ -\Ese{r_1}{x}r_2^{y}   +  z(r_2-1)}.
\end{eqnarray} 
The next lemma describes some characteristic features of $G$.

\begin{lemma}\label{Characteristic} The following statements hold: 
\begin{enumerate}[label=$(\arabic*)$]
	\item\label{CenterComm} $\ZZ(G)\cap G' = \GEN{a^{p^{\max(o_1,o_2)}}}$.
	\item\label{exponente} $\exp(G) = p^{\max(n_1+o_1', n_2+o_2')}$.
	\item\label{LCSpodd} If $i\ge 2$ then $\gamma_i(G)= \GEN{ a^{p^{ (i-2)(m-\max(o_1,o_2))}}}$ and the class of $G$ is  $1+\E{\frac{m}{m-\max(o_1,o_2)}}$. 
\end{enumerate}
\end{lemma}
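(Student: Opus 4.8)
The plan is to treat the three statements separately, in each case reducing to the explicit commutator and power formulas \eqref{Exp}--\eqref{Commb2} together with the divisibility properties recorded in \Cref{EseProp}. Throughout I would write $c=\max(o_1,o_2)$ and recall from \eqref{vpr} that $v_p(r_i-1)=m-o_i>0$, so that each $r_i\equiv 1\bmod p$ and $m-c=\min(v_p(r_1-1),v_p(r_2-1))$; since $o_i<m$ by \Cref{Main}\ref{2} we have $m-c\ge 1$, which is what makes the formula in \ref{LCSpodd} well defined.

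For \ref{CenterComm}, since $G'=\GEN a$ is abelian, an element $a^e$ lies in $\ZZ(G)$ if and only if it commutes with the two generators $b_1$ and $b_2$. Specialising \eqref{Comma} gives $[a^e,b_i]=a^{e(r_i-1)}$, which is trivial exactly when $p^m\mid e(r_i-1)$, i.e.\ when $v_p(e)\ge o_i$. Intersecting the two conditions, $a^e\in\ZZ(G)$ holds iff $v_p(e)\ge c$, whence $\ZZ(G)\cap G'=\GEN{a^{p^{c}}}$, as claimed.

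For \ref{exponente}, the lower bound is immediate from \eqref{Ordersbi}, since $|b_i|=p^{n_i+o_i'}$ forces $\exp(G)\ge p^{\max(n_1+o_1',n_2+o_2')}$. For the reverse inequality I would set $E=\max(n_1+o_1',n_2+o_2')$ and apply \eqref{Exp} to an arbitrary $g=b_1^xb_2^ya^z$ with $e=E$. The factors $b_1^{xp^E}$ and $b_2^{yp^E}$ then vanish because $p^E\ge|b_i|$, so it remains to check that the exponent of $a$ is divisible by $p^m$. Here I would use that $E\ge n_1\ge m$ by \Cref{Main}\ref{5}, that $\Te{r_1^x,r_2^y}{p^E}\equiv 0\bmod p^E$ by \Cref{EseProp}\ref{ValEse2}, and that $v_p(\Ese{r_1^xr_2^y}{p^E})=E$ by \Cref{EseProp}\ref{ValEse}; thus each of the two summands of the $a$-exponent in \eqref{Exp} is divisible by $p^E$ and hence by $p^m$. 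Therefore $g^{p^E}=1$ for every $g\in G$, giving $\exp(G)=p^E$. For \ref{LCSpodd} I would argue by induction on $i$, the base case $i=2$ being $\gamma_2(G)=G'=\GEN{a}=\GEN{a^{p^0}}$. Assuming $\gamma_i(G)=\GEN{a^{p^{(i-2)(m-c)}}}$, the subgroup $\gamma_{i+1}(G)=[\gamma_i(G),G]$ is contained in the cyclic group $\GEN a$ and is therefore generated by the commutators $[a^{p^{(i-2)(m-c)}},b_1]$ and $[a^{p^{(i-2)(m-c)}},b_2]$, which by \eqref{Comma} equal $a^{p^{(i-2)(m-c)}(r_1-1)}$ and $a^{p^{(i-2)(m-c)}(r_2-1)}$. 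Comparing $p$-adic valuations via $v_p(r_i-1)=m-o_i$ yields $\gamma_{i+1}(G)=\GEN{a^{p^{(i-1)(m-c)}}}$, with the convention $a^{p^j}=1$ once $j\ge m$, completing the induction. Finally $\gamma_i(G)\neq 1$ precisely when $(i-2)(m-c)<m$, i.e.\ $i<2+\tfrac{m}{m-c}$, so the class is the largest such integer, namely $1+\E{\tfrac{m}{m-c}}$.

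The only genuinely delicate point is the upper bound in \ref{exponente}: one must verify that \emph{both} summands in the $a$-exponent produced by \eqref{Exp} are killed modulo $p^m$, and this is where the two valuation estimates of \Cref{EseProp} and the inequality $E\ge m$ (from $m\le n_1$) are essential. The rest are routine commutator computations carried out inside the cyclic group $G'$.
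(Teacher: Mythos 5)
Your proposal is correct, and for parts \ref{CenterComm} and \ref{LCSpodd} it coincides with the paper's own argument: the same specialization of \eqref{Comma} to the generators $b_1,b_2$ for the first part, and the same induction with the valuation count $f+m-\max(o_1,o_2)$ for the third, including the same (implicit) step that commutators $[a^e,g]$ with arbitrary $g=b_1^xb_2^ya^z$ contribute nothing beyond $[a^{p^f},b_1]$ and $[a^{p^f},b_2]$, which rests on $v_p(r_1^xr_2^y-1)\ge m-\max(o_1,o_2)$; the paper glosses this at exactly the same level as you do, and your reading of the nilpotency class formula (largest $i$ with $(i-2)(m-\max(o_1,o_2))<m$ equals $1+\E{\frac{m}{m-\max(o_1,o_2)}}$ whether or not $\frac{m}{m-\max(o_1,o_2)}$ is an integer) is right. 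The one genuine divergence is in part \ref{exponente}: the paper shows $\mho_e(G)=1$ by first noting $e\ge m$ via \Cref{Main}\ref{5}, so that $\mho_e(G')=1$, and then invoking \emph{regularity} of $G$, under which $p^e$-th powering induces a homomorphism $G/G'\to G$, making $\mho_e(G)$ generated by $b_1^{p^e}$ and $b_2^{p^e}$, both trivial by \eqref{Ordersbi}. You instead expand $(b_1^xb_2^ya^z)^{p^E}$ directly with \eqref{Exp} and kill the $a$-exponent using $\Te{r_1^x,r_2^y}{p^E}\equiv 0 \bmod p^E$ from \Cref{EseProp}\ref{ValEse2} and $v_p\left(\Ese{r_1^xr_2^y}{p^E}\right)=E\ge m$ from \Cref{EseProp}\ref{ValEse} together with $m\le n_1$ from \Cref{Main}\ref{5}. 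Both routes are valid and of comparable length; the paper's is slicker once regularity is available, while yours is self-contained at the level of the identities \eqref{eq:conj-r} and \eqref{eq:double-conj} and never appeals to regularity --- it is essentially the computation the paper itself carries out later for $\exp(\Cen_G(G'))$ in \Cref{G/L-N}\ref{ExpCGG'o1=0}--\ref{ExpCGG'o2=0}. No gaps.
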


\begin{proof} 
\ref{CenterComm}
Let $w$ be a non-negative integer. As $v_p(r_i-1)=m-o_i$ and $a^{b_i}=a^{r_i}$ , we have that $a^{p^w}\in \ZZ(G)$ if and only if, for each $i\in\{1,2\}$, one has $w+m-o_i\ge m$. Then $\ZZ(G)\cap G'=\GEN{a^{p^{\max(o_1,o_2)}}}$.  

%\ref{CenterComm}
%Let $w$ be a non-negative integer. Then 
%$[G,\gamma_2(G)^{p^w}]=[G^{p^w},\gamma_2(G)]$
%and so $\ZZ(G)\cap \gamma_2(G)=\gamma_2(G)^{p^w}$ if and only if $w$ is the smallest integer satisfying $G^{p^w}\subseteq \Cen_G(\gamma_2(G))$, i.e.\ $w=\max(o_1,o_2)$. 

\ref{exponente} 
Let $e=\max(n_1+o'_1,n_2+o'_2)$. By \eqref{Ordersbi}, we have that $\exp(G)\geq p^e$ so we show that   $\mho_e(G)=1$. To this end, note that $e\geq m$ as a consequence of \Cref{Main}\ref{5} and thus   $\mho_e(G')=1$. Now regularity yields that $p^e$-th powering induces a homomorphism $G/G'\rightarrow G$ and so, as a consequence of \eqref{Ordersbi}, we get $\mho_e(G)=1$.

\ref{LCSpodd} We work by induction on $i$ and, as the base case $i=2$ is clear, we assume that $i>2$ and the claim holds for $i-1$. In other words, write $f=(i-3)(m-\max(o_1,o_2))$ so that $\gamma_{i-1}(G)=\GEN{a^{p^f}}$. It follows then from \eqref{Comma} that 
\[
\gamma_i(G)=\GEN{[a^{p^f},b_1], [a^{p^f},b_2]}=\GEN{a^{p^f(r_1-1)},a^{p^f(r_2-1)}}=\GEN{a^{p^f\min(r_1-1,r_2-1)}}.
\]
We conclude by computing $v_p(p^f\min(r_1-1,r_2-1))=f+m-\max(o_1,o_2)=(i-2)(m-\max(o_1,o_2))$.
\end{proof}

Since each $\Ese{r_i}{-}$ induces a bijection $\Z/p^m\Z\rightarrow \Z/p^m\Z$ (see \Cref{EseBijective}) there are unique integers  $1\le \delta_1\le p^{o_1}$ and $1\le \delta_2\le p^{o_2}$  satisfying the following congruences:
\begin{eqnarray} 
\Ese{r_2}{\delta_1p^{m-o_1}}&\equiv & 1-r_1\mod p^m \label{EqDelta1},\\
\Ese{r_1}{\delta_2p^{m-o_2}} r_2^{\delta_1p^{m-o_1}} &\equiv&  r_2-1 \mod p^m.\label{EqDelta2}
\end{eqnarray}
Moreover, 
%Fix $\delta_1$ and $\delta_2$ and note that
 \eqref{Erres} and \Cref{EseProp}\ref{ValEse} yield that $p$ does not divide $\delta_1\delta_2$.  
\begin{lemma}
The following hold:
\begin{equation}\label{deltas}
\begin{cases} \delta_1=\delta_2=1, & \text{if } o_1=0; \\ \delta_1+\delta_2\equiv 0 \mod p^{o_2}, & \text{otherwise}.\end{cases}
\end{equation}
\end{lemma}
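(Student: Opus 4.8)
The plan is to separate the two lines of the statement according to whether $o_1=0$, treating the substantive case $o_1>0$ by a telescoping identity that combines \eqref{EqDelta1} and \eqref{EqDelta2}.

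First I would dispose of the case $o_1=0$. Here the constraint $1\le\delta_1\le p^{o_1}=1$ already forces $\delta_1=1$, and it remains to see that $\delta_2=1$. If $o_2=0$ this is again forced by $1\le\delta_2\le p^{o_2}=1$. If $o_2>0$, then $r_1=1+p^m\equiv 1\bmod p^m$, so $\Ese{r_1}{\delta_2p^{m-o_2}}\equiv\delta_2p^{m-o_2}\bmod p^m$, while $\ord_{p^m}(r_2)=p^{o_2}$ by \Cref{EseProp}\ref{ValEse} gives $r_2^{p^m}\equiv1\bmod p^m$. Substituting $\delta_1=1$ into \eqref{EqDelta2} and using $r_2-1=p^{m-o_2}$ then reduces the congruence to $\delta_2p^{m-o_2}\equiv p^{m-o_2}\bmod p^m$, whence $\delta_2\equiv1\bmod p^{o_2}$ and so $\delta_2=1$.

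Now assume $o_1>0$, so that $o_2<o_1$ by \Cref{Main}\ref{4} and \eqref{Erres} reads $r_2=r_1^{p^{o_1-o_2}}$. Writing $c=m-o_1\ge1$ and $d=o_1-o_2\ge1$, we have $r_1=1+p^c$, the geometric-series identity $r_2-1=(r_1-1)\Ese{r_1}{p^d}=p^c\Ese{r_1}{p^d}$, and $r_2^{\delta_1p^{m-o_1}}=r_1^{\delta_1p^{m-o_2}}$. The key step is to clear the denominators hidden in the $\mathcal S$-operators, via $\Ese{s}{n}=(s^n-1)/(s-1)$, so as to put both defining congruences into the shape ``$r_1^{\bullet}-1\equiv\cdots$''. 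Multiplying \eqref{EqDelta1} by $r_2-1$ (of valuation $m-o_2$) upgrades it to
\[
r_1^{\delta_1p^{m-o_2}}-1\equiv -p^{2c}\,\Ese{r_1}{p^d}\pmod{p^{2m-o_2}},
\]
while multiplying \eqref{EqDelta2} by $p^c=r_1-1$ turns it into
\[
r_1^{(\delta_1+\delta_2)p^{m-o_2}}-r_1^{\delta_1p^{m-o_2}}\equiv p^{2c}\,\Ese{r_1}{p^d}\pmod{p^{2m-o_1}}.
\]

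Finally I would add these two congruences. Their right-hand sides are opposite and cancel, and their left-hand sides telescope, leaving $r_1^{(\delta_1+\delta_2)p^{m-o_2}}\equiv1\bmod p^{2m-o_1}$; here the common modulus is $p^{2m-o_1}$ precisely because $o_1>o_2$. Since $v_p(r_1-1)=m-o_1$, \Cref{EseProp}\ref{ValEse} gives $\ord_{p^{2m-o_1}}(r_1)=p^{(2m-o_1)-(m-o_1)}=p^m$, so the displayed congruence is equivalent to $p^m\mid(\delta_1+\delta_2)p^{m-o_2}$, that is, to $\delta_1+\delta_2\equiv0\bmod p^{o_2}$, as claimed. (As a sanity check, these two congruences encode exactly that $b_1^{\delta_2p^{m-o_2}}b_2^{\delta_1p^{m-o_1}}a$ lies in $\ZZ(G)$.) I expect the only delicate point to be the bookkeeping of $p$-adic moduli when clearing denominators --- multiplying a congruence modulo $p^m$ by an element of valuation $v$ yields one only modulo $p^{m+v}$ --- and the whole reason for first rewriting both identities in ``$r_1^{\bullet}-1$'' form is that their error terms $\pm p^{2c}\Ese{r_1}{p^d}$ then become identical and cancel, sidestepping an otherwise delicate analysis of the sign of $o_1+o_2-m$.
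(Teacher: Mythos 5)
Your proof is correct and takes essentially the same route as the paper: the $o_1=0$ case is the identical computation, and in the $o_1>0$ case multiplying \eqref{EqDelta1} by $r_2-1$ and \eqref{EqDelta2} by $r_1-1$ and adding is precisely the paper's telescoping expansion of $r_1^{(\delta_1+\delta_2)p^{m-o_2}}-1$ via $(s-1)\Ese{s}{n}=s^n-1$ (the paper just tracks the error terms as $\lambda p^m,\mu p^m$ instead of clearing denominators). Both arguments then conclude identically from $\ord_{p^{2m-o_1}}(r_1)=p^m$, with matching moduli bookkeeping.
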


\begin{proof}
Assume first that $o_1=0$, implying that $\delta_1=1$, $r_1=1+p^m$ and $r_2=1+p^{m-o_2}$.  Then \cref{EseProp}\ref{ValEse}-\ref{EseCero} implies 
$$\delta_2p^{m-o_2} \equiv \Ese{r_1}{\delta_2p^{m-o_2}}r_2^{p^{m-o_1}}\equiv r_2-1=p^{m-o_2} \mod p^m$$ 
and hence $\delta_2=1$. 
 Suppose   now  that $o_1>0$, which ensures that $o_1>o_2$ and $r_2=r_1^{p^{o_1-o_2}}$. 
  As a consequence, we have  $v_p(r_2-1)=m-o_2>v_p(r_1-1)=m-o_1$. Moreover, by the definition of the $\delta_i$'s, there are integers $\lambda$ and $\mu$  such that
$\Ese{r_2}{\delta_1p^{m-o_1}}=1-r_1+\lambda p^m$ and $\Ese{r_1}{\delta_2 p^{m-o_2}}r_2^{\delta_1 p^{m-o_1}}=r_2-1+\mu p^m$. Then the following identities hold: 
\begin{eqnarray*}
r_1^{(\delta_1+\delta_2)p^{m-o_2}} -1 & = & 
r_1^{\delta_2p^{m-o_2}}r_2^{\delta_1p^{m-o_1}} -1 \\
&=& (r_1^{\delta_2p^{m-o_2}}-1)r_2^{\delta_1p^{m-o_1}}+r_2^{\delta_1p^{m-o_1}}-1 \\
&=& (r_1-1) \Ese{r_1}{\delta_2p^{m-o_2}} r_2^{\delta_1p^{m-o_1}} + (r_2-1)\Ese{r_2}{\delta_1p^{m-o_1}} \\
 &=&p^m(\mu(r_1-1)+\lambda(r_2-1)) \equiv 0 \mod p^{2m-o_1}.	
\end{eqnarray*}
  We have shown that  $  p^m =o_{p^{2m-o_1}}(r_1)$  divides  $(\delta_1+\delta_2)p^{m-o_2}$ and hence $\delta_1+\delta_2\equiv 0 \mod p^{o_2}$, as desired.  
\end{proof}

\begin{lemma}\label{CenterOdd}
One has	$\ZZ(G)=\begin{cases} \GEN{b_1^{p^m},b_2^{p^m},b_1^{p^{m-o_2}}a}, & \text{if } o_1=0; \\ \GEN{b_1^{p^m},b_2^{p^m},b_1^{-\delta_1p^{m-o_2}}b_2^{\delta_1p^{m-o_1}}a}, & \text{otherwise}. \end{cases}$ 
\end{lemma}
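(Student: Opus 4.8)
The plan is to work with the normal form $b_1^xb_2^ya^z$ for elements of $G$ together with the commutator identities \eqref{Commb1} and \eqref{Commb2}. Since $G=\GEN{b_1,b_2}$, an element $g=b_1^xb_2^ya^z$ lies in $\ZZ(G)$ exactly when $[g,b_1]=[g,b_2]=1$; as $|a|=p^m$, these two conditions translate, via \eqref{Commb1} and \eqref{Commb2}, into the pair of congruences
\[
\Ese{r_2}{y}+z(r_1-1)\equiv 0
\quad\text{and}\quad
-\Ese{r_1}{x}\,r_2^{y}+z(r_2-1)\equiv 0
\pmod{p^m},
\]
which I will call $(\star)$ and $(\star\star)$. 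The whole proof then amounts to identifying the solution set of $(\star)$–$(\star\star)$ with the subgroup $Z_0$ generated by the three displayed elements.

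For the inclusion $Z_0\subseteq\ZZ(G)$ I would substitute each generator into $(\star)$–$(\star\star)$. Both $b_1^{p^m}$ and $b_2^{p^m}$ are immediate from $v_p(\Ese{r_i}{p^m})=v_p(p^m)=m$ (\Cref{EseProp}\ref{ValEse}). For the mixed generator, $(\star)$ is exactly the defining congruence \eqref{EqDelta1}, while $(\star\star)$ is \eqref{EqDelta2} after one rewriting: in the case $o_1>0$ the relation $\delta_1+\delta_2\equiv 0 \bmod p^{o_2}$ of \eqref{deltas} gives $-\delta_1p^{m-o_2}\equiv \delta_2 p^{m-o_2}\bmod p^m$, so that $\Ese{r_1}{-\delta_1 p^{m-o_2}}\equiv \Ese{r_1}{\delta_2 p^{m-o_2}}\bmod p^m$ by injectivity of $\Ese{r_1}{\cdot}$ on $\Z/p^m\Z$ (\Cref{EseBijective}); in the case $o_1=0$ one uses instead $r_1\equiv 1\bmod p^m$ together with \Cref{EseProp}\ref{EseCero}. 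This is precisely the step that forces the shape of the mixed generator, including the sign of its $b_1$-exponent.

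For the reverse inclusion I would match orders. On one side, $\ZZ(G)\cap G'=\GEN{a^{p^{\max(o_1,o_2)}}}$ has order $p^{m-\max(o_1,o_2)}$ by \Cref{Characteristic}\ref{CenterComm}, and the image $\ZZ(G)G'/G'\le G/G'$ is read off from the solvability of $(\star)$–$(\star\star)$ in $z$: since $v_p(\Ese{r_2}{y})=v_p(y)$ and $v_p(\Ese{r_1}{x})=v_p(x)$, a class $\bar b_1^{x}\bar b_2^{y}$ lifts to a central element exactly when $p^{m-o_1}\mid y$, $p^{m-o_2}\mid x$, and the two residues for $z$ forced by $(\star)$ and $(\star\star)$ (modulo $p^{o_1}$ and $p^{o_2}$) are compatible. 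On the other side, $Z_0G'/G'$ is generated by the images of the three elements, and one shows $Z_0\cap G'=\ZZ(G)\cap G'$ by checking that a suitable $p$-power of the mixed generator equals $a^{p^{\max(o_1,o_2)}}$; this last computation is carried out with \eqref{Exp} and uses $\Te{r_1,r_2}{p^{e}}\equiv 0\bmod p^{e}$ (\Cref{EseProp}\ref{ValEse2}). Comparing the two counts yields $|Z_0|=|\ZZ(G)|$, and since $Z_0\subseteq\ZZ(G)$ the two subgroups coincide.

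I expect the bookkeeping in the reverse inclusion to be the main obstacle: one must keep track of the several regimes dictated by \Cref{Main}\ref{5} — in particular $o_1=0$ versus $0\le o_2<o_1$, and $m\le n_2$ versus $m>n_2$ (where $\bar b_2^{p^m}$ may already be trivial in $G/G'$) — and verify in each that the compatibility condition on $z$ carves out exactly $Z_0G'/G'$ and nothing more. By contrast, the forward inclusion and the identification of $\ZZ(G)\cap G'$ are comparatively routine once \eqref{EqDelta1}, \eqref{EqDelta2} and \eqref{deltas} are in hand.
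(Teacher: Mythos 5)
Your setup and your forward inclusion coincide with the paper's: the paper characterizes centrality of $b_1^xb_2^ya^z$ by exactly your congruences $(\star)$--$(\star\star)$ (its \eqref{EqZ1}--\eqref{EqZ2}), and verifies the generators by the same appeal to \eqref{EqDelta1}, \eqref{EqDelta2} and \eqref{deltas}; your sign bookkeeping for the $b_1$-exponent of the mixed generator is right (only note that $\Ese{r}{n}$ is defined for $n\ge 0$, so the exponent $-\delta_1p^{m-o_2}$ should be replaced by a non-negative representative modulo $p^m$ before invoking \eqref{Commb2} --- harmless). Where you genuinely diverge is the reverse inclusion. For $o_1=0$ the paper solves the congruences directly, which is essentially your analysis collapsed to two lines. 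For $o_1>0$, however, the paper avoids your order count: it defines $f\colon\ZZ(G)\to B/N$ with $B=\GEN{b_2^{p^{m-o_1}}}$, $N=\GEN{b_2^{p^{\min(m,n_2)}}}$ and $f(b_1^xb_2^ya^z)=b_2^yN$, observes that $f(c)$ already generates $B/N$ because $p\nmid\delta_1$ (so $\ZZ(G)=\GEN{c,\ker f}$), and then only has to compute $\ker f=\GEN{b_1^{p^m},b_2^{p^m},a^{p^{o_1}}}$, i.e.\ solve \eqref{EqZ1}--\eqref{EqZ2} when the $b_2$-part is essentially trivial. This surjectivity trick means the paper never characterizes, for general admissible $(x,y)$, when your two $z$-residues modulo $p^{o_1}$ and $p^{o_2}$ are compatible --- which is precisely the heaviest step of your plan and the one you leave as deferred bookkeeping (determining $\ZZ(G)G'/G'$ exactly, across the regimes $m\le n_2$ versus $m>n_2$). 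Your route is viable --- there is no obstruction, and your counting logic ($Z_0\subseteq\ZZ(G)$ plus equal intersections with $G'$ and equal images in $G/G'$ forces equality) is sound --- but the paper's argument buys you out of that analysis. Your final ingredient matches the paper's closing step: it shows $c^{p^{o_1}}=b_1^{\delta_2p^{m-o_2+o_1}}b_2^{\delta_1p^m}a^{p^{o_1}}$, whence $a^{p^{o_1}}\in\GEN{b_1^{p^m},b_2^{p^m},c}$, deriving this from regularity of the auxiliary subgroup $\GEN{b_1^{p^{m-o_2}},b_2^{p^{m-o_1}},a}$; your direct computation via \eqref{Exp} works equally well, since $v_p\bigl(\Ese{r_1}{x}\Ese{r_2}{y}\Te{r_1^x,r_2^y}{p^{o_1}}\bigr)\ge 2m-o_2\ge m$ by \Cref{EseProp}\ref{ValEse}-\ref{ValEse2} and $\Ese{r_1^xr_2^y}{p^{o_1}}\equiv p^{o_1}\bmod p^m$ by \Cref{EseProp}\ref{EseCero}.
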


\begin{proof}
Let $g=b_1^x b_2^ya^z$ be an arbitrary element of $G$ with $x,y,z\in \Z$.
We characterize when $g\in\ZZ(G)$ in terms of conditions on the exponents $x,y,z$. 
For this, note that \eqref{Commb1} and \eqref{Commb2} ensure that $g=b_1^x b_2^ya^z\in \ZZ(G)$ if and only if the following congruences hold:
	\begin{eqnarray} 
	\label{EqZ1}	\Ese{r_2}{y}&\equiv & z(1-r_1)\mod p^m ,\\
	\label{EqZ2}	\Ese{r_1}{x} r_2^y &\equiv&  z(r_2-1)\mod p^m.
	\end{eqnarray}
In particular,   the elements $b_1^{p^m}$, $b_2^{p^{m}}$ and $c=b_1^{\delta_2p^{m-o_2}}b_2^{\delta_1p^{m-o_1}}a$ are all central.  
Let 
	$$d=\begin{cases} b_1^{p^{m-o_2}}a, & \text{if } o_1=0; \\
	b_1^{-\delta_1p^{m-o_2}}b_2^{\delta_1p^{m-o_1}}a, & \text{otherwise.} \end{cases}$$
By \eqref{deltas} we have $\GEN{b_1^{p^m},b_2^{p^m},d}=\GEN{b_1^{p^m},b_2^{p^m},c}\subseteq \ZZ(G)$.
If $o_1=0$ and $g=b_1^xb_2^ya^z\in \ZZ(G)$    then it follows from  \eqref{Erres}  that \eqref{EqZ1} is equivalent to $y\equiv 0 \mod p^m$ and hence \eqref{EqZ2} is equivalent to $x\equiv zp^{m-o_2}\mod p^m$. Thus $g\in \GEN{b_1^{p^m},b_2^{p^m},d}$ and hence $\ZZ(G)=\GEN{b_1^{p^m},b_2^{p^m},d}$, as desired.

Suppose otherwise that $o_1>0$ and define $B=\GEN{b_2^{p^{m-o_1}}}$, $N=b_2^{p^{\min(m,n_2)}}$ and $f:\ZZ(G)\rightarrow B/N$ by $f(b_1^xb_2^ya^z)=b_2^y N$. The map $f$ is well defined because, on the one hand if 
$b_1^xb_2^ya^z=b_1^ub_2^va^w$ then $y \equiv v \mod p^{n_2}$ and hence $b_2^yN= b_2^v N$; and on the other hand if $b_1^xb_2^ya^z\in\ZZ(G)$ then  \eqref{EqZ1} ensures that  $m-o_1=v_p(r_1-1)\le  \Ese{r_2}{y}=v_p(y)$  and hence $b_2^y\in B$.

We claim that $\ker f=\GEN{b_1^{p^m},b_2^{p^m},a^{p^{o_1}}}$. The inclusion from right to left is clear.   Assume  $g=b_1^xb_2^ya^z\in \ker f$. If $m\le n_2$   this implies that $p^m$ divides $y$. Then from \eqref{EqZ1}  and \cref{EseProp}\ref{ValEse} we have that $v_p(z)\ge o_1$ and hence from \eqref{EqZ2} we deduce that $v_p(x)\ge m-o_2+o_1>m$. 
 This shows that  $g\in \GEN{b_1^{p^m},b_2^{p^m},a^{p^{o_1}}}$, as desired. 
Suppose  now that $m>n_2$. Then $g=b_1^xa^z$ 
for some integers $x$ and $z$ and hence again from \eqref{EqZ1} we deduce that $v_p(z)\ge o_1$ and from \eqref{EqZ2} we  conclude  that $v_p(x)\ge m-o_2+o_1>m$. Therefore, again $g\in \GEN{b_1^{p^m},b_2^{p^m},a^{p^{o_1}}}$   and the claim  is proven.

We finally show that $\ZZ(G)=\GEN{b_1^{p^m},b_2^{p^m},c}$. To this end, observe that $B/N$ is generated by $f(c)$, as $p$  does not divide  $\delta_1$. 
This together with the claim and the fact that $f$ is a group homomorphism implies that $\ZZ(G)=\GEN{c,\ker f} = \GEN{b_1^{p^m},b_2^{p^m},a^{p^{o_1}},c}$. 
To complete the proof  we show that  $ a^{p^{o_1}}\in \GEN{b_1^{p^m},b_2^{p^m}, c}$. To this end, observe that the group $H=\GEN{b_1^{p^{m-o_2}}, b_2^{p^{m-o_1}},a}$ is regular   and that  $H'=\GEN{a^{p^{2m-o_1-o_2}}}$. Indeed,  $[a,b_1^{p^{m-o_2}}]= a^{r_1^{p^{m-o_2}}-1} $, $[a,b_2^{p^{m-o_1}}]=a^{r_2^{p^{m-o_1}}-1}$  and $[b_2^{p^{m-o_1}}, b_1^{p^{m-o_2}}]=a^{\Ese{r_1}{p^{m-o_2}}\Ese{r_2}{p^{m-o_1}}}$, and these three elements generate the same subgroup $\GEN{a^{p^{2m-o_1-o_2}}}$ since $$v_p(r_1^{p^{m-o_2}}-1)=v_p(r_2^{p^{m-o_1}}-1)=v_p(\Ese{r_1}{p^{m-o_2}}\Ese{r_2}{p^{m-o_1}})=2m-o_1-o_2.$$ 
As $(a^{p^{2m-o_1-o_2}})^{p^{o_1}}=a^{p^{2m-o_2}}=1$, from the regularity of $H$ it follows that $c^{p^{o_1 }}=b_1^{\delta_2p^{m-o_2+o_1}}b_2^{\delta_1 p^m} a^{p^{o_1}}$. Now $o_1>0$ implies $m-o_2+o_1>m$ and so the proof is complete.
\end{proof}

\begin{lemma}\label{G/L-N}
%Let $L=C_G(G')$, $N=Z(G)G'$ and suppose that $o_i=\max(o_1,o_2)$.
%Then
Let $t=m-\max(o_1,o_2)$. Then 
 the following hold:
\begin{enumerate}[label=$(\arabic*)$]
\item\label{ZGG'} $|\ZZ(G)\cap G'|=p^t$, $\ZZ(G)G'=\GEN{a,b_i^{p^m},b_1^{-p^{m-o_2}}b_2^{p^{m-o_1}}}$ and  $G/\ZZ(G)G'\cong C_{p^m}\times C_{p^t}$. 
\item\label{G/CGG'} 
$\Cen_G(G')= \Omega_t(G:\ZZ(G)G')  = \begin{cases} \GEN{a,b_1,b_2^{p^{o_2}}}, & \text{if } o_1=0; \\
\GEN{a,b_1^{p^{o_1}}, b_1^{p^{o_1-o_2}} b_2^{-1}}, & \text{otherwise}.\end{cases}$
\item \label{CGG''} $\Cen_G(G')'=  \mho_{m-t}(G') $.

%\item\label{Iso} There is an isomorphism $\rho:(G/N)^{p^{m-o_i}}\rightarrow G/L$ such that $\rho((gN)^{p^{m-o_i}})=gL$ for each $g\in G$.
\item\label{ExpCGG'o1=0} If $o_1=0$ then $\exp(\Cen_G(G'))=p^{n_1+o'_1}$.
\item\label{ExpCGG'o2=0} If $o_2=0$ then $\exp(\Cen_G(G'))=p^{\max(n_1+o'_1-o_1,n_2+o'_2)}$.

\end{enumerate}
\end{lemma}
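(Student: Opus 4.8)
The plan is to work throughout with the normal form $b_1^xb_2^ya^z$ for elements of $G$, using the power rule \eqref{Exp} and the commutator rules \eqref{Commb1}--\eqref{Commb2}, and to exploit that $a$ is central in $\Cen_G(G')$. The description of $\Cen_G(G')$ comes first, since it enters every other point. Writing $g=b_1^xb_2^ya^z$ we have $a^g=a^{r_1^xr_2^y}$, so $g\in\Cen_G(G')$ if and only if $r_1^xr_2^y\equiv 1\bmod p^m$. If $o_1=0$ then $r_1\equiv 1\bmod p^m$ by \eqref{vpr}, the condition reduces to $r_2^y\equiv 1$, i.e.\ $p^{o_2}\mid y$, and $\Cen_G(G')=\GEN{a,b_1,b_2^{p^{o_2}}}$. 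If $o_1>0$ then $o_1>o_2$ and $r_2=r_1^{p^{o_1-o_2}}$ by \eqref{Erres}, so the condition becomes $x+yp^{o_1-o_2}\equiv 0\bmod p^{o_1}$; reducing modulo $G'=\GEN a$ (where $b_1,b_2$ commute) identifies this set with $\GEN{a,b_1^{p^{o_1}},b_1^{p^{o_1-o_2}}b_2^{-1}}$. This establishes the displayed formulas in \ref{G/CGG'}.

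For \ref{ZGG'}, the equality $|\ZZ(G)\cap G'|=p^t$ is immediate from \Cref{Characteristic}\ref{CenterComm}, as $a^{p^{\max(o_1,o_2)}}$ has order $p^{m-\max(o_1,o_2)}=p^t$. Starting from the generators of $\ZZ(G)$ in \Cref{CenterOdd}, I absorb the central factor $a\in G'$ and, using $p\nmid\delta_1$ (so that the $\delta_1$-th power $b_1^{-\delta_1p^{m-o_2}}b_2^{\delta_1p^{m-o_1}}$ generates the same subgroup as $b_1^{-p^{m-o_2}}b_2^{p^{m-o_1}}$ modulo $G'$, and $\delta_1=1$ when $o_1=0$ by \eqref{deltas}), rewrite $\ZZ(G)G'=\GEN{a,b_1^{p^m},b_2^{p^m},b_1^{-p^{m-o_2}}b_2^{p^{m-o_1}}}$. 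The structure of $G/\ZZ(G)G'$ is then read off inside $\bar G=G/G'\cong C_{p^{n_1}}\times C_{p^{n_2}}$: writing $\beta_i=b_iG'$ and $S$ for the image of $\ZZ(G)G'$, I substitute $w=\beta_2\beta_1^{-p^{o_1-o_2}}$ when $o_1>0$ (and argue directly when $o_1=0$), which turns the mixed relation $\beta_1^{p^{m-o_2}}=\beta_2^{p^{m-o_1}}$ into $w^{p^{m-o_1}}=1$ while keeping $\beta_1$ of order $p^m$; all remaining relations are shown to be consequences, giving $\bar G/S\cong C_{p^m}\times C_{p^t}$. This last verification is the main obstacle: in case \Cref{Main}\ref{5}(b), where $n_2<m$, one must check that $\beta_2^{p^{n_2}}=1$ does not lower the order of $\beta_1$, which follows from the identity $n_2=2m-o_1-o_2'$ together with $o_2+o_2'\le m$ from \Cref{Main}\ref{2}; the bound $m\le n_1$ keeps $\beta_1$ of order exactly $p^m$.

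Point \ref{G/CGG'} is then completed by a cardinality argument. Since $G/\Cen_G(G')$ embeds into the cyclic group $\Aut(G')$ with image $\GEN{r_1,r_2}$ of order $p^{\max(o_1,o_2)}=p^{m-t}$, we get $|\Cen_G(G')|=p^{n_1+n_2+t}$. On the other hand \ref{ZGG'} gives $|\ZZ(G)G'|=p^{n_1+n_2-t}$, and $\Omega_t(C_{p^m}\times C_{p^t})$ has order $p^{2t}$, whence $|\Omega_t(G:\ZZ(G)G')|=p^{2t}\,|\ZZ(G)G'|=p^{n_1+n_2+t}$. As the two orders coincide, it suffices to prove the inclusion $\Cen_G(G')\subseteq\Omega_t(G:\ZZ(G)G')$, i.e.\ that $\bar g^{\,p^t}=1$ in $\bar G/S$ for every $g\in\Cen_G(G')$; this is a direct check in the diagonalized presentation $C_{p^m}\times C_{p^t}$, using the centralizer congruence on $(x,y)$, after which equality follows by counting.

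Finally, \ref{CGG''}--\ref{ExpCGG'o2=0} are local computations. For \ref{CGG''}, since $a$ is central in $C:=\Cen_G(G')$, the group $C'$ is generated by the single commutator of the two non-central generators found above; evaluating it via \eqref{Commb1}--\eqref{Commb2} and $v_p(\Ese{r_i}{p^{o_i}})=o_i$ from \Cref{EseProp}\ref{ValEse} produces a generator of $\GEN{a^{p^{\max(o_1,o_2)}}}=\mho_{m-t}(G')$. For \ref{ExpCGG'o1=0} and \ref{ExpCGG'o2=0} the lower bound on $\exp(C)$ is witnessed by $b_1$ (resp.\ by $b_1^{p^{o_1}}$ and $b_2$). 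For the upper bound I take an arbitrary $g=b_1^xb_2^ya^z\in C$ and raise it to $p^E$ with $E=n_1+o_1'$ (resp.\ $E=\max(n_1+o_1'-o_1,\,n_2+o_2')$) via \eqref{Exp}: the $b_1$- and $b_2$-exponents vanish from the orders in \eqref{Ordersbi} together with the centralizer congruences (and, for \ref{ExpCGG'o1=0}, the bound $o_2'-o_1'\le o_2+n_1-n_2$ of \Cref{Main}\ref{4}), while the $a$-exponent vanishes modulo $p^m$ because $E\ge m$ (for \ref{ExpCGG'o2=0} this uses the case split of \Cref{Main}\ref{5}) forces $\Te{r_1^x,r_2^y}{p^E}\equiv 0$ by \Cref{EseProp}\ref{ValEse2} and $\Ese{r_1^xr_2^y}{p^E}\equiv p^E\equiv 0\bmod p^m$ via $r_1^xr_2^y\equiv 1$.
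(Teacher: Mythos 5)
Your proposal is correct and follows essentially the same route as the paper: the same congruence computations via \eqref{Comma}--\eqref{Commb2} yield the explicit generators of $\Cen_G(G')$, the identification with $\Omega_t(G:\ZZ(G)G')$ is the same inclusion-plus-index argument (you count orders $p^{n_1+n_2+t}$ on both sides where the paper compares the cyclic quotients $G/\Cen_G(G')\cong G/L\cong C_{p^{m-t}}$), and the exponent computations in (4)--(5) use \eqref{Exp} together with \Cref{EseProp} and the constraints from \Cref{Main} exactly as the paper does. The only cosmetic deviation is in (3), where you obtain $\Cen_G(G')'$ as the cyclic group generated by the single commutator of the two non-central generators (legitimate, since $G'=\GEN{a}$ is central in $\Cen_G(G')$, which therefore has class at most $2$), whereas the paper proves the two inclusions separately via $\mho_t(L')=[L,\mho_t(L)]\subseteq[L,\ZZ(G)G']=1$ and the same explicit commutator evaluation.
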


\begin{proof}
 \ref{ZGG'}  This  is a direct consequence of  \Cref{Characteristic}\ref{CenterComm} and \Cref{CenterOdd}. 
 
\ref{G/CGG'}  Let $g=b_1^xb_2^ya^z$ be an arbitrary element of $G$. Then \eqref{Comma} yields that $g\in\Cen_G(G')$ if and only if
$a^{r_1^xr_2^y-1}=1$. 
If $o_1=0$ then the last equality holds if and only if $(1+p^{m-o_2})^y-1\equiv 0\bmod p^m$, equivalently if $p^{o_2}$ divides $y$. 
So 
\begin{equation}\label{CGG'1}
o_1=0 \ \ \textup{implies} \ \ \Cen_G(G')=\GEN{a,b_1,b_2^{p^{o_2}}}.
\end{equation}
Suppose now that $o_1>0$. 
Then $r_1=1+p^{m-o_1}$ and $r_2=r_1^{p^{o_1-o_2}}$ yielding that $[a,g]=a^{(1+p^{m-o_1})^{x+yp^{o_1-o_2}}-1}$.
As $o_{p^m}(1+p^{m-o_1})=p^{o_1}$, we have that $g\in\Cen_G(G')$ if and  only if $x+yp^{o_1-o_2} \equiv 0 \bmod p^{o_1}$, that is there exists an integer $v$ with $x=-yp^{o_1-o_2}+vp^{o_1}$. We have proven that 
\begin{equation}\label{CGG'2}
o_1>0 \ \ \textup{implies} \ \ \Cen_G(G')=\GEN{a,b_1^{p^{o_1}},b_1^{p^{o_1-o_2}}b_2^{-1}}.
\end{equation}
To conclude, let $L=\mho_t(G:Z(G)G')$. By \ref{ZGG'}, \eqref{CGG'1} and \eqref{CGG'2} we have that $\Cen_G(G')\subseteq L$ and $G/\Cen_G(G')\cong G/L\cong C_{p^{m-t}}$ and therefore $\Cen_G(G')=L$.

\ref{CGG''} 
%This follows from combining \ref{G/CGG'} with \eqref{Commb1},  \eqref{Commb2} and the following calculation for the case $o_1>0$:
%$$\GEN{[b_1^{p^{o_1-o_2}}b_2^{-1},b_1^{p^{o_1}}]}=\GEN{[b_2^{-1}, b_1^{p^{o_1}}] } = \GEN{[b_2 , b_1^{p^{o_1}}]} = \GEN{a^{-\Ese{r_1}{p^{o_1}}}} = \GEN{a^{p^{o_1}}}  = \GEN{a^{p^{m-t}}} .$$
The group $G'$ being cyclic, $\mho_{m-t}(G')=\Omega_t(G')$ and so \ref{G/CGG'} and the regularity of $G$ yield that 
$\mho_t(L')= [L,\mho_t(L)]\subseteq [L,\ZZ(G)G']=1$,  that is  $L'\subseteq \Omega_t(G')=\mho_{m-t}(G')$.
For the other inclusion, it suffices to observe that 
\[
\mho_{m-t}(G')=\GEN{a^{p^{m-t}}}=
\begin{cases}
\GEN{[b_1,b_2^{p^{o_2}}]}, & \textup{ if } o_1=0;\\
\GEN{[b_1^{p^{o_1}},(b_1^{p^{o_1-o_2}}b_2^{-1})]}, & \textup{ otherwise.}
\end{cases}
\]

\ref{ExpCGG'o1=0}-\ref{ExpCGG'o2=0}
Assume that $o_1=0$. Then we have $b_1\in \Cen_G(G')$ and $|b_1|=p^{n_1+o'_1}$, from which we derive $\exp(G)\ge p^{n_1+o'_1}$. Let now $e=n_1+o'_1$. Then \Cref{Main}\ref{4}-\ref{5} yields that $m\le e$ and $n_2+o'_2-o_2\le e$.
It follows from \eqref{Ordersbi}, \eqref{Exp} and \Cref{EseProp}\ref{ValEse}-\ref{ValEse2} that $(b_1^xb_2^{yp^{o_2}}a^z)^{p^e}=1$ for every $x,y,z\in \Z$. We have shown that $\exp(\Cen_G(G'))=p^{n_1+o'_1}$. A similar argument  works when   $o_2=0<o_1$.  
\end{proof}

In the following lemma, $\Soc(G')$ denotes the \emph{socle} of $G'$. 
We remark that $\Soc(G')=\GEN{a^{p^{m-1}}}$, because $G'$ is cyclic of order $p^m$ and $m\ge 1$.

\begin{lemma}\label{ParametersQuotient}
Write $\overline G= G/\Soc(G')$ and assume that $m\geq 2$. 
Then $\overline{G}$ is a non-abelian group and one has 
$\inv(\ov G)=(p,m-1,n_1,n_2,1,1,\overline o_1, \overline o_2, \overline o'_1,\overline o'_2, \ov u_1,\ov u_2)$ where:
\begin{align*}
\overline o_i &=\max (0, o_i -1), \hspace{0.60cm} \text{for }i=1,2; \\ 
\overline o_1'  &=  \begin{cases}
o_1' ,&\text{if }   o_1 =0,  \ 0<\min(o_1' ,o_2 ) \text{ and }o_2'  = o_1'  +o_2  +n_1 -n_2  ;\\
\max(0,o_1' -1),&\text{otherwise};
\end{cases} \\ 
\overline o_2'  &= \begin{cases}
o_2' , &\text{if }  o_2 =0 , \ n_1 -n_2 <o_1  \text{ and }0<o_2 ' =o_1'  +n_1 -n_2 -o_1   ; \\
\max(0,o_2' -1), &\text{otherwise}.
\end{cases} 
\end{align*}

\end{lemma}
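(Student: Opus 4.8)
The plan is to determine the twelve entries of $\inv(\ov G)$ in blocks, treating the first six as immediate, the pair $(\ov o_1,\ov o_2)$ by a monotonicity argument, and the pair $(\ov o'_1,\ov o'_2)$ by a case analysis that constitutes the heart of the proof. First I would record the structural facts. Since $\Soc(G')=\GEN{a^{p^{m-1}}}\le G'$, we have $(\ov G)'=G'/\Soc(G')$, which is cyclic of order $p^{m-1}$; hence $\ov G$ is non-abelian precisely because $m\ge 2$, it is $2$-generated by $\ov b_1,\ov b_2$, and $\ov G/(\ov G)'\cong G/G'\cong C_{p^{n_1}}\times C_{p^{n_2}}$. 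This already yields the block $(p,m-1,n_1,n_2,1,1)$, the two $1$'s because $p$ is odd.

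For $(\ov o_1,\ov o_2)$ I would argue as follows. For $g\in G$ with image $\ov g$, the relation $\ov a^{\ov g}=\ov a^{r(g)}$ in $(\ov G)'=\GEN{\ov a}$ of order $p^{m-1}$ together with \Cref{EseProp}\ref{ValEse} gives $p^{\ov o(\ov g)}=\ord_{p^{m-1}}(r(g))=p^{\max(0,o(g)-1)}$, so $\ov o(\ov g)=\max(0,o(g)-1)$. Because $\ov G/(\ov G)'=G/G'$, every basis of $\ov G$ is the image of a basis of $G$. Since $f(x)=\max(0,x-1)$ is non-decreasing and $(o_1,o_2)=\min_{\lex}\{(o(c_1),o(c_2)):c\in\B\}$, comparing lexicographic minima gives $(\ov o_1,\ov o_2)=(f(o_1),f(o_2))$; the only delicate point, the identification of $0$ and $1$ by $f$, is settled using that $\max(o(c_1),o(c_2))=\max(o_1,o_2)$ for every basis $c$, which holds because $G/\Cen_G(G')$ is cyclic and is generated by the images of $c_1,c_2$.

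For $(\ov o'_1,\ov o'_2)$ I would first establish two reductions. Writing $c_i^{p^{n_i}}=a^{w_i}$ for a basis $c$ (legitimate since $|c_iG'|=p^{n_i}$), a direct order count yields $\ov o'_i(\ov c)=\max(0,o'_i(c)-1)$ for \emph{every} basis $c$ of $G$, so passing to the quotient decrements $o'_i$ by exactly one except when it is already $0$. Second, a short computation with \eqref{Erres} shows $\ov r_i\equiv r_i \bmod p^{m-1}$ (both reducing to $1$ in the degenerate subcases), whence the set of standard bases $\B_r(\ov G)$ is exactly the image of $\B_r^{(m-1)}:=\{c\in\B : r(c_i)\equiv r_i \bmod p^{m-1}\}\supseteq \B_r$. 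Combining,
\[
(\ov o'_1,\ov o'_2)=\max_{\lex}\{(\max(0,o'_1(c)-1),\,\max(0,o'_2(c)-1)) : c\in \B_r^{(m-1)}\}.
\]
The whole subtlety is thus the gap between $\B_r$ and $\B_r^{(m-1)}$: inspecting $v_p(r(c_i)-1)$ shows $o(c_i)$ is forced to equal $o_i$ when $o_i>0$, whereas when $o_i=0$ the relaxed congruence permits the extra value $o(c_i)=1$.

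Finally I would run the case analysis of \Cref{Fijando-rLema}, namely $o_1=0$, $o_2=0<o_1$, and $0<o_2<o_1$. When the relevant $o_i$ does not vanish, $\B_r^{(m-1)}$ contributes nothing beyond $\B_r$ and the maximum is $(\max(0,o'_1-1),\max(0,o'_2-1))$. The exceptional values arise only in the first two cases: using the power formula \eqref{Exp} I would produce an explicit extra basis, replacing $b_1$ (resp.\ $b_2$) by a suitable product $b_1^{x}b_2^{y}a^{z}$ with $o(c_i)=1$, whose element order is one $p$-power larger, so that after the decrement its contribution is $o'_i$ rather than $o'_i-1$; the boundary conditions $o'_2=o'_1+o_2+n_1-n_2$ with $\min(o'_1,o_2)>0$ (resp.\ $o'_2=o'_1+n_1-n_2-o_1>0$ with $n_1-n_2<o_1$) are precisely what make such a basis exist and be optimal, the optimality being checked against the ranges of \Cref{OPrima++} applied to $\ov G$. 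I expect this last step, constructing the optimal extra basis and proving through \eqref{Exp} and \Cref{EseProp} that no basis does better, to be the main obstacle, since it requires tracking the interaction between the relaxed conjugation exponents modulo $p^{m-1}$ and the orders $|c_i|$ modulo $\Soc(G')$.
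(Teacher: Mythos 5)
Your proposal is correct and takes essentially the same route as the paper's proof: you compute $o(\overline{c}_i)=\max(0,o(c_i)-1)$ and $o'_i(\overline{c})=\max(0,o'_i(c)-1)$ for bases, locate the exceptional cases exactly where a basis with $o(c_i)=1$ above $o_i=0$ survives in $\B_r(\overline{G})$ with order one $p$-power larger, and certify optimality via \Cref{OPrima++} in $\overline{G}$ --- precisely the paper's argument, whose modified bases $(\overline{b}_1\overline{b}_2^{\,p^{\overline{o}_2}},\overline{b}_2)$ and $(\overline{b}_1,\overline{b}_1^{\,p^{\overline{o}_1}}\overline{b}_2)$ are exactly the images of the lifts you describe. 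One immaterial inaccuracy, harmless because your final certification goes through \Cref{OPrima++} rather than an enumeration of your set $\B_r^{(m-1)}$: the remark that $o(c_i)$ is forced to equal $o_i$ whenever $o_i>0$ requires the lex-minimality of $(o_1,o_2)$ when $o_i=1$, and can actually fail for $i=2$ (e.g.\ for $(o_1,o_2)=(0,1)$ with $n_1=n_2$, swapped bases give $(o(c_1),o(c_2))=(1,0)$ inside $\B_r^{(m-1)}$).
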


\begin{proof} 
That the first four entries of $\inv(\overline G)$ are $p, m-1, n_1$ and $n_2$ is obvious.
Since we are dealing with two groups $G$ and $\overline G$, in this proof we distinguish $\B_{r}=\B_r(G)$ and $\B_r(\overline G)$. 

Let $\overline g$ denote the natural image in $\overline G$ of an element $g\in G$. Then $r(\overline g)$ is the unique integer in the interval $[2,p^{m-1}+1]$ that is congruent to $r(g)$ modulo $p^{m-1}$. 
By \eqref{o=Order}   we have 
	$$m-1 -o(\overline{b_i})= v_p(r(\overline{b_i})-1)=  m-\max(1,o_i). $$
Hence $o(\overline{b_i})=\max(0,o_i-1)$ and thus \Cref{Fijando-rLema} yields   
that $\overline o_i=\max(0,o_i-1)$.  
Moreover,   $\bb=(\bb_1,\bb_2)$ is an element of $ \B_r(\overline G)$.    
Note that the following hold:
$$b_i^{p^{n_i+o_i'  }}=1; \quad \overline b_i^{p^{n_i +o_i' -1}}= \begin{cases}
	\overline a^{p^{m-1}}=1, & \text{if } o_i' \neq 0;  \\
	\bb_i^{p^{n_i-1}}\neq 1 & \text{if }o_i'  =0;
	\end{cases}   \qand   	\bb_i^{p^{n_i+o_i' -2}}=\begin{cases} 
	\overline a^{p^{m-2}}\neq 1, & \text{if } o_i' >1; \\
	\bb_i^{p^{n_i-1}}\neq 1, & \text{if }o_1' =1.  
	\end{cases} $$ 
	Therefore  we derive that 
	$o_i'(\bb)=\log_p |\bb_i|-n_i = \max(0,o_i' -1).$ 

To finish the proof we  distinguish  three cases  according to \Cref{OPrima++} and   search for some $\hat{b}\in \B_r(\overline{G})$ satisfying the corresponding conditions in the lemma.   Then   \Cref{OPrima++} will guarantee  that $o'_i=o'_i(\hat b_i)$ for $i=1,2$.
In most cases $\overline{b}$  already  satisfies the desired conditions and hence, in such cases, we  take     $\hat{b}=\bb$ and hence $\overline o'_i= o_i'(\bb)=\max(0,o'_i-1)$.
Otherwise we modify slightly $\overline{b}$ to obtain the desired $\hat{b}$.  

\textbf{Case 1}. Suppose first that $o_1=0$.   By \cref{Main}\ref{4}  we have 
$$o'_1\le o'_2\le o'_1+o_2+n_1-n_2$$ and hence $o'_1(\overline b)=\max(0,o'_1-1)\le \max(0,o'_2-1)\le \overline o'_2(\overline b)$. 
Moreover, unless $0<\min(o'_1,o_2)$ and $o'_2=o'_1+o_2+n_1-n_2$, we also have $o'_2(\overline b) \le o'_1(\overline b)+\overline{o_2}+n_1-n_2$. As $\overline{o}_1=0$, the conditions in \Cref{OPrima++} hold for $\hat{b}=\bb$ and hence we have $\overline o'_i=o'_i(\bb)=\max(0,o'_i-1)$, as desired. 
  Assume now that  $0<\min(o'_1,o_2)$ and $o'_2=o'_1+o_2+n_1-n_2$. Then $o'_1(\overline b)=o'_1-1$, $o'_2(\overline b)=o'_2-1$ and $\overline o_2=o_2(\overline b)=o_2-1$ and hence $\overline b$ does not satisfy the hypotheses of \Cref{OPrima++}. 
Then we take $\hat b=(\bb_1 \bb_2^{p^{\overline o_2}},  \bb_2)$, which belongs to $\B_r(\overline G)$ because $[\overline{b_2}^{p^{\overline{o_2}}},\overline{a}]=1$. 
Using \eqref{Exp}, the regularity of $G$ and $m\le n_1$ we  compute 
\[
\hat b_1^{p^{n_1}} = \overline b_1^{p^{n_1}}\overline b_2^{p^{n_1+\overline o_2}}=\overline a^{p^{m-o'_1}} \overline b_2^{p^{n_1+o_2-1}}=\overline a^{p^{m-o'_1}}     \overline b_2^{p^{n_2+o'_2-o'_1-1}} = \overline a^{p^{m-o'_1}+p^{m-o'_1-1}}
\] 
and hence $|\hat b_1|=p^{n_1+o'_1}$ so that $o'_1(\hat b)=o'_1$, $o'_2(\hat b)=o'_2-1$ and we conclude  from \cref{OPrima++} that $\overline o_1' = o'_1$ and $\overline o'_2=o'_2-1$. 
This yields the desired conclusion because in this case $n_1-n_2\ge 0=o_1$ and $o'_2\ge o'_1>0$.  

\textbf{Case 2}. Suppose that $o_2=0<o_1$ so that \cref{Main}\ref{4} ensures $$o'_1+\min(0,n_1-n_2-o_1)\le o'_2\le o'_1+n_1-n_2.$$ 

Assume first that $n_1-n_2\ge o_1$. Then we have $n_1-n_2\ge \overline o_1$ and $o'_1\le o'_2\le o'_1+n_1-n_2$ and consequently also $o'_1(\overline b)\le o'_2(\overline b)\le o'_1(\overline b)+n_1-n_2$. Hence $\overline o'_i=o'_i(\overline b)=\max(0,o'_i-1)$.  

Suppose now that $n_1-n_2<o_1$. Then by \Cref{Main}\ref{4} we have $0<n_1-n_2$. If $o'_2=0$ or $o'_2>o'_1+n_1-n_2-o_1$ then we also have $o'_1(\overline b)+\min(0,n_1-n_2-\overline o_1)\le o'_2(\overline b)\le o'_1(\overline b)+n_1-n_2$ and again we have $\overline o_i=o'_i(\overline b)=\max(0,o'_i-1)$. 
Assume now that $0<o'_2=o'_1+n_1-n_2-o_1$. It follows that $o'_1>0$ and hence $o'_1(\overline b)+\min(0,n_1-n_2-o_1(\overline b))= o'_1 + \min(0,n_1-n_2-o_1)>o'_2(b)-1=o'_2(\overline b)$. Therefore \Cref{OPrima++} yields $o'(\overline b)\ne \overline o'$. 
In this case we take the basis $\hat b=(\overline b_1 ,\overline b_1^{p^{\overline o_1}}\overline b_2)$, which again belongs to the set $\B_r(\overline G)$ because $[\overline b_1^{p^{\overline o_1}},\overline a]=1$.
Then $o'_1(\overline b)=o'_1-1$ and if $k=\Ese{r_1}{p^{o_1-1}}\Ese{r_2}{1}\Te{r_1^{p^{o_1-1}},r_2}{p^{n_2}} + \Ese{r_1^{p^{o_1-1}}r_2}{p^{n_2}}$ then \Cref{EseProp} and \Cref{Main}\ref{5} imply $v_p(k)\ge n_2 \ge m+o'_2$. Using \eqref{Exp} we get
	$$(\overline b_1^{p^{o_1-1}}\overline b_2)^{p^{n_2}} = a^{p^{ m-1-o'_2}    +p^{m-o'_2}+k}.$$ 
Therefore  $|\overline b_1^{p^{o_1-1}}\overline b_2|=p^{n_2+o'_2}$ and we conclude that 
$o'_1(\hat b)=o'_1-1$, $o'_2(\hat b)=o'_2$ and hence $\overline o'_1=o'_1-1$ and $\overline o'_2=o'_2$. 
	
\textbf{Case 3}. Finally, suppose that $o_1o_2\ne 0$. Then $\overline o_1=o_1-1>o_2-1=\overline o_2\ge 0$ and \cref{Main}\ref{4} guarantees $o'_1\le o'_2\le o'_1+n_1-n_2$. Hence $o'_1(\overline b)+\min(0,n_1-n_1-\overline o_1) \le o'_1(\overline b)\le o'_2(\overline b)\le o'_1(\overline b)+n_1-n_2$ and we get $\overline o'_i=o'_i(\overline b)=\max(0,o'_i-1)$.
\end{proof}

\section{Proofs of the main results}

In this section we prove \Cref{ExponentCGG'}, \Cref{OTheorem} and \Cref{oo'Theorem}.
The first one will be included in \Cref{JenningsExpCGG'}, which relies on \cref{lemmaIso}. \Cref{OTheorem} is proven shortly after \Cref{JenningsExpCGG'}, while \cref{oo'Theorem} is a consequence of \cref{lem:o=o,lem:o'=o'}. We conclude the section by proving \cref{Cor1.0} and \cref{Cor1.1}, here presented as \cref{cor:Cent} and \cref{cor:type}, respectively.

\begin{lemma}\label{lemmaIso}
%Let $G$ be a finite $p$-group such that $G'$ is cyclic and $p$ is odd. Write   $L=\Cen_G(G')$, $N=Z(G)G'$ and   $|\ZZ(G)\cap G'|=p^{m-o}$. Then $L=\{g\in G : g^{p^{m-o}}\in N\}$.
 Let $p$ be an odd prime  and let  $G$   be  a finite $p$-group with cyclic commutator subgroup.  Write, moreover, $|G'|=p^m$ and $|G'\cap \ZZ(G)|=p^t$. Then the following hold:  
	$$\Cen_G(G')=\{g\in G : g^{p^t} \in \ZZ(G)G'\} \ \ \textup{ and } \ \ \Cen_G(G')'= \mho_{m-t}(G').$$ Moreover, for every subgroup $N$ of $G$ contained in $  \Cen_G(G')$ one has $\exp(N)= p^{\min \{ n \ :  \ \M_{p^n}(N)=1 \} }$.
 
\end{lemma}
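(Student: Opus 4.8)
The plan is to fix a generator $a$ of the cyclic group $G'$, write $a^g=a^{r(g)}$ for $g\in G$, and exploit that $p$ odd together with $G'$ cyclic makes $G$ regular. First I would record the structural facts. Conjugation gives a homomorphism $G\to\Aut(G')$ with kernel $C:=\Cen_G(G')$ and image inside the cyclic Sylow $p$-subgroup of $\Aut(C_{p^m})$, so $G/C$ is cyclic; deciding which powers of $a$ are central gives $\ZZ(G)\cap G'=\langle a^{p^{m-t}}\rangle$ with $t=\min_g v_p(r(g)-1)$, whence $|G/C|=p^{m-t}$ by \Cref{EseProp}\ref{ValEse}. I would then fix $g_0$ with $v_p(r(g_0)-1)=t$ (maximal order in $G/C$), so that $G=\langle g_0\rangle C$. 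The crucial structural observation is that $C$ both contains and centralizes $G'$, so $G'\subseteq\ZZ(C)$ and hence $C$ has nilpotency class at most $2$.

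Next I would prove $C'=\mho_{m-t}(G')$, since it feeds the centralizer description. For $C'\subseteq\langle a^{p^{m-t}}\rangle$ I would show each $[c,c']$ ($c,c'\in C$) is fixed by $g_0$: writing $c^{g_0}=c[c,g_0]$ and using that $[c,g_0],[c',g_0]\in G'\subseteq\ZZ(C)$ drop out of the commutator yields $[c,c']^{g_0}=[c,c']$, so $[c,c']$ is $G$-central. For the reverse inclusion I would note that $G'=\langle [g_0,C],C'\rangle$ together with the properness of $C'\subseteq\langle a^{p^{m-t}}\rangle$ forces some $c_*\in C$ with $[g_0,c_*]$ a generator of $G'$, and then compute via $\Ese{r(g_0)}{p^{m-t}}$ and \Cref{EseProp} that $[g_0^{p^{m-t}},c_*]\in C'$ has $p$-valuation exactly $m-t$, hence generates $\langle a^{p^{m-t}}\rangle$.

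For the centralizer identity, observe $\{g:g^{p^t}\in\ZZ(G)G'\}=\Omega_t(G:\ZZ(G)G')$ as $G/\ZZ(G)G'$ is abelian. The inclusion $C\subseteq\Omega_t(G:\ZZ(G)G')$ follows from the class-$2$ structure: for $c\in C$ one has $[c^{p^t},c']=[c,c']^{p^t}\in\mho_t(C')=1$ and $[c^{p^t},g_0]=[c,g_0]^{p^t}\in\langle a^{p^t}\rangle$, so correcting $c^{p^t}$ by a suitable $a^l\in G'$ produces an element centralizing both $g_0$ and $C$, hence central, giving $c^{p^t}\in\ZZ(G)G'$. The reverse inclusion is the crux and the main obstacle: a naive count (cyclic quotient of order $p^{m-t}$ plus the exponent bound just obtained) does \emph{not} force equality, since inside an abelian group a subgroup of exponent $p^t$ with cyclic quotient need not exhaust the $p^t$-torsion. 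Instead I would argue by contradiction: if $g\notin C$ with $|gC|=p^s$, $s\ge 1$, and $g^{p^t}\in\ZZ(G)G'$, then testing centrality of the correction $g^{p^t}a^{-l}$ against $c\in C$ and using $[g^{p^t},c]=a^{k(c)\Ese{r(g)}{p^t}}$ forces $[g,C]\subseteq\langle a^{p^{m-t}}\rangle$; but writing $g=g_0^{\,j}c_0$ with $v_p(j)=m-t-s$ and evaluating $[g,c_*]$ through $\Ese{r(g_0)}{j}$ yields a commutator of $p$-valuation $m-t-s<m-t$, a contradiction.

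Finally, any $N\le C$ inherits class at most $2$, hence (for $p$ odd) is regular, so $\exp(N)=p^e$ with $e=\min\{n:\mho_n(N)=1\}$. Since $\M_{p^n}(N)\supseteq\mho_n(N)$ always, $\M_{p^n}(N)=1$ forces $n\ge e$; conversely, because $\gamma_i(N)=1$ for $i\ge 3$ and every surviving Jennings factor $\mho_l(N)$, $\mho_l(N')$ in $\M_{p^e}(N)$ requires $l\ge e$, one gets $\M_{p^e}(N)=1$, so $\min\{n:\M_{p^n}(N)=1\}=e$. I expect the bookkeeping in the reverse centralizer inclusion, and in particular isolating and exploiting the element $c_*$, to carry the real difficulty; everything else is regularity together with class-$2$ commutator calculus.
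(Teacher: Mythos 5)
Your proof is correct, but it follows a genuinely different route from the paper's. The paper proves \Cref{lemmaIso} by invoking a theorem of Cheng to decompose $G$ as a central product $H * G_1 * \cdots * G_s * A$ with $H$ two-generated satisfying $H'=G'$, each $G_i$ two-generated of class $2$ and $A$ abelian, and then transfers the two identities from $H$ to $G$ using \Cref{G/L-N}\ref{G/CGG'}--\ref{CGG''} --- results that rest on the explicit classification machinery of Section 3 (the presentation of $\G_I$, the power and commutator formulas, \Cref{CenterOdd}). You instead argue directly and self-containedly: cyclicity of the Sylow $p$-subgroup of $\Aut(C_{p^m})$ for odd $p$ gives $G=\GEN{g_0}\Cen_G(G')$ with $|G/\Cen_G(G')|=p^{m-t}$, class-$2$ commutator calculus inside $C=\Cen_G(G')$ together with $g_0$-invariance shows $C'\subseteq \ZZ(G)\cap G'=\mho_{m-t}(G')$ (in fact a slightly cleaner statement than needed), the element $c_*$ with $[g_0,c_*]$ generating $G'$ plus $v_p(\Ese{r_0}{p^{m-t}})=m-t$ from \Cref{EseProp}\ref{ValEse} gives the reverse inclusion, and your valuation comparison $m-t-s<m-t$ correctly rules out any $g\notin C$ with $g^{p^t}\in\ZZ(G)G'$ --- I checked each of these computations and they are sound, as is the final Jennings/exponent argument, which coincides with the paper's (class $2$ plus regularity plus \eqref{EqJenningsLazard}, noting $2p^l\ge p^n$ forces $l\ge n$ for odd $p$). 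What each approach buys: yours avoids both Cheng's theorem and the entire Section-3 classification, so the lemma becomes independent of the two-generated theory; the paper's proof is shorter given that infrastructure, which it needs elsewhere anyway. One small point you should make explicit: the existence of $c_*$ uses $C'\subsetneq G'$, i.e.\ $t<m$; in the degenerate case $t=m$ one has $C=G$ (class at most $2$), both identities hold trivially, and the reverse centralizer inclusion is vacuous, so nothing breaks --- but the case split deserves a sentence.
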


\begin{proof}
 The abelian case is straightforward so we assume that $G'\neq 1$. 
By a Theorem of Cheng \cite{Cheng1982} the group $G$ can be expressed as a central product
	$$G= H*G_1*\dots *G_s*A, $$
where each $G_i$ is a   $2$-generated  group  of nipotency class $2$, the group $H$ is  $2$-generated with $H'=G'$ and $A$ is abelian. 
 For each $i=1,\dots,s$, write $G_i=\GEN{x_i,y_i}$ and $|G_i'|=p^{m_i}$.
Set $K=G_1*\dots *G_s*A$. 
As $G'=H'$ and $[H,G_i]=[H,A]=1$ it follows that $Z(H)\cap H'=Z(G)\cap G'$ and $\Cen_G(G')=\Cen_G(H')=\Cen_H(H')K$.
Let $L =\{g\in G : g^{p^t}\in \ZZ(G)G'\}$  and note that  $A\subseteq L $.
  Moreover, since each $G_i$ is of class $2$, we have that $Z(G_i)=\GEN{G'_i,x_i^{p^{m_i}},y_i^{p^{m_i}}}$ and hence
	$$\ZZ(G)=\GEN{ \ZZ(H), A, x_i^{p^{m_i}}, y_i^{p^{m_i}}\ :\ i=1,\dots,s}.$$ 
For each $i$, observe that $m_i\leq t \leq m$ because $G_1'*\cdots*G_s'$ is contained in $\ZZ(G)\cap G'$. 
Therefore,   for each choice of $i$, one has $\mho_t(G_i)\subseteq \ZZ(G)$ and we derive that $K\subseteq L$. 
Moreover, it follows from $[H,K]=1$ that $Z(H)H'\subseteq Z(G)G'$. Since $|\ZZ(H)\cap H'|=p^t$,  \Cref{G/L-N}\ref{G/CGG'} yields  $\Cen_G(G')=\Cen_H(H')K=\{h\in H : h^{p^t}\in \ZZ(H)H'\}K \subseteq L$.     For the other inclusion take $g=hk\in L$ with $h\in H$ and $k\in K$ and note that $h\in H\cap L\subseteq \Cen_H(H')$ by \Cref{G/L-N}\ref{G/CGG'}.   This shows that $\Cen_G(G')=L$. 

 We now show that $\Cen_G(G')'=\mho_{m-t}(G')$. For this, let $g,h\in C_G(G')$. Then $h ^{p^{t}}\in G'\ZZ(G)$   and, as $C_G(G')$ has nilpotency class $2$, we have that $[g,h]^{p^t}=[g,h^{p^t}]=1$.   We have proven that  $C_G(G')'\subseteq     \mho_{m-t}(G') $   while  \Cref{G/L-N}\ref{CGG''}  ensures that  $\mho_{m-t}(G')=\mho_{m-t}(H')=\Cen_H(H')'\subseteq \Cen_G(G')'$.  

 Finally let $N$ be a subgroup of  $G$  such that $N\subseteq \Cen_G(G')$. Then $N$ has nilpotency class $2$, so that \eqref{EqJenningsLazard} yields $\M_{p^n}(N)=\mho_n(N)$ and the result follows .    
\end{proof} 

 The following result is a stronger version of  \Cref{ExponentCGG'}.

\begin{theorem}\label{JenningsExpCGG'}
	Let $k$ be a field of odd characteristic $p$ and let $G$ be a finite  $p$-group with cyclic commutator subgroup. If $H$ is another group with $kG$ and $kH$ isomorphic as $k$-algebras then \begin{enumerate}[label=$(\alph*)$]
		\item For every  algebra  isomorphism  $\phi:kG\to k H$ preserving augmentation one has that 
		$$\phi(\augNor{\Cen_G(G')}{G})=\augNor{\Cen_H(H')}{H}; $$
		\item\label{JenningsCGG'} $\M_i(\Cen_G(G'))/\M_{i+1}(\Cen_G(G'))\cong \M_i(\Cen_H(G'))/\M_{i +1}(\Cen_H(H'))$ for every $i\ge 1$; 
		\item $\exp(\Cen_G(G'))=\exp(\Cen_H(H'))$;
		\item  $\Cen_G(G')/G'\cong \Cen_H(H')/H'$; 
		\item  $\Cen_G(G')/\Cen_G(G')'\cong \Cen_H(H')/\Cen_H(H')'$.   
	\end{enumerate}  
\end{theorem}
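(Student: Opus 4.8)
The plan is to prove the five assertions in the order listed, with each drawing on its predecessors together with the three main tools of \Cref{SectionPreliminaries}: the Transfer Lemma (\Cref{mainlemma}), the Jennings‑quotient invariance of \Cref{JenningsNor}, and the structural description of $\Cen_G(G')$ provided by \Cref{lemmaIso}. Before starting I would record two reductions. Since $G'$ is cyclic, $\dG(G')\le 1$, so \Cref{Known}\ref{it:2K}\ref{Generators} forces $\dG(H')\le 1$ and hence $H'$ cyclic; and writing $|G'|=p^m$, $|\ZZ(G)\cap G'|=p^t$, the integers $m$ and $t$ are algebra invariants by \Cref{Known}\ref{it:2K}\ref{JenningsDet} and \Cref{Known}\ref{it:2K}\ref{Z/ZG'}, so $|H'|=p^m$ and $|\ZZ(H)\cap H'|=p^t$. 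This allows me to apply \Cref{lemmaIso} to both $G$ and $H$ with the \emph{same} $t$, which is what makes the invariants transfer.

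Part (a) is then a direct application of the Transfer Lemma. By \Cref{lemmaIso}, $\Cen_G(G')=\{g\in G:g^{p^t}\in\ZZ(G)G'\}=\Omega_t(G:\ZZ(G)G')$, the last equality holding because $\ZZ(G)G'\supseteq G'$ makes $G/\ZZ(G)G'$ abelian, and similarly for $H$. Since $\phi(\augNor{\ZZ(G)G'}{G})=\augNor{\ZZ(H)H'}{H}$ by \Cref{Known}\ref{ZGG'Can}, taking $N_G=\ZZ(G)G'$ and $N_H=\ZZ(H)H'$ in \Cref{mainlemma} yields $\phi(\augNor{\Cen_G(G')}{G})=\augNor{\Cen_H(H')}{H}$. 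Feeding this identity into \Cref{JenningsNor} with $L_G=\Cen_G(G')$ and $L_H=\Cen_H(H')$ gives part (b) at once. For part (c), \Cref{lemmaIso} yields $\exp(\Cen_G(G'))=p^{n_0}$ with $n_0=\min\{n:\M_{p^n}(\Cen_G(G'))=1\}$; because the Jennings series of a finite $p$‑group terminates, $\M_{p^n}(\Cen_G(G'))=1$ holds exactly when every quotient $\M_i/\M_{i+1}$ with $i\ge p^n$ is trivial, a condition read off from the data of part (b), so $n_0$ is common to $G$ and $H$.

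For part (d) I would pass to the abelianization. As $\phi(\augNor{G'}{G})=\augNor{H'}{H}$ by \Cref{Known}\ref{ZGG'Can}, the map $\phi$ descends along the projections to an isomorphism $\bar\phi\colon k(G/G')\to k(H/H')$, and combining $G'\subseteq\Cen_G(G')$ with part (a) gives $\bar\phi(\augNor{\Cen_G(G')/G'}{G/G'})=\augNor{\Cen_H(H')/H'}{H/H'}$. Applying \Cref{JenningsNor} to $\bar\phi$ with $L=\Cen_G(G')/G'$ matches every Jennings quotient of the abelian group $\Cen_G(G')/G'$ with that of $\Cen_H(H')/H'$; since for abelian groups the orders of the Jennings terms determine the isomorphism type, part (d) follows.

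Part (e) is the one genuinely new point, and I expect it to be the main obstacle: here the relevant subgroup $\Cen_G(G')'=\mho_{m-t}(G')$ (by \Cref{lemmaIso}) no longer contains $G'$, so neither the Transfer Lemma nor the quotient trick of part (d) applies verbatim. The key observation I would exploit is that cyclicity of $G'$ makes $\augNor{G'}{G}$ principal: if $G'=\GEN{a}$ then $\augNor{G'}{G}=(a-1)kG=kG(a-1)$, whence $(\augNor{G'}{G})^{n}=(a-1)^{n}kG$ for every $n$, and in particular, using $(a-1)^{p^s}=a^{p^s}-1$ in characteristic $p$,
\begin{equation*}
(\augNor{G'}{G})^{p^s}=(a^{p^s}-1)kG=\augNor{\mho_s(G')}{G}.
\end{equation*}
As $\phi$ preserves powers of ideals and $\phi(\augNor{G'}{G})=\augNor{H'}{H}$, specializing $s=m-t$ gives $\phi(\augNor{\Cen_G(G')'}{G})=\augNor{\Cen_H(H')'}{H}$. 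From here I would rerun the argument of part (d): $\phi$ descends to $\tilde\phi\colon k(G/\Cen_G(G')')\to k(H/\Cen_H(H')')$, which by part (a) and $\Cen_G(G')'\subseteq\Cen_G(G')$ carries $\augNor{\Cen_G(G')/\Cen_G(G')'}{G/\Cen_G(G')'}$ to its $H$‑counterpart; \Cref{JenningsNor} then matches the Jennings quotients of the abelian group $\Cen_G(G')/\Cen_G(G')'$, and the abelian‑structure principle completes the proof.
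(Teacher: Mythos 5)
Your proposal is correct and follows essentially the same route as the paper's own proof: part (a) via the Transfer Lemma (\Cref{mainlemma}) applied with $N_\Gamma=\ZZ(\Gamma)\Gamma'$ using \Cref{Known}\ref{ZGG'Can} and \Cref{lemmaIso}, part (b) via \Cref{JenningsNor}, part (c) via the exponent formula of \Cref{lemmaIso}, and parts (d) and (e) by descending to $k(G/G')$ and $k(G/\Cen_G(G')')$ and invoking the fact that for abelian groups the orders of the Jennings terms determine the isomorphism type. Your only genuine addition is to spell out, via principality of $\augNor{G'}{G}=(a-1)kG$ and the identity $(a-1)^{p^{m-t}}=a^{p^{m-t}}-1$ in characteristic $p$, why $\augNor{\Cen_G(G')'}{G}=\augNor{G'}{G}^{p^{m-t}}$, a step the paper asserts with a bare reference to \Cref{lemmaIso}.
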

 
\begin{proof}  
 Let $H$ be a group such that $kG\cong kH$ and let $\Gamma\in\{G,H\}$. It is well known that there  exists an isomorphism $kG\rightarrow kH$  preserving augmentation  (see e.g.\ the remark   on page 63 of \cite{Seh78}).
 Then $H'$ is also cyclic  as a consequence of \Cref{Known}\ref{it:2K}\ref{Generators}.
Moreover, the  number  $|\ZZ(G)\cap G'|=p^t$ is an algebra invariant of $kG$ by \Cref{Known}\ref{it:2K}\ref{Z/ZG'}. 
	By \Cref{lemmaIso}  and  \Cref{Known}\ref{ZGG'Can}, the  hypotheses  of \Cref{mainlemma} hold for $L_{\Gamma}=\Cen_{\Gamma}(\Gamma')$ and $N_{\Gamma}=Z(\Gamma)\Gamma'$. 
	Therefore, if $\phi:kG \rightarrow kH$ is an algebra isomorphism then   $\phi(\augNor{\Cen_G(G')}{G})=\augNor{\Cen_H(H')}{H}$ and hence $\M_i(\Cen_G(G'))/\M_{i+1}(\Cen_G(G'))\cong \M_i(\Cen_H(G'))/\M_i(\Cen_H(H'))$ by \Cref{JenningsNor}. 
	This implies that the  lists of orders of the terms  of the Jennings series of $\Cen_G(G')$ and $\Cen_H(H')$ are equal and, by \Cref{lemmaIso}, these groups have the same exponent.  Finally, observe that  since $\augNor{\Cen_G(G')}{G} /\augNor{G'}{G} \cong  \augNor{\Cen_G(G')/G'}{ G/G' }$, if $\hat \phi:k(G/G')\to k(H/H')$ is the isomorphism induced by $\phi$ then $\hat\phi(\augNor{\Cen_G(G')/G'}{G/G'} )=\augNor{\Cen_H( H')/H'}{H/H'}$. This and the fact that the groups $\Cen_G(G')/G'$ and $\Cen_H(H')/H'$ are both abelian yield, by using the argument in \cite[Lemma~14.2.7(ii)]{Pas77}, that they are isomorphic. 
Writing $p^m=|G'|$, \Cref{lemmaIso} ensures that $\augNor{\Cen_G(G')'}{G}=\augNor{G'}{G}^{p^{m-t}}$ and so $\phi$ induces another isomorphism $\tilde\phi: k(G/\Cen_G(G')')\to k(H/\Cen_H(H')')$, and the same argument yields that $\Cen_G(G')/\Cen_G(G')'\cong \Cen_H(H')/\Cen_H(H')'$. 
\end{proof}

In the remainder of the section we prove \Cref{OTheorem}, \Cref{oo'Theorem}, \Cref{Cor1.0} and \Cref{Cor1.1}.
For that we fix a field $k$ of odd characteristic $p$, a finite $2$-generated  $p$-group $G$ with cyclic commutator subgroup and a group $H$ such that $kG\cong kH$. 
Then, by \Cref{Known}\ref{it:2K}\ref{Generators}, the group $H$ is also $2$-generated with a cyclic commutator subgroup. Moreover, \cref{Known}\ref{it:2K}\ref{it:MIPAbelian} yields that, if one of the two groups is abelian, then $G\cong H$. We assume hence without loss of generality that $G$ and $H$ are non-abelian. 
Now, by \Cref{Known}\ref{it:2K}\ref{AbelDeter}, the first six entries of $\inv(G)$ and $\inv(H)$ coincide. 
Thus we set 
\begin{equation}\label{eq:invy}
\inv (\Gamma)= (p,m ,n _1, n_2,1,1,o_1^\Gamma, o_2^\Gamma , o_1'^\Gamma,o_2'^\Gamma,u_1^\Gamma,u_2^\Gamma) \quad \text{for } \Gamma\in \{G,H\}
\end{equation}
and observe that $m\geq 1$.
To simplify the notation we denote $o^\Gamma=(o_1^{\Gamma},o_2^{\Gamma})$ and $o'^\Gamma=(o_1'^{\Gamma},o_2'^{\Gamma})$. 
We will prove that $o^G=o^H$ and $o'^G=o'^H$ in \cref{lem:o=o} and \cref{lem:o'=o'}, respectively.
  These results will imply  \Cref{oo'Theorem}.  
Combining  \cref{oo'Theorem}   with \Cref{Ooo'}, we  will obtain \Cref{OTheorem} .

For the proofs of \cref{lem:o=o,lem:o'=o'}, we argue by induction on $m$.   The induction base case is covered by \cref{Known}\ref{it:3K}\ref{2GenClass2}, as well as the case where $G$ has nilpotency class $2$. Because of this, we  assume without loss of generality that both $G$ and $H$ are of nilpotency class greater than $2$ and so $m\ge 2$:  \Cref{Characteristic}\ref{LCSpodd} implies that $o^G\ne (0,0)$ and $o^H\ne (0,0)$. 
 Additionally we assume that, if $\tilde{G}$ is a $2$-generated finite $p$-group with cyclic commutator subgroup of cardinality $|\tilde{G}'|<p^m$, then $(o^{\tilde{G}},o'^{\tilde{G}})$ is an algebra invariant of $k \tilde{G}$. 
Denote now
$$\overline \Gamma=\Gamma/\Soc(\Gamma'), \quad \inv(\overline \Gamma) =
(p,m-1,n_1,n_2,1,1,o_1^{\overline \Gamma},o_2^{\overline \Gamma},o_1'^{\overline \Gamma} ,o_2'^{\overline \Gamma},u_1^{\overline \Gamma}, u_2^{\overline \Gamma}), \quad 
o^{\overline \Gamma}=(o_1^{\overline \Gamma},o_2^{\overline \Gamma}), \quad 
o'^{\overline \Gamma}=(o_1'^{\overline \Gamma},o_2'^{\overline \Gamma}).$$
By \cref{Known}\ref{ZGG'Can}, if $\phi:kG \rightarrow kH$ is an isomorphism of algebras, then $\phi(\augNor{G'}{G})=\augNor{H'}{H}$  and, as $\augNor{G'}{G}^{p^{m-1}} = \aug{G'}^{p^{m-1}} kG=\aug{\Soc(G') } kG=\augNor{\Soc(G')}{G}$, it follows that $\phi(\augNor{\Soc(G')}{G})=\augNor{\Soc(H')}{G}$. We derive that
\begin{equation*}
k\overline G\cong \frac{kG}{\augNor{\Soc(G')}{G}}\cong \frac{kH}{\augNor{\Soc(H')}{H}} \cong k\overline H
\end{equation*} 
and the induction hypothesis yields that 
\begin{equation}\label{oo'bar}
o^{\overline G}=o^{\overline H} \qand o'^{\overline G}=o'^{\overline H}.
\end{equation}
As in the previous section, fix $b=(b_1,b_2)\in \B_r(\Gamma)$ with $o'(b)=o'^{\Gamma}$ for $\Gamma\in \{G,H\}$. Thus $b_1, b_2$ and $a=[b_2,b_1]$ have different meanings depending on whether they are considered as elements in $G$ or $H$. The context, however, shall always be clear and any confusion avoided.

\begin{lemma}\label{lem:o=o}
One has $o^G=o^H$.
\end{lemma}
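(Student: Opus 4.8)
The plan is to carry out the induction on $m$ by extracting from the quotient $\overline\Gamma=\Gamma/\Soc(\Gamma')$ everything it can give and then closing the few remaining gaps with new invariants. By \eqref{oo'bar} we have $o^{\overline G}=o^{\overline H}$, while \Cref{ParametersQuotient} records that $o_i^{\overline\Gamma}=\max(0,o_i^\Gamma-1)$. Hence
$$\max(0,o_1^G-1)=\max(0,o_1^H-1)\qand \max(0,o_2^G-1)=\max(0,o_2^H-1),$$
which forces $o_i^G=o_i^H$ as soon as this common value is positive; the only indices where $G$ and $H$ could still differ are those with $o_i^G,o_i^H\in\{0,1\}$. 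To narrow things further I would use that $|\ZZ(\Gamma)\cap\Gamma'|$ is an algebra invariant by \Cref{Known}\ref{it:2K}\ref{Z/ZG'} and equals $p^{m-\max(o_1^\Gamma,o_2^\Gamma)}$ by \Cref{Characteristic}\ref{CenterComm}; therefore $M=\max(o_1^G,o_2^G)=\max(o_1^H,o_2^H)$ is known, and $M\ge 1$ since $o^\Gamma\ne(0,0)$.

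Next I would combine these two facts with the trichotomy of \Cref{Fijando-rLema}, according to which, for each $\Gamma$, the pair $(o_1^\Gamma,o_2^\Gamma)$ is $(0,M)$, $(M,0)$, or $(M,o_2^\Gamma)$ with $0<o_2^\Gamma<M$; in particular $o_1^\Gamma\in\{0,M\}$. A short bookkeeping then shows that any discrepancy between $o^G$ and $o^H$ must take one of exactly two shapes: either $M=1$ and $\{o^G,o^H\}=\{(0,1),(1,0)\}$, or $M\ge 2$ and one of $o^G,o^H$ equals $(M,0)$ while the other equals $(M,1)$. Both shapes force $n_1>n_2$, because the branches of \Cref{Fijando-rLema} producing $(M,0)$ or $(M,1)$ both require $n_2<n_1$; moreover the branch producing $(M,1)$ requires $o_1<o_2+n_1-n_2$, i.e.\ $n_1-n_2\ge M$.

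To eliminate these two shapes I would invoke \Cref{JenningsExpCGG'}, giving $\Cen_G(G')/G'\cong\Cen_H(H')/H'$. Using \Cref{G/L-N}\ref{G/CGG'} one presents $\Cen_\Gamma(\Gamma')/\Gamma'$ as an explicit subgroup of $G/G'\cong C_{p^{n_1}}\times C_{p^{n_2}}$: it is $C_{p^{n_1}}\times C_{p^{n_2-M}}$ when $(o_1,o_2)=(0,M)$, it is $C_{p^{n_1-M}}\times C_{p^{n_2}}$ when $(o_1,o_2)=(M,0)$, and it is generated by $(p^M,0)$ and $(p^{M-1},-1)$ when $(o_1,o_2)=(M,1)$. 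A computation of invariant factors shows that, under the constraint $n_1-n_2\ge M$ attached to the last case, this final subgroup has type $(n_1-M+1,\,n_2-1)$. Hence in the first shape one is comparing abelian groups of type $(n_1,n_2-1)$ and $(n_1-1,n_2)$, and in the second groups of type $(n_1-M,n_2)$ and $(n_1-M+1,n_2-1)$; since $n_1>n_2$ throughout (and $n_1-n_2\ge M$ in the second), each such pair of partitions is distinct, contradicting \Cref{JenningsExpCGG'}. This forces $o^G=o^H$.

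I expect the main obstacle to be precisely this last computation: determining the isomorphism type of $\langle(p^M,0),(p^{M-1},-1)\rangle$ inside $C_{p^{n_1}}\times C_{p^{n_2}}$ and verifying that the inequality $n_1-n_2\ge M$ coming from \Cref{Fijando-rLema} is exactly what separates its type from that of $C_{p^{n_1-M}}\times C_{p^{n_2}}$, the delicate point being the boundary $n_1-n_2=M$, where the relevant types $(n_2,n_2)$ and $(n_2+1,n_2-1)$ only barely differ.
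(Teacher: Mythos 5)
Your proof is correct, and it reaches the contradiction by a genuinely different separating invariant than the paper. The skeleton is the same: both arguments use \eqref{oo'bar} with \Cref{ParametersQuotient}, plus the equality $\max(o_1^G,o_2^G)=\max(o_1^H,o_2^H)$ coming from \Cref{Known}\ref{it:2K}\ref{Z/ZG'} and \Cref{Characteristic}\ref{CenterComm}, to whittle any discrepancy down to exactly your two shapes, which coincide with the paper's cases $\{o^G,o^H\}=\{(0,1),(1,0)\}$ and $\{(o_1,0),(o_1,1)\}$ with $1<o_1\le n_1-n_2$. The divergence is in how these shapes are killed. The paper computes $\ZZ(\Gamma)$ explicitly via \Cref{CenterOdd} (a delicate computation involving the $\delta_i$'s of \eqref{EqDelta1}--\eqref{EqDelta2}) and then shows the exponents of $\ZZ(G)G'/G'$ and $\ZZ(H)H'/H'$ differ, contradicting \Cref{Known}\ref{it:2K}\ref{Z/ZG'} again. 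You instead use that $\Cen_\Gamma(\Gamma')/\Gamma'$ is an algebra invariant by \Cref{JenningsExpCGG'} --- which is legitimate and non-circular, since that theorem is proved before \Cref{lem:o=o} and independently of it --- together with the explicit generators from \Cref{G/L-N}\ref{G/CGG'}. Your invariant-factor computations check out: modulo $\Gamma'$ the three relevant centralizers have types $(n_1,n_2-M)$, $(n_1-M,n_2)$, and, for $\langle(p^M,0),(p^{M-1},-1)\rangle\le \Z/p^{n_1}\times\Z/p^{n_2}$, rewriting the generators as $(p^{M-1},-1)$ and $(0,p)$ and computing the relation lattice gives type $(n_1-M+1,n_2-1)$ precisely because $n_1-M+1>n_2$, which is the constraint $n_1-n_2\ge M$ from \Cref{Fijando-rLema}\ref{osNo0}; and you correctly observe that the excluded configuration $n_1-n_2=M-1$ is exactly where the types $(n_1-M,n_2)$ and $(n_1-M+1,n_2-1)$ would coincide, so the group-theoretic constraint is doing real work. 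What each route buys: the paper's center-based argument needs only exponents and the classical Sandling invariant, at the price of proving \Cref{CenterOdd}; your centralizer-based argument avoids \Cref{CenterOdd} entirely (the description in \Cref{G/L-N}\ref{G/CGG'} is much cheaper) but leans on the heavier Transfer-Lemma machinery behind \Cref{JenningsExpCGG'}, which the paper deploys only later, in \Cref{lem:o'=o'} --- so in the paper's economy your argument front-loads that dependence, while in exchange it unifies the invariant used in both lemmas.
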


\begin{proof}
	By means of contradiction assume $o^G\neq o^H$ and without loss of generality suppose $o^G<_{\lex} o^H$. 
	Recall that $\ZZ(G)\cap G'\cong \ZZ(H)\cap H'$, by \Cref{Known}\ref{it:2K}\ref{Z/ZG'}, so \Cref{Characteristic}\ref{CenterComm} implies $\max\{o_1^G,o_2^G\}=\max\{o_1^H,o_2^H\}$.   Combining this with \Cref{Main}\ref{4} it follows that $n_2<n_1$, for otherwise $o_1^G=o_1^H=0$ and the previous maximum equals $o_2^G=o_2^H$.  
	As a consequence of \Cref{ParametersQuotient} and \eqref{oo'bar} we get that 
	$$\max(0,o_i^G-1)=o_i^{\overline G}=o_i^{\overline H}=\max(0,o_i^H-1).$$
	Then either $\{o_1^G,o_1^H\} =\{0,1\}$ or $\{o_2^G,o_2^H\}= \{0,1\}$, equivalently either $(o_1^G,o_1^H)=(0,1)$ or $o_1^G=o_1^H$ and $(o_2^G,o_2^H)=(0,1)$. 
	Moreover, by  \Cref{Main}\ref{4},  we have that 
	for each $\Gamma\in \{G,H\}$ either
	$o_1^\Gamma=0$, or $o_2^\Gamma=0<o_1^\Gamma$ or $0<o_1^\Gamma-o_2^\Gamma< n_1-n_2$.  
	Hence one of the following conditions holds: 
\begin{enumerate}
 \item $o^G=(0,1)$ and  $o^H=(1,0)$; 
 \item $o^G=(o_1,0)$ and $o^H=(o_1,1)$ with $1<o_1 <1+n_1-n_2$. 
\end{enumerate}	
	We will prove in both cases that $\ZZ(G)G'/ G'$ and $\ZZ(H)H'/H'$ have different exponent which, in view of \Cref{Known}\ref{it:2K} \ref{Z/ZG'}, is not compatible with $kG\cong kH$.
	
(1) First assume $o^G= (0,1)$ and $o^H=(1,0)$. 
 Then, applying    \Cref{CenterOdd}, we have 
$$\ZZ(G)=\GEN{ b_1^{p^m}, b_2^{p^m}, b_1^{p^{m-1}}a} {\qand}  	\ZZ(H)= \GEN{ b_1^{p^m}, b_2^{p^m}, b_2^{   - p^{m-1}} a } .$$
It follows in particular that
\begin{align*}
	\ZZ(G)/\ZZ(G)\cap G'\cong \ZZ(G)G'/G'&=\GEN{b_1^{p^{m-1}}G'}\times \GEN{b_2^{p^m}G'}\cong \Cen_{p^{n_1-m+1}}\times \Cen_{p^{n_2-m}}, \\
	\ZZ(H)/\ZZ(H)\cap H'\cong \ZZ(H)H'/H'&=\GEN{ b_1^{p^m}G'}\times \GEN{b_2p^{m-1}G'}\cong \Cen_{p^{n_1-m}}\times \Cen_{p^{n_2-m+1}},
\end{align*}
and, as $n_2<n_1$, the exponent of $\ZZ(G)/\ZZ(G)\cap G'$ is $p^{n_1-m+1}$ while the exponent of  $\ZZ(H)/\ZZ(H)\cap H'$ is $p^{n_1-m}$. 

(2)  Assume  $o^G= (o_1,0)$ and $o^H=(o_1,1)$ with $1<o_1<1+n_1-n_2$.
It follows from \cref{CenterOdd} that 
$$
	\ZZ(G)=  \GEN{   b_1^{p^m}, b_2^{p^m},  b_2^{-p^{m-o_1}} a}  {\qand}
	\ZZ(H)=  \GEN{ b_1^{p^m}, b_2^{p^m}, b_1^{p^{m-1}} b_2^{-p^{m-o_1}} a  }
$$
and therefore we also have
\begin{align*}
   \ZZ(G)/\ZZ(G)\cap G'\cong\ZZ(G)G'/G'& = \GEN{b_1^{p^m}G' }\times \GEN{b_2^{p^{m-o_1}}G'}\cong \Cen_{p^{n_1-m}}\times \Cen_{p^{n_2-m+o_1}}, \\
 \ZZ(H)/\ZZ(H)\cap H'\cong\ZZ(H)H'/H'&= \GEN{b_1^{  p^{m-1}} b_2^{p^{m-o_1}} G'} \times \GEN{b_2^{p^{m-o_1+1}}G'}\cong  \Cen_{p^{n_1-m+1}} \times \Cen_{p^{n_2-m+o_1-1}}.
\end{align*} 
As $o_1\leq n_1-n_2$, the exponent of $\ZZ(G)/\ZZ(G)\cap G'$ is $n_1-m$, and the exponent of $\ZZ(H)/\ZZ(H)\cap H'$ is $n_1-m+1$. This concludes the proof. 
\end{proof}
%\medskip

In light of \cref{lem:o=o}, until the end of the section we write $o^G=o^H=(o_1,o_2)$.  

\begin{lemma}\label{lem:o'=o'}
One has $o'^G=o'^H$.
\end{lemma}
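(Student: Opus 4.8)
The plan is to prove that $o'^G = o'^H$ by the same inductive strategy already set up for $o^G = o^H$, exploiting the reduction modulo $\Soc(\Gamma')$ together with the precise formulas for the quotient invariants from \Cref{ParametersQuotient}. By means of contradiction, assume $o'^G \neq o'^H$. Since by \eqref{oo'bar} we already know $o'^{\overline G} = o'^{\overline H}$ and by \Cref{ParametersQuotient} each $\overline{o'_i}$ is determined by the ``generic'' rule $\max(0, o'_i - 1)$ except in one exceptional branch, the equality of the barred invariants forces the discrepancy between $o'^G$ and $o'^H$ to be quite rigid: for each $i$, either $o_i'^G = o_i'^H$, or one of the two groups falls into the exceptional branch of the formula for $\overline{o'_i}$ while the other does not.

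First I would record the consequences of \eqref{oo'bar} componentwise. Using that $o^G = o^H = (o_1,o_2)$ (from \cref{lem:o=o}) together with \Cref{ParametersQuotient}, I would show that the generic formula $\overline{o'_i} = \max(0, o_i' - 1)$ holds for both $G$ and $H$ unless the corresponding exceptional condition is met; comparing the two groups then pins down that $\{o_i'^G, o_i'^H\}$ can only differ in a controlled way, analogous to how $\{0,1\}$ appeared in the proof of \cref{lem:o=o}. The exceptional branches in \Cref{ParametersQuotient} are governed by the \emph{boundary} equalities $o'_2 = o'_1 + o_2 + n_1 - n_2$ (when $o_1 = 0$) and $o'_2 = o'_1 + n_1 - n_2 - o_1$ (when $o_2 = 0 < o_1$), so the analysis splits into cases according to the value of $(o_1, o_2)$, which by the standing assumption satisfies $(o_1,o_2)\neq(0,0)$.

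The key step is then to exhibit a genuine algebra invariant that distinguishes the two putative cases, just as the exponent of $\ZZ(\Gamma)\Gamma'/\Gamma'$ was used in \cref{lem:o=o}. The natural candidate here is the exponent of $\Cen_\Gamma(\Gamma')$, which by \Cref{JenningsExpCGG'} is an algebra invariant of $k\Gamma$, combined with the explicit computations of $\exp(\Cen_\Gamma(\Gamma'))$ in \Cref{G/L-N}\ref{ExpCGG'o1=0}-\ref{ExpCGG'o2=0} (giving $p^{n_1+o'_1}$ when $o_1 = 0$ and $p^{\max(n_1+o'_1-o_1, n_2+o'_2)}$ when $o_2 = 0$). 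In the remaining case $o_1 o_2 \neq 0$, \Cref{ParametersQuotient} shows the generic formula always applies, so $o'^{\overline G} = o'^{\overline H}$ directly forces $o'^G = o'^H$ and no contradiction argument is needed; thus the interesting work is confined to the cases where one of the $o_i$ vanishes. In each such case I would compute the relevant invariant (the exponent of $\Cen_\Gamma(\Gamma')$, or, where that does not separate, the isomorphism type of $\Cen_\Gamma(\Gamma')/\Gamma'$ from \Cref{JenningsExpCGG'}, or the exponent of $\ZZ(\Gamma)\Gamma'/\Gamma'$) for both the $G$-branch and the $H$-branch and verify they disagree, contradicting $kG \cong kH$.

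The main obstacle I anticipate is bookkeeping: the exceptional branches in \Cref{ParametersQuotient} are defined by tight boundary equalities among $o_1, o_2, o'_1, o'_2, n_1, n_2$, and I must verify in each subcase that the candidate invariant actually distinguishes $G$ from $H$ rather than accidentally coinciding. In particular, I expect the delicate point to be the case $o_1 = 0$ (respectively $o_2 = 0 < o_1$) where the discrepancy lives in $o'_1$ rather than $o'_2$: there the exponent of $\Cen_\Gamma(\Gamma')$ may not immediately separate the branches, and I would need to fall back on the finer invariant $\Cen_\Gamma(\Gamma')/\Gamma' \cong \Cen_\Gamma(\Gamma')/\Gamma'$ or on a direct computation of the orders of the generators of $\Cen_\Gamma(\Gamma')$ given by \Cref{G/L-N}\ref{G/CGG'}, using the power formula \eqref{Exp} and \Cref{EseProp}. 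Once the separating invariant is identified in every subcase, the contradiction follows exactly as in \cref{lem:o=o} and the proof is complete.
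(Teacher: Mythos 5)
Your overall architecture matches the paper's proof (contradiction, reduction modulo $\Soc(\Gamma')$ via \cref{ParametersQuotient} and \eqref{oo'bar}, final separation by $\exp(\Cen_\Gamma(\Gamma'))$ using \cref{JenningsExpCGG'} and \cref{G/L-N}), but there is a genuine gap at the step where you dispose of the case $o_1o_2\neq 0$. You write that when $o_1o_2\neq 0$ the generic formula $\overline{o_i'}=\max(0,o_i'-1)$ applies to both groups, ``so $o'^{\overline G}=o'^{\overline H}$ directly forces $o'^G=o'^H$ and no contradiction argument is needed.'' This is false: the truncation at zero means $\max(0,o_i'^G-1)=\max(0,o_i'^H-1)$ does \emph{not} determine $o_i'$ when $o_i'\in\{0,1\}$, so a discrepancy $\{o_i'^G,o_i'^H\}=\{0,1\}$ survives the reduction for \emph{every} value of $(o_1,o_2)$, including $o_1o_2\neq 0$. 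The exceptional branches of \cref{ParametersQuotient} are not the only source of ambiguity, and confining ``the interesting work'' to the cases $o_1=0$ or $o_2=0<o_1$ leaves unexcluded configurations such as $o_1o_2\neq 0$ with $o_1'^G=0$, $o_1'^H=1$.

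The paper's proof spends most of its effort precisely on ruling these out, via an invariant your proposal never invokes: the exponent of the \emph{group} itself, $\exp(G)=\exp(H)$ (\cref{Known}\ref{it:2K}\ref{expDet}) combined with the formula $\exp(\Gamma)=p^{\max(n_1+o_1',n_2+o_2')}$ of \cref{Characteristic}\ref{exponente}. This first forces $o_1'^G=o_1'^H$ when the discrepancy sits in $o_2'$, and in the case $o'^G=(0,o_2')$, $o'^H=(1,o_2')$ it forces $n_1<n_2+o_2'$, whence \Cref{Main}\ref{4} yields $o_1=0$ — i.e.\ the conclusion that one of $o_1,o_2$ vanishes (the paper's Claim) is a nontrivial \emph{output} of this argument, not a read-off from \cref{ParametersQuotient}. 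Moreover, in the boundary subcase $o_1'^G=o_2'^G=0$ (with $o_2'^H=1$) neither the group exponent nor $\exp(\Cen_\Gamma(\Gamma'))$ separates (e.g.\ for $o_1=0$ both centralizers have exponent $p^{n_1}$), and the paper resorts to comparing the Jennings quotients $\M_{p^{n_2}}(\Gamma)/\M_{p^{n_2}+1}(\Gamma)$, which give $C_p$ for $G$ but $C_p\times C_p$ for $H$, contradicting \cref{Known}\ref{it:2K}\ref{JenningsDet}. Your hedged fallback list ($\Cen_\Gamma(\Gamma')/\Gamma'$, $\exp(\ZZ(\Gamma)\Gamma'/\Gamma')$) would not obviously succeed there — for instance $\Cen_\Gamma(\Gamma')/\Gamma'\cong C_{p^{n_1}}\times C_{p^{n_2-o_2}}$ is identical for both groups when $o_1=0$ — so without the exponent step and the Jennings computation the case analysis cannot be closed as proposed.
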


\begin{proof} 
By means of contradiction we assume that $o'^G\ne o'^H$ and without loss of generality we also assume that $o'^G<_{\lex}o'^H$. 
In particular $G\not\cong H$ and hence \Cref{Known}\ref{it:3K}\ref{Metacyclic} yields that $G$ and $H$ are not metacyclic. It follows that $\max(o_1'^G,o_2'^G,o_1'^H,o_2'^H)< m$ and, as a consequence of \Cref{ParametersQuotient} and \eqref{oo'bar}, one of the following holds: 
\begin{enumerate}
\item $o_2'^H=o_2'^G+1$ and exactly one of the following holds:
\[
(a)\ o_2'^G=0, \quad  (b)\ o_2=0, \ n_1-n_2<o_1  
\text{ and } o_2'^G=o_1'^G+n_1-n_2-o_1.\]
\item $o_1'^H=o_1'^G+1$ and exactly one of the following holds:
\[
(a)\ o_1'^G=0, \quad (b) \ o_1=0, \ 0<\min(o_1'^G,o_2)\text{ and } o_2'^G=o_1'^G+o_2+n_1-n_2.
\]
\end{enumerate}

\underline{Claim}: We can write $\{i,j\}=\{1,2\}$ with $0=o_i<o_j$, $o_j'=o_j'^G=o_j'^H$ and $o_i'^G+1=o_i'^H$. Moreover, if $i=2$, we have that $n_1-n_2<o_1$.

We prove the claim separately for cases (1) and (2). More precisely, in case (1) we will prove the claim with $i=2$ and $j=1$ and in case (2) we prove the claim with $i=1$ and $j=2$.

We first show that $o_1'^G=o_1'^H$. By \Cref{Known}\ref{it:2K}\ref{expDet} we have $  \exp(G)=\exp(H)$ and so \Cref{Characteristic}\ref{exponente} yields that $$\max(n_1+o_1'^G,n_2+o_2'^G)=\max(n_1+o_1'^H,n_2+o_2'^H)= \max(n_1+o_1'^H,n_2+o_2'^G+1).$$  This implies that such maximum is $n_1+o_1'^G=\max(n_1+o_1'^H,n_2+o_2'^G+1)\geq n_1+o_1'^H$. As $o'^G<_\lex o'^H$, it follows that $o_1'^G=o_1'^H$. 
Thus, if the claim fails, one necessarily has $o_2'^G=0$ and hence $2m-o_1+o_2'^G=2m-o_1>m$. From \Cref{Main}\ref{5} we thus derive  $m\le n_2$. If $o_1'^G>0$ then \Cref{Main}\ref{4} yields $0=o_2<o_1$ and $n_1-n_2-o_1<0$. So, when $o_1'^G>0$, the claim follows. 
Otherwise, that is if $o_1'^G=o_2'^G=0$, we have $p^{n_2}\le p^{n_1}=\exp(G)=\exp(H)=\max(p^{n_1},p^{n_2+1})$ and therefore $n_1\geq n_2+1$.
Hence, using \Cref{Characteristic}\ref{LCSpodd} and the fact that $(p^k-2) (m-\max(o_1,o_2)) +n_2-k \geq n_2 \geq m $ for $k\geq 1$, we have
	\begin{equation*}
	\begin{array}{ccc}
	\M_{p^{n_2}}(G)&=& \GEN{b_1^{p^{n_2}}},  \\
	\M_{p^{n_2 }+1}(G)&=& \GEN{b_1^{p^{n_2+1}}},
	\end{array} \qquad \begin{array}{ccc}
	\M_{p^{n_2}}(H)&= &  \GEN{b_1^{p^{n_2}}, a^{p^{m-1}}}, \\
	\M_{p^{n_2 }+1}(H)&=&\GEN{b_1^{p^{n_2+1}} }. 
	\end{array} 
	\end{equation*}
	We obtain the following contradiction to \cref{Known}\ref{it:2K}\ref{JenningsDet}:
	\begin{equation*}
	\Cen_p \cong  \frac{\M_{p^{n_2}}(G)}{ \M_{p^{n_2 }+1}(G)}\cong \frac{\M_{p^{n_2}}(H)}{ \M_{p^{n_2 }+1}(H)} \cong \Cen_p\times \Cen_p  .
	\end{equation*}  
	
Suppose now that (2) holds. If $o_2'^G\neq o_2'^H$ then, by \Cref{ParametersQuotient},  the hypotheses in (1) are satisfied and from the claim  we obtain the contradiction $o_1'^G=o_1'^H=o_1'^G+1$. Hence we have $o_2'^G=o_2'^H$. Therefore, if the claim fails, we necessarily have $o'^G=(0,o'_2)$ and $o'^H=(1,o'_2)$ and hence $p^{\max(n_1+1,n_2+o_2')}=\exp(H)=\exp(G)=p^{\max(n_1,n_2+o'_2)}$.  Therefore   $n_1+o_1'^G=n_1<n_2+o'_2=n_2+o_2'^G$ and \Cref{Main}\ref{4} yields $o_1=0$. It follows from $o^G\ne (0,0)$ that $o_2\ne 0$. This finishes the proof of the claim.

Combining the claim with \Cref{Main}\ref{4} we deduce that $n_i+o_i'^G\ge n_j+o_j'-o_j$ and hence, applying \ref{ExpCGG'o1=0} and \ref{ExpCGG'o2=0} of \Cref{G/L-N} we have 
\begin{equation}\label{ExpCGH}
\exp(\Cen_{\Gamma}(\Gamma'))=\begin{cases}
p^{n_i+o_i'^G}, & \text{if } \Gamma=G; \\
p^{n_i+o_i'^G+1}, & \text{if } \Gamma=H.
\end{cases}
\end{equation}
This yields a contradiction  to  \Cref{JenningsExpCGG'}. 
\end{proof}

This completes the proof \Cref{oo'Theorem}. 
As a consequence,  we   set  $o'^G=o'^H=(o_1',o_2')$  until the end of this section.  The following is the same as  \Cref{Cor1.0}.

\begin{corollary}\label{cor:Cent} One has
	 $\Cen_G(G')\cong \Cen_H(H')$.
\end{corollary}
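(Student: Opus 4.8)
The plan is to deduce the corollary from \Cref{oo'Theorem} together with the explicit description of $\Cen_G(G')$ obtained in \Cref{G/L-N}. In the running setup of this section, $G$ and $H$ are non-abelian $2$-generated $p$-groups with cyclic commutator subgroups, and by \Cref{lem:o=o} and \Cref{lem:o'=o'} (that is, by \Cref{oo'Theorem}) the vectors $\inv(G)$ and $\inv(H)$ agree in their first ten entries: both equal $(p,m,n_1,n_2,1,1,o_1,o_2,o'_1,o'_2,\ast,\ast)$ and may differ only in the last two coordinates $u_1,u_2$. It therefore suffices to prove the following claim: the isomorphism type of $\Cen_G(G')$ depends only on $(p,m,n_1,n_2,o_1,o_2,o'_1,o'_2)$ and not on $u_1,u_2$. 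Applying the claim to $G$ and to $H$ then yields $\Cen_G(G')\cong \Cen_H(H')$, as desired.

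To prove the claim I would exhibit a presentation of $\Cen_G(G')$ whose data involve only the first eight invariants. Fix $b=(b_1,b_2)\in\B_r$ with $o'(b)=(o'_1,o'_2)$ and put $a=[b_2,b_1]$. By \Cref{G/L-N}\ref{G/CGG'}, the group $\Cen_G(G')$ is generated by $a$ and two elements $x,y$, where $(x,y)=(b_1,b_2^{p^{o_2}})$ if $o_1=0$ and $(x,y)=(b_1^{p^{o_1}},b_1^{p^{o_1-o_2}}b_2^{-1})$ otherwise. Since $\Cen_G(G')$ centralizes $G'=\GEN{a}$, the generator $a$ is central of order $p^m$, and \Cref{G/L-N}\ref{ZGG'}--\ref{CGG''} give $\Cen_G(G')'=\GEN{a^{p^{\max(o_1,o_2)}}}$; as $a$ is central, this forces $\Cen_G(G')'=\GEN{[y,x]}$, whence $[y,x]=a^{\mu_0 p^{\max(o_1,o_2)}}$ for some $\mu_0$ coprime to $p$. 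Likewise, the power formula \eqref{Exp} combined with \eqref{Ordersbi}, the regularity of $G$ and the valuation estimates in \Cref{EseProp}\ref{ValEse},\ref{ValEse2} yields
\[
x^{p^{c_1}}=a^{\mu_1 p^{m-o'_1}},\qquad y^{p^{c_2}}=a^{\mu_2 p^{m-o'_2}},
\]
with $\mu_1,\mu_2$ coprime to $p$ and with $c_1,c_2$ depending only on $(n_1,n_2,o_1,o_2)$. Since $[G:\Cen_G(G')]=p^{\max(o_1,o_2)}$, one has $|\Cen_G(G')|=p^{n_1+n_2+m-\max(o_1,o_2)}$, and a comparison of orders shows that the relations above present $\Cen_G(G')$ (equivalently, $\Cen_G(G')$ is the central product of the $2$-generated group $\GEN{x,y}$ with cyclic commutator and the cyclic group $\GEN{a}$, amalgamated over $\GEN{x,y}'=\GEN{a^{p^{\max(o_1,o_2)}}}$).

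The step I expect to be the main obstacle is twofold. First, in the case $o_1>0$ one must verify, via \eqref{Exp} and \Cref{EseProp}, that the $a$-exponent of $y^{p^{c_2}}$ has valuation \emph{exactly} $m-o'_2$ (so that $\mu_2$ is genuinely a unit); this is the technical heart and rests on the definition of $o'_2$. Granting that $\mu_0,\mu_1,\mu_2$ are units, the normalization is then clean: replacing the generators $a,x,y$ by $a^{v},x^{s},y^{w}$ with $v,s,w$ coprime to $p$ multiplies the three coefficients $(\mu_0,\mu_1,\mu_2)$ by $(v^{-1}sw,\,v^{-1}s,\,v^{-1}w)$ respectively, and the choice $v=\mu_0^{-1}\mu_1\mu_2$, $s=\mu_0^{-1}\mu_2$, $w=\mu_0^{-1}\mu_1$ (a solvable system of units in $\Z/p^m\Z$) makes all three coefficients equal to $1$. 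Here it is essential that $p$ is odd, so that $G$ and hence $\Cen_G(G')$ are regular and the power maps behave multiplicatively under these substitutions. After this change of generators the presentation no longer records $u_1,u_2$ and depends only on $(p,m,n_1,n_2,o_1,o_2,o'_1,o'_2)$; carrying out the order count and the verification separately in the cases $o_1=0$ and $o_1>0$ completes the proof of the claim, and with it the corollary.
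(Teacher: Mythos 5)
Your reduction is the wrong target: the claim that the isomorphism type of $\Cen_G(G')$ depends only on $(p,m,n_1,n_2,o_1,o_2,o'_1,o'_2)$ is false, and the step you yourself flag as ``the technical heart'' is exactly where it breaks. In the case $o_1o_2>0$, take $y=b_1^{p^{o_1-o_2}}b_2^{-1}$; its relevant power is
\[
y^{p^{n_1-o_1+o_2}}=b_1^{p^{n_1}}\,b_2^{-p^{n_1-o_1+o_2}}\,a^{(\textup{corrections})}
=a^{u_1p^{m-o_1'}-u_2p^{m-o_1'+s}+(\textup{corrections})},
\]
where $s=n_1+o_1'-n_2-o_2'-o_1+o_2$. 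When $s\neq 0$ the valuation of the exponent is $m-o_1'+\min(s,0)$ with unit coefficient, and your unit-rescaling normalization (which is exactly what the paper does in these cases, via maps like $(x,y,z)\mapsto(b_1^{u_2},b_2^{u_1p^{o_2}},a^{u_1u_2})$) goes through. But when $s=0$ the two leading terms cancel to order $\ell=v_p(u_1-u_2)$, so your $\mu$ has valuation $\ell$ rather than being a unit, and $\ell$ is \emph{not} determined by the first eight (or ten) invariants: for instance, with $o_2\geq 1$ and $o_1'\geq 1$ the constraints of \Cref{Main}\ref{7} allow both $u_1=u_2=1$ (so $\ell=\infty$) and $u_1=2$, $u_2=1$ (so $\ell=0$) for the same first ten entries of $\inv(G)$, and the resulting centralizers have subgroups $\mho_{n_1-o_1+o_2}(\Cen_G(G'))=\GEN{a^{p^{\min(m-o_1'+\ell,\,m-o_1'+o_2,\,n_1-o_1+o_2)}}}$ of different orders, hence are non-isomorphic. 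So no change of basis or rescaling of generators can eliminate the $u$-dependence in this subcase; your ``claim'' cannot be proved because it is not true.

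The paper closes this gap by importing one additional \emph{algebra} invariant rather than arguing purely from the parameter vector: by \Cref{JenningsExpCGG'}\ref{JenningsCGG'} (which rests on the Transfer Lemma, \Cref{mainlemma}, showing that an isomorphism $kG\to kH$ maps $\augNor{\Cen_G(G')}{G}$ to $\augNor{\Cen_H(H')}{H}$), the orders of the Jennings series terms of the centralizers agree; in particular $p^e=|\M_{p^{n_1-o_1+o_2}}(\Cen_\Gamma(\Gamma'))|$ is the same for $\Gamma=G$ and $\Gamma=H$, and $e$ encodes precisely the quantity $\max\{o_1'-\ell^\Gamma,o_1'-o_2,m-n_1+o_1-o_2,0\}$ that your argument cannot control. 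The paper then writes presentations $\Delta$ depending on $(p,m,n_1,n_2,o_1,o_2,o_1',o_2')$ \emph{and} $e$, with separate constructions in the subcases $s>0$, $s<0$, and $s=0$ (the last further split by the value of $e$), and verifies $\Cen_\Gamma(\Gamma')\cong\Delta$ in each. If you want to salvage your approach, you must replace your claim by: the isomorphism type of $\Cen_G(G')$ is determined by the first ten invariants together with $e$, and then justify that $e$ is an algebra invariant — which requires \Cref{JenningsExpCGG'}, not just \Cref{oo'Theorem}.
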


\begin{proof} 
By   \cref{JenningsExpCGG'}\ref{JenningsCGG'}  we  have $|\M_i(\Cen_G(G'))/\M_{i+1}(\Cen_G(G'))|=|\M_i(\Cen_G(H'))/\M_{i+1}( \Cen_H(H'))| $ for each $i\geq 1$, hence also $|\M_i(\Cen_G(G'))|=|\M_i(\Cen_H(H'))|$ for each $i\geq 1$.    In order to prove the lemma we  will  write a presentation for a group $\Delta$ depending only on $ p,m,n_1,n_2,o_1,o_2,o_1',o_2' $ and  $e$, where $p^e=|\M_{p^{n_1-o_1+o_2}}(\Cen_\Gamma(\Gamma'))|$,  and show that $\Cen_\Gamma(\Gamma') \cong \Delta$ for $\Gamma\in\{G,H\}$.
We rely on the description of $C_\Gamma(\Gamma')$ given in \Cref{G/L-N}\ref{G/CGG'}  and analyze the different cases listed in \cref{Main}\ref{4}.
	
Assume first  that  $o_1=0$. Then $ \Cen_\Gamma(\Gamma')=	\GEN{ b_1,b_2^{p^{o_2}},a} $, and by \eqref{Commb1} and \cref{EseProp}\ref{EseCero}  we have $[b_2^{u_1^{\Gamma}p^{o_2}},b_1]=a^{u_1^{\Gamma}p^{o_2}}$. Defining 
  $$\Delta=\GEN{x,y,z\mid [y,x]=z^{p^{o_2}}, [z,x]=[z,y] =1, x^{{p^{n_1}}}=z^{p^{m-o_1'}}, y^{p^{n_2-o_2}}=z^{p^{m-o_2'}},z^{p^m}=1 },$$
the assignment $(x,y,z)\mapsto (b_1^{u_2^\Gamma},b_2^{   u_1^\Gamma p^{o_2}},a^{u_1^\Gamma u_2^\Gamma})$ extend to an isomorphism $\Delta \to \Cen_\Gamma(\Gamma')$.
%$f_\Gamma: \Delta \to \Cen_\Gamma(\Gamma') $ given by $$ x\mapsto b_1^{u_2^\Gamma}, \quad y\mapsto    b_2^{   u_1^\Gamma p^{o_2}} , \qand z\mapsto a^{u_1^\Gamma u_2^\Gamma} . $$

Next assume that  $o_2=0<o_1$, so that 
		$ \Cen_\Gamma(\Gamma')=	\GEN{ b_1^{p^{o_1}},b_2,a}.$ Set 
	$$\Delta=\GEN{x,y,z\mid [y,x]=z^{p^{o_1}}, [z,x]=[z,y] =1, x^{{p^{n_1-o_1}}}=z^{p^{m-o_1'}}, y^{p^{n_2 }}=z^{p^{m-o_2'}},z^{p^m}=1 }.$$
	Then the assignment $(x,y,z)\mapsto (b_1^{u_2^\Gamma p^{o_1}}, b_2^{   u_1^\Gamma }, a^{u_1^\Gamma u_2^\Gamma})$ induces an isomorphism $\Delta \to \Cen_\Gamma(\Gamma')$.
	%	there are  isomorphisms $f_\Gamma: \Delta \to \Cen_\Gamma(\Gamma') $ given by $$ x\mapsto b_1^{u_2^\Gamma p^{o_1}} , \quad y\mapsto  b_2^{   u_1^\Gamma } , \qand z\mapsto a^{u_1^\Gamma u_2^\Gamma} .   $$
 
 Finally assume that $o_1o_2>0$.  Then $o_2<o_1<o_2+n_1-n_2$ and $o'_1\le o'_2\le o'_1+n_1-n_2$,  and we also have 
 	$$\Cen_\Gamma(\Gamma')=	\GEN{ b_1^{p^{o_1}}, b_1^{p^{o_1-o_2}} b_2^{-1},a}=	\GEN{  b_1^{p^{o_1-o_2}} b_2^{-1},b_2^{p^{o_2}},a}.$$  	
 Let $r'$ be an integer such that $r_2r'\equiv 1 \mod p^m$. 
By the definition of $r_2$ in \eqref{Erres} we have $r'\equiv 1 \mod p^{m-o_2}$ and, as $o_1>o_2$,   we also have  $r'\equiv 1\mod p^{m-o_1}$. 
Applying \eqref{Commb2} we  get  
$[b_2,b_1^{p^{o_1-o_2}}b_2^{-1}]=a^{r'\Ese{r_1}{p^{o_1-o_2}}}$  and, as a consequence, that  $[b_2^{p^{o_2}},b_1^{p^{o_1-o_2}}b_2^{-1}]=
a^{r'\Ese{r_1}{p^{o_1-o_2}}{\Ese{r_2}{p^{o_2}}}}$. 
 From \Cref{EseProp}\ref{ValEse}-\ref{EseCero} and $r'\equiv 1 \mod p^{m-o_1}$   we derive
$r'\Ese{r_1}{p^{o_1-o_2}}{\Ese{r_2}{p^{o_2}}}\equiv r'p^{o_1} \equiv p^{o_1} \mod p^m$, from which it follows that $[b_2^{p^{o_2}},b_1^{p^{o_1-o_2}}b_2^{-1}]=a^{p^{o_1}}$.
As $ C_G(G') $ is of class at most $2$,  we have moreover that  
\begin{equation}\label{Commutator}
[b_2^{dp^{o_2}},(b_1^{p^{o_1-o_2}}b_2^{-1})^e]=a^{p^{o_1}de}, \quad \text{for every } d,e\in \Z.
\end{equation}
We   construct different $\Delta$'s depending on whether  $s=n_1+o'_1-n_2-o'_2-o_1+o_2$ is positive, negative or zero.  

 Suppose  $s>0$  and take $\alpha^\Gamma=u_1^\Gamma-u_2^\Gamma p^s$ and
	$$\Delta=\GEN{x,y,z\mid [y,x]=z^{p^{o_1}}, [z,x]=[z,y] =1, x^{{p^{n_1-o_1+o_2}}}=z^{p^{m-o_1'}}, y^{p^{n_2-o_2}}=z^{p^{m-o_2'}},z^{p^m}=1 }.$$ 
 Define additionally  
  We claim that the homomorphism  $f_\Gamma:\Delta\to \Cen_\Gamma(\Gamma' )$ that is defined by
  $$ x\mapsto x_1:=(b_1^{p^{o_1-o_2}}b_2^{-1})^{u_2^\Gamma} , \quad 
  y\mapsto y_1:= b_2^{\alpha^\Gamma p^{o_2}}  , \quad z\mapsto z_1:=a^{\alpha^\Gamma u_2^\Gamma}$$
 is in fact an isomorphism.
 Indeed, both $a$ and $z$ have order $p^m$ and $\Cen_\Gamma(\Gamma')/\GEN{a}\cong \Delta/\GEN{z}\cong C_{p^{n_1-o_1+o_2}}\times C_{p^{n_2-o_2}}$,  so    to prove that $f_{\Gamma}$ is an isomorphism  we  check that $x_1$, $y_1$ and $z_1$ satisfy the relations of $\Delta$. 
  It is clear that  $[z_1,x_1]=[z_1,y_1]=1$   and  regularity grants that  $x_1^{{p^{n_1-o_1+o_2}}}=z_1^{p^{m-o_1'}}$ and $y_1^{p^{n_2-o_2}}=z_1^{p^{m-o_2'}}$.
The  last  relation follows from \eqref{Commutator}. 

In case $s<0$ we take $\alpha^\Gamma=u_1^\Gamma p^s-  u_2^\Gamma $,
$$\Delta=\GEN{x,y,z\mid [y,x]=z^{p^{o_1}}, [z,x]=[z,y] =1, x^{{p^{n_1-o_1+o_2}}}=z^{p^{m-o_1'+s}}, y^{p^{n_2-o_2}}=z^{p^{m-o_2'}},z^{p^m}=1 },$$ 
and $f_\Gamma$ defined as in the previous paragraph. The same argument shows that $f_\Gamma$ is a group isomorphism. 

Finally suppose that $s=0$ and factorize $u_1^\Gamma-u_2^\Gamma=w^\Gamma p^{\ell^\Gamma}$ with $w^{\Gamma}$ coprime to $p$. Observe that $(w^G,\ell^G)$ and $(w^H,\ell^H)$ might be different,  nonetheless the following hold:
$$p^e = |\M_{p^{n_1-o_1+o_2}}(\Cen_\Gamma(\Gamma'))|=
| \mho_{n_1-o_1+o_2}( \Cen_\Gamma(\Gamma') )  |= 
\left|\GEN{a^{ {p^{\min\{m-o_1'+\ell^\Gamma, m-o_1'+o_2, n_1-o_1+o_2 \}}  }}}\right|.$$	
  As a result, $e=\max\{ o'_1-\ell^\Gamma,o'_1-o_2,m-n_1+o_1-o_2,0\}$ is independent of   $\Gamma\in \{G,H\}$.  We now give contructions of $\Delta$ depending on the value of $e$.

Assume that $e=\max\{o_1'-o_2,m-n_1+o_1-o_2,0\} $. We set
$$\Delta=\GEN{x,y,z\mid [y,x]=z^{p^{o_1}}, [z,x]=[z,y] =1, x^{{p^{n_1-o_1+o_2}}}=1, y^{p^{n_2-o_2}}=z^{p^{m-o_2'}},z^{p^m}=1 }$$
and select an integer $v$ such that $vu_2^{\Gamma}\equiv 1 \mod p^m$. 
Then we obtain an isomorphism $\Delta\rightarrow \Cen_\Gamma(\Gamma')$ by  assigning  $y\mapsto   b_2^{ u_1^\Gamma  p^{o_2}}$, $z\mapsto a^{u_1^\Gamma u_2^\Gamma}$ and
$$x\mapsto \begin{cases} 
	( b_1^{p^{o_1-o_2}}b_2^{-1})^{u_2^\Gamma}, & \text{if } e=0; \\
	( b_1^{p^{o_1-o_2}}b_2^{-1}  a^{-w^\Gamma p^{m-o_1'  -n_1+\ell^\Gamma +o_1-o_2}})^{u_2^\Gamma}, & \text{if } e=m-n_1+o_1-o_2; \\
	( b_1^{p^{o_1-o_2}}b_2^{-1+(u_2^\Gamma-u_1^\Gamma)v})^{u_2^\Gamma}, & \text{if } e=o'_1-o_2.
\end{cases}$$

Otherwise assume
 $e=o'_1-\ell^\Gamma>\max\{o_1'-o_2, m-n_1+o_1-o_2,0\}$ and we take
	$$\Delta= \GEN{x,y,z\mid [y,x]=z^{p^{o_1}}, [z,x]=[z,y] =1, x^{{p^{n_1-o_1+o_2}}}=z^{p^{m-e}}, y^{p^{n_2-o_2}}=z^{p^{m-o_2'}},z^{p^m}=1 }.$$
  Slightly modifying the arguments from the previous paragraph, one easily shows that 
$$ x\mapsto ( b_1^{p^{o_1-o_2}}b_2^{-1})^{u_2^\Gamma}  , \quad y\mapsto   b_2^{   w^\Gamma p^{ o_2 }}  , \qand z\mapsto a^{ w^\Gamma u_2^\Gamma  }$$
determines an isomorphism $\Delta\to \Cen_\Gamma(\Gamma')$.
\end{proof}

  The following result is the same as   \Cref{Cor1.1}.
	
	\begin{corollary}\label{cor:type}
The groups	$G$ and $H$ have the same type invariants.
	\end{corollary}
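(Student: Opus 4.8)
The plan is to deduce this directly from \Cref{oo'Theorem}. Assume $kG\cong kH$. By \Cref{oo'Theorem} the group $H$ is again $2$-generated with cyclic commutator subgroup and the first ten entries of $\inv(G)$ and $\inv(H)$ coincide; in particular $G$ and $H$ share $(p,m,n_1,n_2,o_1,o_2,o'_1,o'_2)$. Since $p$ is odd and $G'$ is cyclic, $G$ (and likewise $H$) is regular, so its type invariants are well defined and are exactly the conjugate partition of the non-increasing sequence $(\omega_n)_n$ determined by $p^{\omega_n}=|\mho_{n-1}(G)/\mho_n(G)|$. Hence it suffices to show that each order $|\mho_n(G)|$ is a function of $(p,m,n_1,n_2,o'_1,o'_2)$ alone, with no dependence on $u_1,u_2$: applying the identical formula to $H$ then yields equal $\mho$-orders, equal $\omega_n$, and therefore equal type invariants.

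To compute $|\mho_n(G)|$ I fix a basis $b=(b_1,b_2)\in\B_r$ with $o'(b)=o'^G$ and set $a=[b_2,b_1]$ as in \eqref{Ordersbi}. Because $\mho_n$ commutes with the projection $G\to G/G'$, one has $|\mho_n(G)|=|\mho_n(G/G')|\cdot|\mho_n(G)\cap G'|$, and $|\mho_n(G/G')|=p^{\max(0,n_1-n)+\max(0,n_2-n)}$ since $G/G'\cong C_{p^{n_1}}\times C_{p^{n_2}}$. For the second factor, regularity guarantees that every element of $\mho_n(G)\cap G'$ is a single power $g^{p^n}$ lying in $\langle a\rangle$; writing it as $a^w$ and using the power formula \eqref{Exp} together with \Cref{EseProp}, the claim I would establish is
\[
\mho_n(G)\cap G'=\GEN{a^{p^{c_n}}},\qquad c_n=\min\bigl(n,\ m-o'_1+\max(0,n-n_1),\ m-o'_2+\max(0,n-n_2)\bigr),
\]
so that $|\mho_n(G)\cap G'|=p^{\max(0,m-c_n)}$. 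The three quantities are precisely the $p$-adic valuations of the exponents produced by the \emph{pure} powers $a$, $b_1^{p^{n_1}}=a^{u_1p^{m-o'_1}}$ and $b_2^{p^{n_2}}=a^{u_2p^{m-o'_2}}$; the decisive point is that $p\nmid u_i$ forces these valuations to be $m-o'_i$ \emph{regardless of the value of $u_i$}.

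The inequality $c_n\le\min(\cdots)$ is immediate by evaluating $g^{p^n}$ on the isolated elements $a$, $b_1^{p^{\max(0,n_1-n)}}$ and $b_2^{p^{\max(0,n_2-n)}}$, each of which produces a $w$ of the stated valuation with no cross terms. The reverse inequality is the crux and the main obstacle: for an arbitrary $g=b_1^xb_2^ya^z$ with $g^{p^n}\in G'$ one forces $p^{n_1}\mid xp^n$ and $p^{n_2}\mid yp^n$, and then \eqref{Exp} writes $w$ as a sum whose $b_1$-, $b_2$-, cross- and $a^z$-parts have valuations bounded below by $m-o'_1+\max(0,n-n_1)$, by $m-o'_2+\max(0,n-n_2)$, by $n$ (from $\Te{r_1^x,r_2^y}{p^n}\equiv 0\bmod p^n$), and by $n$ (from $v_p(\Ese{r_1^xr_2^y}{p^n})=n$), respectively. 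Since $v_p(A+B)\ge\min(v_p(A),v_p(B))$, cancellation can only \emph{raise} the valuation, giving $v_p(w)\ge c_n$. The careful part is exactly this bookkeeping in \eqref{Exp}: one must check that no combination of the $u_i$-dependent contributions can conspire, through cancellation, to lower the minimal valuation below $c_n$, and this is where the estimate $v_p(A+B)\ge\min$ and \Cref{EseProp} do all the work. With the formula for $c_n$ in hand, $|\mho_n(G)|=p^{\max(0,n_1-n)+\max(0,n_2-n)+\max(0,m-c_n)}$ depends only on $(p,m,n_1,n_2,o'_1,o'_2)$, hence equals $|\mho_n(H)|$ for every $n$, and the type invariants of $G$ and $H$ agree.
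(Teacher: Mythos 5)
Your proposal is correct, but it takes a genuinely different route from the paper's. The paper invokes the discussion after \cite[Theorem 3.1]{Song2013} to get $\omega(\Gamma)\in\{2,3\}$, disposes of the metacyclic case ($\omega=2$) via the known solution of the Modular Isomorphism Problem for metacyclic groups, and then pins down $(e_1,e_2,e_3)$ through three structural identities: $e_1$ from $\exp(\Gamma)$, $e_1+e_2+e_3$ from $|\Gamma|$, and $e_3$ as the minimal order of an element of $\Phi(\Gamma)\setminus\mho_1(\Gamma)$, this last computation requiring a somewhat delicate congruence analysis in which the $u_i^\Gamma$ appear but drop out of the final minimum $N=m-\max(o_1',o_2')$. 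You instead compute every order $|\mho_n(G)|$ in closed form, which determines the type invariants by their very definition. Your argument checks out: the factorization $|\mho_n(G)|=|\mho_n(G/G')|\cdot|\mho_n(G)\cap G'|$ holds because the projection maps $\mho_n(G)$ onto $\mho_n(G/G')$; regularity makes every element of $\mho_n(G)$ a single $p^n$-th power; and for $g=b_1^xb_2^ya^z$ with $g^{p^n}\in G'$ one has $x=x'p^{\max(0,n_1-n)}$, $y=y'p^{\max(0,n_2-n)}$, so \eqref{Exp} together with $b_i^{p^{n_i}}=a^{u_ip^{m-o_i'}}$ and \Cref{EseProp} gives $v_p(w)\ge c_n$ term by term, while the pure powers $a^{p^n}$, $b_1^{p^{\max(n,n_1)}}$ and $b_2^{p^{\max(n,n_2)}}$ attain the three valuations exactly because $p\nmid u_i$ — so $\mho_n(G)\cap G'=\GEN{a^{p^{c_n}}}$ as you claim. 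Note that your worry about the $u_i$-contributions ``conspiring through cancellation'' is vacuous: the ultrametric inequality never pushes a valuation below the minimum, and attainment is supplied by the isolated generators, so the crux is easier than you feared. What your approach buys: it is more self-contained (no appeal to \cite{Song2013} nor to the metacyclic case of the MIP — your formula for $c_n$ is uniform and covers $\max(o_1',o_2')=m$ automatically) and it yields strictly more, namely the whole sequence $(|\mho_n(G)|)_n$, with the independence from $(o_1,o_2,u_1,u_2)$ visible at a glance. What the paper's approach buys: explicit closed formulas for $(e_1,e_2,e_3)$ themselves, and a shorter proof granted the cited external results.
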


\begin{proof}
  We will  express the type invariants  solely  in terms of $(p,m,n_1,n_2,o_1,o_2,o_1',o_2')$.
The corollary then  will follow  from \Cref{oo'Theorem}.
  The discussion after \cite[Theorem 3.1]{Song2013} guarantees, for $\Gamma\in\{G,H\}$, that $\omega(\Gamma) \in \{2,3\}$ and, moreover, $\omega(\Gamma)=2$ if and only if $\Gamma$ is metacyclic, that is if $\max\{o_1',o_2'\}=m$.    In view of  \Cref{Known}\ref{it:3K}\ref{Metacyclic} we  assume  without loss of generality  that $\omega(\Gamma)=3$. In this situation, as  indicated in   \cite{Song2013}, the type invariants satisfy the following properties:
	\begin{equation}\label{PropertiesTypeInvariants}
	\exp(\Gamma)=p^{e_1^\Gamma}, \quad |\Gamma|=p^{e_1^\Gamma+e_2^\Gamma+e_3^\Gamma} \qand p^{e_3^\Gamma}= \min \{|g| : g\in \Phi(\Gamma)\setminus \mho_{1}(\Gamma)\}.
 	\end{equation}
	where  $\Phi(\Gamma)=\GEN{b_1^p,b_2^p,a}$ is the Frattini subgroup of $\Gamma$ and $e_i^\Gamma$ denotes the $i$-th type invariant of $\Gamma$.
	  We claim that  the type invariants   are given  by the following formulae:
	\begin{eqnarray} \label{ClaimTypeInvariants}
	\begin{split}
	e_1^\Gamma&= \max(n_1+o_1', n_2+o_2'), \\
	e_2^\Gamma&= n_1+n_2+ \max(o_1',o_2') - \max(n_1+o_1', n_2+o_2'), \\
	e_3^\Gamma&=m- \max(o_1',o_2').
	\end{split}
	\end{eqnarray} 
	Indeed, by \Cref{Characteristic}\ref{exponente} the exponent of $G$ is $p^{\max(n_1+o'_1,n_2+o'_2)}$, so \eqref{PropertiesTypeInvariants}  yields the first equality.
		Moreover the order of $G$ is both $p^{e_1^\Gamma+e_2^\Gamma+e_3^\Gamma}$ and $p^{m+n_1+n_2}$.
		Therefore it is enough to prove that $e_3^\Gamma=m-\max(o_1',o'_2)$. 
	  For this, let  $g$ in $\Phi(\Gamma)\setminus \mho_{1}(\Gamma)$. Then   $g=b_1^{xp^{s_1}} b_2^{y p^{s_2}} a^z$, with $1\leq \min\{s_1,s_2\}$ and $p$ does not divide $ zxy$.  Conversely, every element of such form belongs to $\Phi(\Gamma)\setminus \mho_{1}(\Gamma)$. Thanks to \eqref{Exp},  if $p^N\geq |g|$ then 
\begin{equation}\label{eq:invinv}
1=(b_1^{xp^{s_1}} b_2^{y p^{s_2}} a^z)^{p^N}= b_1^{x p^{s_1+N}} b_2^{y  p^{s_2+N}} a^{\Ese{r_1}{xp^{s_1}} \Ese{r_2}{y p^{s_2}}  \Te{r_1^{x p^{s_1}}, r_2^{y p^{s_2}}}{p^N}  +z\Ese{r_1^{xp^{s_1}} r_2^{y p^{s_2}}   }{p^N}}.
\end{equation}		
		Then $s_i+N\geq n_i $ for $i=1,2$  and, as $n_1\geq m$, \Cref{EseProp}\ref{ValEse}-\ref{ValEse2}  yields 
		$$v_p\left(\Ese{r_1}{xp^{s_1}} \Te{r_1^{x p^{s_1}}, r_2^{y p^{s_2}}}{p^N}\right)\ge  s_1+N\geq n_1\geq  m  \qand
		v_p\left(z\Ese{r_1^{xp^{s_1}} r_2^{y p^{s_2}}   }{p^N}\right)=N.$$
		So $\Ese{r_1^{xp^{s_1}} r_2^{y p^{s_2}}   }{p^N}= A  p^N$, for some integer $A=A(x,y,s_1,s_2)$, with $p$ not dividing $A$.
		Hence   \eqref{eq:invinv} can be rewritten as 
		$$1= a^{x u_1^\Gamma p^{s_1+N -n_1 + m-o_1'}  + y u_2 ^\Gamma p ^{s_2+N -n_2+m-o_2'}  +    zA p^N}$$
		and $e_3^\Gamma$ is  the minimum value of $N$ such that there are integers $x,y, z,s_1$ and $s_2$ satisfying 
		\begin{equation}\label{Congruence1}
		\begin{split}
		p\nmid xyz, \quad 1\le s_i \quad  s_i+N\ge n_i, \qand & \\ 
		x u_1^\Gamma p^{s_1+N -n_1 + m-o_1'}  + y u_2^\Gamma p ^{s_2+N -n_2+m-o_2'} + zA p^N & \equiv 0 \mod p^m.
		\end{split}
		\end{equation}  
		If $N\geq m$ then the conditions hold trivially taking $s_1$ and $s_2$ large enough, so we look for values $N<m$. 
		Then $zA p^N\not\equiv 0 \mod p^m$ and hence $s_1$ and $s_2$ should be taken satisfying one of the following conditions:
		$$\text{(1) } s_1=n_1+o_1'-m, \quad 
		\text{(2) } s_2=n_2+o_2'-m, \quad 
		\text{(3) }   m<n_1+o_1'-s_1=n_2+o_2'-s_2  .$$
		In the three cases $n_i\le s_i+N\le n_i + o_i'-m+N$ and hence $N\ge m-o'_i$. 
		This shows that $$N\ge \min(m-o'_1,m-o'_2)=m-\max(o'_1,o'_2)$$ and hence,   in view of  the above we   assume that $\max(o'_1,o'_2)>0$. It now suffices to prove that, for $N=m-\max(o'_1,o'_2)$, there exists integers $x,y,z,s_1$ and $s_2$ satisfying \eqref{Congruence1}. From $N<m$ we conclude that $\max(o'_1,o'_2)>0$. 
  We note, moreover  that $n_i+o'_i\ge m$. Indeed,    if this weren't the case, we would have $n_i<m$ and, applying \Cref{Main}\ref{2}-\ref{5}, that   $i=2$   and $n_2=2m-o_1-o'_2> m-o'_2$. 
		If $o'_1\ge o'_2$ then the following integers satisfy \eqref{Congruence1}: 
$$N=m-o_1',\quad s_1=n_1+o'_1-m=n_i-N>0, \quad s_2=n_2+o'_2\ge m,\quad x=-1,\quad y=1,\quad z=Bu_1$$		
%		$N=m-o_1'$, $s_1=n_1+o'_1-m=n_i-N>0$, $s_2=n_2+o'_2\ge m$, $x=-1$, $y=1$ and $z=Bu_1$ 
		with $B$ an integer such that $AB   \equiv 1 \mod p^m$.
		Otherwise $o'_2>o'_1$ and a symmetric argument shows that for $N=m-o_2'$ the integers  
		$$x=1, \quad y=-1, \quad s_1=n_1+o_1'\ge m, \quad s_2=n_2-m+o_2'=n_2-N>0, \quad z=Bu_2$$ 
		satisfy \eqref{Congruence1}. This finishes the proof of \eqref{ClaimTypeInvariants} and hence also of the corollary.
\end{proof}

 \bibliographystyle{amsalpha}
 \bibliography{MIP}
 
 	\bigskip \medskip
	
	\noindent
	\footnotesize 
	{\bf Authors' addresses:}
	
	\smallskip
		
	\noindent Diego Garc\'ia-Lucas, Departamento de Matem\'aticas, Universidad de Murcia
	\hfill {\tt  diego.garcial@um.es}
		
	\noindent \'Angel del R\'io, Departamento de Matem\'aticas, Universidad de Murcia
	\hfill {\tt adelrio@um.es}
	
	\noindent Mima Stanojkovski,  RWTH Aachen and
	MPI-MiS Leipzig
	\hfill {\tt mima.stanojkovski@rwth-aachen.de}

\end{document}